\newtheorem{iThm}{Theorem}
\newtheorem{thm}{Theorem}[section]
\newtheorem{cor}[thm]{Corollary}
\newtheorem{lem}[thm]{Lemma}
\newtheorem{prop}[thm]{Proposition}
\theoremstyle{definition}
\newtheorem{defn}[thm]{Definition}
\newtheorem{q}[thm]{Question}
\theoremstyle{remark}
\newtheorem{claim}{Claim}[thm]
\numberwithin{equation}{section}
\renewcommand{\phi}{\varphi}
\newcommand{\norm}[1]{\left\Vert#1\right\Vert}
\newcommand{\abs}[1]{\left\vert#1\right\vert}
\newcommand{\defined}[1]{\emph{#1}}
\newcommand{\mc}[1]{\mathcal{#1}}
\newcommand{\mbb}[1]{\mathbb{#1}}
\newcommand{\op}[1]{\operatorname{#1}}
\newcommand{\tp}{\op{tp}}
\newcommand{\qftp}{\op{qftp}}
\newcommand{\ignore}[1]{}
\newcommand{\ce}{\mathbb C}
\newcommand{\en}{\mathbb N}
\newcommand{\cV}{\mathcal V} 
\newcommand{\cU}{\mathcal U} 
\newcommand{\bbF}{\mbb F} 
\newcommand{\bbC}{\mbb C} 
\newcommand{\bbN}{\mbb N} 
\newcommand{\cO}{\mathcal O}
\DeclareMathOperator{\dist}{dist}
\newcommand{\e}{\epsilon} 
\newcommand{\allexist}{$\forall\exists$}
\DeclareMathOperator{\eq}{eq}
\newcommand{\cstar}{$\mathrm{C}^*$}
\newcommand{\cst}{\mathrm{C}^*}
\DeclareMathOperator{\Clop}{CL} 
\newcommand{\CCantor}{C\left(2^{\bbN}\right)}
\begin{document}

\title{Quantifier elimination in   \cstar-algebras}%
\author[C. J. Eagle]{Christopher J. Eagle}
\address[C. J. Eagle]{Department of Mathematics and Statistics\\ 
University of Victoria \\
PO BOX 1700 STN CSC \\
Victoria, British Columbia, Canada\\ 
V8W 2Y2}
\email{eaglec@uvic.ca}
\urladdr{http://www.math.uvic.ca/~eaglec/}

\author[I. Farah]{Ilijas Farah}
\address[I. Farah]{Department of Mathematics and Statistics\\
York University\\
4700 Keele Street\\
Toronto, Ontario\\ Canada, M3J
1P3}

\email{ifarah@mathstat.yorku.ca}
\urladdr{http://www.math.yorku.ca/$\sim$ifarah}

\author[E. Kirchberg]{Eberhard Kirchberg}
\address[E. Kirchberg]{Humboldt Universit\"at zu Berlin \\ Institut f\"ur Mathematik \\ Unter den Linden 6 \\ D-10099 Berlin, Germany}
\email{kirchbrg@mathematik.hu-berlin.de}

\author[A. Vignati]{Alessandro Vignati}
\address[A. Vignati]{Department of Mathematics and Statistics\\
York University\\
4700 Keele Street\\
Toronto, Ontario\\ Canada, M3J
1P3\\
}
\email{ale.vignati@gmail.com}
\urladdr{http://www.automorph.net/avignati}

\subjclass[2010]{%
46L05, 
03C10, 
46M07 
}
\keywords{\cstar-algebras, quantifier elimination, model completeness, logic of metric structures}
\thanks{IF was partially supported by NSERC. AV is supported by a Susan Mann Scholarship}

\date{\today}%
\begin{abstract}
The only unital
 \cstar-algebras that admit elimination of quantifiers in continuous logic in the language of unital \cstar-algebras
 are  $\mathbb{C}, \mathbb{C}^2$, $C($Cantor space$)$ 
 and  $M_2(\mathbb{C})$.  We also prove that the theory of \cstar-algebras does not have model companion and 
 show that the theory of $M_n(\cO_{n+1})$ is not  \allexist-axiomatizable for any $n\geq 2$.  
  \end{abstract}
 
\maketitle
\section*{Introduction}

One of the key steps in using model theory in applications is to understand the \emph{definable} objects in models of a particular theory.  It is often the case that the objects which can be defined without the use of quantifiers have particularly natural descriptions, while definitions involving quantifiers are more difficult to analyze.  Quantifier elimination, which is the property that every definable object can be defined without using quantifiers, is therefore a highly desirable feature for a theory to possess.

Quantifier elimination is a matter of the formal language used to study the structures of interest.  It is easy to see that any theory can be extended to a theory with quantifier elimination in an expanded language by simply adding a new symbol for every object definable in the original one.  While such an expansion yields quantifier elimination, it does so without simplifying the task of determining which objects are definable.  The usefulness of quantifier elimination results therefore depends on using a natural language for the structures at hand, so that it is possible to give a useful description of the objects that can be defined in quantifier-free way.  For this reason we consider \cstar-algebras as structures in the language for \cstar-algebras introduced in \cite{Farah2014a}.  This standard language for \cstar-algebras contains symbols for the natural operations in a \cstar-algebra; when we consider unital algebras we often add a symbol for the multiplicative identity to form the language of unital \cstar-algebras.  These languages are sufficiently expressive that many natural classes of \cstar-algebras are either axiomatizable, or at least defined by the omission of certain types (many examples of this kind are given in \cite[Theorem~2.5.1 and Theorem~5.7.3]{Muenster}).  Nevertheless, these languages are also sufficiently limited that quantifier-free formulas are quite simple, being continuous combinations of norms of $^*$-polynomials with complex coefficients. 
We identify $2$ with $\{0,1\}$ and the Cantor space with the product space $2^{\bbN}$.

\begin{iThm}
The theories of unital \cstar-algebras that admit quantifier elimination (in the language of unital \cstar-algebras) are exactly the complete theories of $\bbC$, $\bbC^2$,  $M_2(\bbC)$ and $C(2^\bbN)$.  
The theories of $\bbC$ and $C_0(2^{\bbN}\setminus \{0\})$ admit quantifier elimination  in the language of \cstar-algebras without a symbol for a unit, and 
no theory of a noncommutative \cstar-algebra  admits quantifier elimination in this language.  
\end{iThm}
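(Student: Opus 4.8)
I would prove the theorem in two halves — sufficiency (the four listed algebras eliminate quantifiers) and necessity (no others do). For necessity the tool is the standard test for quantifier elimination in continuous logic (\cite{Muenster}): a theory $T$ eliminates quantifiers precisely when the quantifier-free type of a tuple in a model of $T$ determines its type, equivalently when, for $\mathcal M,\mathcal N\models T$ with $\mathcal N$ sufficiently saturated and $C$ a finitely generated substructure of $\mathcal M$, every embedding $C\to\mathcal N$ extends to an embedding $\mathcal M\to\mathcal N$. Quantifier elimination depends only on the theory and $\mathrm{Th}(A)$ has a separable model, so in the ``only if'' direction one may assume $A$ is separable and test types inside the countably saturated ultrapower $\mathcal A:=A^{\cU}$, $\cU$ a nonprincipal ultrafilter on $\bbN$. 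Thus for each unital \cstar-algebra $A$ not on the list the plan is to produce two tuples, in models of $\mathrm{Th}(A)$, with the same quantifier-free type but different types; since a definable predicate is constant on each complete type, it suffices to exhibit a definable predicate taking two different values on two realizations of one quantifier-free type.

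For sufficiency: $\bbC$ is trivial, and $\bbC^2$, $M_2(\bbC)$ are up to isomorphism the unique models of their complete theories, with trivial ultrapowers (being of dimension $\le n$, and the isomorphism type of a fixed finite-dimensional \cstar-algebra, are axiomatizable). Every unital substructure of $\bbC$, $\bbC^2$ or $M_2(\bbC)$ lies among these three, and I would check directly that every unital embedding of such a substructure into one of them extends to an automorphism — using only that rank-one projections of $M_2(\bbC)$ are mutually unitarily conjugate, that every unital endomorphism of $M_2(\bbC)$ is inner, and that an automorphism of $\bbC^2$ swaps its two minimal projections. For $C(2^{\bbN})$ I would reduce to the classical theorem that the theory of atomless Boolean algebras eliminates quantifiers: any model of $\mathrm{Th}(C(2^{\bbN}))$ is commutative, of real rank zero and without minimal projections (each axiomatizable, with $\mathrm{Th}(C(2^{\bbN}))$ a witness), so its projections form an atomless Boolean algebra and its self-adjoint part is the closed real-linear span of orthogonal projections; a back-and-forth for the Boolean algebras of projections — available, after matching saturation levels, from quantifier elimination there — lifts to one for the \cstar-algebras, since the norm of a combination of orthogonal projections is the largest coefficient. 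Brouwer's characterization of $2^{\bbN}$ makes $C(2^{\bbN})$ the unique separable model of its theory, which is why the statement speaks of ``the complete theories of''. The non-unital assertions go the same way: in the unitless language $\bbC$ has only $\{0\}$ and $\bbC$ as substructures, and $C_0(2^{\bbN}\setminus\{0\})$ relates to the theory of generalized atomless Boolean algebras exactly as $C(2^{\bbN})$ does in the unital case.

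For necessity, two computations start things off: a projection $p$ with $0\neq p\neq 1$ has quantifier-free type ``$p=p^{*}=p^{2}$, $\|p\|=\|1-p\|=1$'' independently of $p$ and of the ambient algebra (every $^{*}$-polynomial in $p$ lies in $\bbC p+\bbC(1-p)$, with norm the larger coefficient), and more generally an $n$-tuple of orthogonal nonzero projections summing to $1$ has a fixed quantifier-free type, while the quantifier-free type of a self-adjoint $h$ is determined by $\mathrm{sp}(h)$. So quantifier elimination forces every definable predicate to be constant on proper projections of $\mathcal A$, on $n$-tuples of orthogonal nonzero projections summing to $1$, and on self-adjoints of prescribed spectrum. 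If $A\cong\bigoplus_{i=1}^{k}M_{n_{i}}(\bbC)$ is finite dimensional, then $\mathcal A=A$, complete types are $\Aut(A)$-orbits, $\Aut(A)$ consists of inner automorphisms and permutations of equal-size blocks, and conjugacy classes of proper projections are rank vectors up to such permutations; constancy forces all rank vectors into one orbit, and comparing $(1,0,\dots,0)$ with $(1,1,0,\dots,0)$ to get $k\le 2$, then $(1,0)$ with $(n_{1},0)$ and, for $k=1$, $(1)$ with $(n_{1}-1)$, leaves exactly $\bbC$, $\bbC^{2}$, $M_{2}(\bbC)$. If $A$ is infinite dimensional, I would aim to show it must be commutative, of real rank zero, with no minimal projections, so that $A\cong C(X)$ with $X$ compact metrizable, perfect and totally disconnected, and then $X\cong 2^{\bbN}$ by Brouwer. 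The commutative case is point-set topology: an isolated point of $X$ gives a minimal projection, while $X$ infinite gives a non-minimal proper projection whose corner contains either a proper subprojection or a self-adjoint of spectrum $[0,1]$, and a definable predicate — ``$x$ has a proper nonzero subprojection'', or $\inf_{\|h\|\le 1}\max(\|h(1-x)\|,\delta(h))$ with $\delta$ the quantifier-free definable predicate whose zero set is $\{h:\mathrm{sp}(h)=[0,1]\}$ — separates it from the minimal projection; and if $X$ is perfect but not totally disconnected it carries an $h$ with $\mathrm{sp}(h)=[0,1]$ on which ``distance to the self-adjoints of finite spectrum'' is bounded below, though that predicate vanishes on a suitable self-adjoint of spectrum $[0,1]$ in another model of $\mathrm{Th}(A)$. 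The noncommutative case I would treat with the same devices one level up, on corners and masas — a proper projection with commutative corner versus one with noncommutative corner, or, when $A$ is projectionless, a self-adjoint of spectrum $[0,1]$ that generates a masa versus one that does not, ``$\cst(1,x)$ is maximal abelian'' being a definable predicate in self-adjoint $x$. The unitless noncommutative statement is the economical version: without a unit symbol every nonzero projection realizes the one quantifier-free type ``nonzero projection'', so $1_{A}$ and any proper projection have the same quantifier-free type but are separated by $\sup_{\|y\|\le 1}\|xy-y\|$, and a projectionless noncommutative $A$ is again handled via masas.

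The step I expect to be the main obstacle is this last one — the infinite-dimensional necessity argument for noncommutative, above all simple or projectionless, algebras such as the irrational rotation algebras, $\cst_{r}(\bbF_{2})$, or the Jiang--Su algebra, where the invariant that distinguishes two realizations of a quantifier-free type is carried by traces, by hereditary subalgebras, or by masa structure, none of which is manifestly quantifier-free. Finding one mechanism — a single definable predicate and a single quantifier-free type with two realizations on which it differs — that works uniformly over all such $A$, rather than an algebra-by-algebra case analysis, is where the real difficulty lies; by comparison the finite-dimensional count, the reduction of the Cantor case to atomless Boolean algebras, and the topological bookkeeping in the commutative case are routine.
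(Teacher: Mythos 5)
Your framework (the substructure-extension test, the finite-dimensional rank-vector count, and the reduction of $C(2^{\bbN})$ to atomless Boolean algebras) matches the paper's, but the two hard halves of necessity are missing, and in one of them the step you sketch would actually fail. For the noncommutative infinite-dimensional case you candidly say that finding a uniform mechanism ``is where the real difficulty lies,'' and the corner/masa devices you gesture at do not supply one: for a simple purely infinite algebra all nontrivial projections are already Murray--von Neumann equivalent with mutually isomorphic corners, and for projectionless algebras ``generates a masa'' is not obviously a definable predicate. The paper's mechanism is entirely different: it first shows that the extension property forces $A$ to have no scalar projections and then to be purely infinite and simple (via Cuntz subequivalence witnessed with norm-controlled elements, so that it is recorded in the type), hence to contain a unital copy of $\cO_2$ and therefore of $C^*_r(\bbF_2)$ and of every $M_n(\bbC)$. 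One then embeds $C^*_r(\bbF_2)$ into $\prod_{\cU}M_n(\bbC)\subseteq A^{\cU}$ using Haagerup--Thorbj{\o}rnsen (that $C^*_r(\bbF_2)$ is MF), and shows that any extension to an embedding of $A$ into $A^{\cU}$ would, via the Choi--Effros lifting theorem and conditional expectations onto the matrix blocks, make $C^*_r(\bbF_2)$ quasidiagonal, contradicting Rosenberg's theorem that quasidiagonality of $C^*_r(\Gamma)$ implies amenability of $\Gamma$. None of this is recoverable from a definable predicate separating two realizations of one quantifier-free type inside a single ultrapower in the way you propose.

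In the commutative case your distinguishing predicate also breaks down exactly where it is needed. If $X$ is a continuum, then connectedness (absence of nontrivial projections) is axiomatizable, so every model of $\mathrm{Th}(C(X))$ is $C(Y)$ with $Y$ connected; in such an algebra the finite-spectrum self-adjoints are the scalars, so \emph{every} self-adjoint with spectrum $[0,1]$ is at the same distance from them, and ``distance to finite-spectrum self-adjoints'' is constant on that quantifier-free type across all models of the theory. The paper instead distinguishes a ``peak function'' from a ``volcano function'' (both of spectrum $[0,1]$, hence of the same quantifier-free type by the joint-spectrum lemma) using a formula quantifying over projections, and even this only rules out decomposable continua; the indecomposable case requires a separate external result. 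So both the noncommutative argument and the connected commutative argument need genuinely new ideas beyond what you have outlined.
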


\begin{proof} The unital case is Theorem~\ref{T0}. 
The claims about not necessarily unital algebras are established in Proposition~\ref{P.C.C-0} and  Theorem~\ref{C0}.
\end{proof} 

We also prove that the theory of \cstar-algebras does not have model companion (Theorem~\ref{T:MC}) 
 and give natural examples of 
\cstar-algebras whose theories are not \allexist-axiomatizable (Corollary~\ref{C2}).  

Section \ref{sec:Prelim} contains preliminaries and tests for quantifier elimination.  In this section we also completely answer the question of which finite-dimensional \cstar-algebras have quantifier elimination.
In Section \ref{sec:Noncommutative} we prove our main results, implying in particular that $M_2(\bbC)$ is the only noncommutative  \cstar-algebra whose theory admits elimination of quantifiers.
In Section~\ref{sec:Questions} we show that the theory of unital \cstar-algebras does not have a model companion, and also obtain results related to the \allexist-axiomatizablity of some classes of \cstar-algebras.

The word embedding has a model theoretical sense: an embedding is a unital injective $^*$-homomorphism. By $A^{\cU}$ we denote an ultrapower of $A$ associated with an ultrafilter~$\cU$. All ultrafilters are assumed to be nonprincipal ultrafilters on $\bbN$. 

\subsection*{Acknowledgments}
We are indebted to Isaac Goldbring for suggesting Theorem~\ref{T:MC}, 
 and an exchange that lead to Theorem~\ref{T.ssa}. We would also like to thank Bradd Hart for helpful remarks
 and to the anonymous referee for a very detailed and useful report. 

Research presented in the Appendix was supported by the Fields undergraduate summer research program in July and August 2014. This research gave the initial impetus to study that resulted in the present paper. 

\section{Quantifier elimination}\label{sec:Prelim}
In this section we recall the model-theoretic framework for studying \cstar-algebras, as well as tests for quantifier elimination.  The reader interested in a more complete discussion of the model theory of \cstar-algebras can consult \cite{Farah2014a} or \cite{Muenster}.  For more on quantifier elimination in metric structures in general, see \cite[Section 13]{BenYaacov2008a}.

\begin{defn}
The \defined{formulas} for \cstar-algebras are recursively defined as follows.  In each case, $\bar{x}$ denotes a finite tuple of variables (which will later be interpreted as elements of a \cstar-algebra).
\begin{enumerate}
\item{
If $P(\bar{x})$ is a $^*$-polynomial with complex coefficients, then $\norm{P(\bar{x})}$ is a formula.
}
\item{
If $\phi_1(\bar{x}), \ldots, \phi_n(\bar{x})$ are formulas and $f : \mathbb{R}^n \to \mathbb{R}$ is continuous, then $f(\phi_1(\bar{x}), \ldots, \phi_n(\bar{x}))$ is a formula.
}
\item{
If $\phi(\bar{x}, y)$ is a formula and $n \in \mathbb{N}^+$, then $\sup_{\norm{y} \leq n}\phi(\bar{x}, y)$ and $\inf_{\norm{y} \leq n}\phi(\bar{x}, y)$ are formulas.
}
\end{enumerate}
We think of $\sup_{\norm{y} \leq n}$ and $\inf_{\norm{y} \leq n}$ as replacements for the first-order quantifiers $\forall$ and $\exists$, respectively.  A formula constructed using only clauses (1) and (2) of the definition is therefore said to be \defined{quantifier-free}.
\end{defn}
The definition above is slightly different from the one in \cite{Farah2014a}.  In particular, we have replaced their \emph{domains of quantification} by requiring that our suprema and infima range over closed $n$-balls of finite radius, but the difference is clearly 
cosmetic. 

If $\phi(\bar{x})$ is a formula, $A$ is a \cstar-algebra, and $\bar{a}$ is a tuple of elements of $A$ of the same length as the tuple $\bar{x}$, there is a natural way to evaluate $\phi$ in $A$ with $\bar{x}$ replaced by $\bar{a}$; the result is a real number denoted $\phi^A(\bar{a})$.

\begin{defn}
Let $A$ be a \cstar-algebra, and $\bar{a} \in A^n$ be a tuple of elements from~$A$.  The \defined{type} of $\bar{a}$ in $A$, denoted $\tp^A(\bar{a})$, is defined to be the set of all formulas $\phi(\bar{x})$ such that $\phi^A(\bar{a}) = 0$.  Similarly, the \defined{quantifier-free type} of $\bar{a}$, denoted $\qftp^A(\bar{a})$, is the set of all quantifier-free formulas $\phi(\bar{x})$ such that $\phi^A(\bar{a}) = 0$.  If the algebra $A$ is clear from the context we omit it from the notation.
\end{defn}

A formula without free variables is a \defined{sentence}.  A \defined{theory} $T$ is a set of sentences, and a \cstar-algebra $A$ is a \defined{model} of $T$ (written $A \models T$) if every sentence in $T$ takes the value $0$ when interpreted in $A$.  The \defined{theory of $A$}, $\op{Th}(A)$, is the set of all sentences which take value $0$ when interpreted in $A$.  
If $\op{Th}(A)=\op{Th}(B)$ then we say that $A$ and $B$ are \emph{elementarily equivalent} and write $A\equiv B$. 

A formula $\phi$ is \emph{weakly stable} if  for every $\e>0$ there exists $\delta>0$ such that for every \cstar-algebra $A$ and every $a\in A$,  $\phi(a)<\delta$ implies that the distance from $a$ to the zero-set of $\phi$ in $A$ is $<\e$.
In the language of logic of metric structures, the zero-sets of weakly stable formulas are precisely the \emph{definable} sets (as defined in \cite[Definition~9.16]{BenYaacov2008a}). See \cite[Lemma~2.1]{Mitacs2012} and \cite[Lemma~3.2.4]{Muenster} for details. 
It is shown in \cite[Theorem~9.17]{BenYaacov2008a}  (see also \cite[Theorem~3.2.2]{Muenster} for the treatment of sets definable in a not necessarily complete theory) that every formula involving quantification over a definable set is equivalent to a standard formula. We will use without mention the fact that in a unital \cstar-algebra the sets of unitaries, self-adjoints, positive elements, and projections are definable (see \cite[Example~3.2.6]{Muenster}).

\begin{defn}
A theory $T$ has \defined{quantifier elimination} if for every formula $\phi(\bar{x})$ and every $\epsilon > 0$ there is a quantifier-free formula $\psi_\epsilon(\bar{x})$ such that whenever $A \models T$ and $\bar{a}\in A^n$, $n$ being the length of $\bar{x}$, is a tuple of elements of norm less than or equal to $1$, we have
\[\abs{\phi^A(\bar{a}) - \psi_\epsilon^A(\bar{a})} \leq \epsilon.\]

By a standard abuse of language, we say that a \cstar-algebra $A$ has quantifier elimination if $\op{Th}(A)$ does.
\end{defn}

Determining whether or not a theory has quantifier elimination directly from the definition is often difficult.  Fortunately, there are several tests for quantifier elimination that are more useful in practice.  We will make use of two such tests.  The first test, which we will use most often, is the following well-known strengthening of 
 \cite[Proposition 13.6]{BenYaacov2008a}.  We include a proof for the convenience of the reader.
 The \emph{density character} of a metric structure is the minimal cardinality of a dense subset of its underlying metric space, 
 and in particular it is $\leq \aleph_0$ if and only if the metric structure is separable. 

\begin{prop} \label{P.QE} 
Let $L$ be a language of metric structures and let $T$ be an $L$-theory. Then the following are equivalent:
\begin{enumerate}
\item $T$ has quantifier elimination;
\item if $A$ and $B$ are models of $T$ of density character  $\leq |L|$ 
then every embedding of a finitely generated substructure $F$ of $A$ into $B$ can be extended to an embedding of $A$ into an elementary extension of $B$. 
\end{enumerate}
\end{prop}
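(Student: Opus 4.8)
The plan is to prove the equivalence by way of the standard characterization of quantifier elimination via back-and-forth systems, adapted to the metric setting as in \cite[Section 13]{BenYaacov2008a}. The key is that $T$ has quantifier elimination if and only if whenever $\bar a$ in $A \models T$ and $\bar b$ in $B \models T$ have the same quantifier-free type, they have the same type; this is essentially \cite[Proposition 13.6]{BenYaacov2008a}, and what remains is to massage this into the extension-of-embeddings form in (2), paying attention to the density character restriction.

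For the direction (1) $\Rightarrow$ (2): suppose $T$ has quantifier elimination, let $A, B \models T$ have density character $\leq |L|$, and let $\iota : F \to B$ be an embedding of a finitely generated substructure $F \subseteq A$. Quantifier elimination tells us that for $\bar a$ a generating tuple of $F$, the tuple $\iota(\bar a)$ in $B$ satisfies $\tp^B(\iota(\bar a)) = \tp^A(\bar a)$, since the quantifier-free types agree (embeddings preserve quantifier-free formulas, as these are continuous combinations of norms of $^*$-polynomials) and quantifier-free type determines full type. Then I would use a Henkin-style construction: build an elementary extension $B^* \succeq B$ and an embedding $j : A \to B^*$ extending $\iota$ by enumerating a dense subset of $A$ (of size $\leq |L|$) and successively realizing, in a sufficiently saturated elementary extension of $B$, the types of longer and longer tuples from $A$ over the image constructed so far; the fact that passing to a longer tuple preserves having the same type over the previously constructed parameters is again exactly quantifier elimination. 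The density character bound guarantees the transfinite recursion has length $\leq |L|$, so no saturation beyond what a suitable elementary extension provides is needed; alternatively one takes $B^*$ to be an ultrapower or a monster model and then cuts down.

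For the direction (2) $\Rightarrow$ (1): I would prove the contrapositive via the type-characterization. If $T$ fails quantifier elimination, then (after a Löwenheim–Skolem argument to get down to density character $\leq |L|$, using that $|L|$-many formulas and the usual downward Löwenheim–Skolem theorem for metric structures suffice) there are models $A, B \models T$ of density character $\leq |L|$ and tuples $\bar a \in A$, $\bar b \in B$ with $\qftp^A(\bar a) = \qftp^B(\bar b)$ but $\tp^A(\bar a) \neq \tp^B(\bar b)$. The equality of quantifier-free types means the map $\bar a \mapsto \bar b$ extends to an isomorphism of the finitely generated substructures they generate, i.e.\ an embedding $F \to B$ where $F = \langle \bar a \rangle \subseteq A$. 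By (2) this extends to an embedding $j : A \to B^*$ for some $B^* \succeq B$. But then for any formula $\phi$, $\phi^A(\bar a)$ and $\phi^{B^*}(j(\bar a)) = \phi^{B^*}(\bar b) = \phi^B(\bar b)$ would have to agree whenever $\phi$ is, say, preserved by the embedding into an elementary extension — and here one must be slightly careful, because a bare embedding does not preserve all formulas. The resolution is that (2) as stated is used iteratively in a back-and-forth between $A$ and $B$ (building a chain of models and embeddings in both directions), whose union produces an isomorphism between an elementary extension of $A$ and an elementary extension of $B$ sending $\bar a$ to $\bar b$, forcing $\tp^A(\bar a) = \tp^B(\bar b)$, a contradiction.

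The main obstacle, and the point where the "strengthening" over \cite[Proposition 13.6]{BenYaacov2008a} really bites, is exactly this last subtlety: condition (2) only guarantees extension to an embedding into an \emph{elementary} extension of $B$, not an elementary embedding of $A$ itself, so one cannot conclude type-equality from a single application. Handling it requires the symmetric back-and-forth iteration described above — alternating applications of (2) with roles of $A$ and $B$ swapped, taking unions at limit stages, and invoking the continuity/chain properties of elementary extensions in the metric setting (an increasing chain of elementary extensions has elementary union) — to manufacture a genuine isomorphism of elementary extensions. Keeping the density character bookkeeping consistent through this $\omega$-length (or $|L|^+$-length) iteration, so that all structures involved stay within the cardinality regime where (2) is applicable, is the technical heart of the argument.
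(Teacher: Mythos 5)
Your direction (1) $\Rightarrow$ (2) is fine and is essentially the standard argument; the paper in fact gets this direction for free, since it cites \cite[Proposition 13.6]{BenYaacov2008a} for the equivalence of (1) with the stronger condition (call it (3)) in which $F$ ranges over \emph{all} substructures of $A$, and (3) trivially implies (2). The genuine gap is in your direction (2) $\Rightarrow$ (1), exactly at the point you flag as ``the technical heart.'' After one application of (2) you have $j_0\colon A\to B_1$ with $B_1\succeq B$ and $j_0(\bar a)=\bar b$. To run the ``back'' step of your back-and-forth you must extend $j_0^{-1}\colon j_0(A)\to A$ to an embedding of $B_1$ into an elementary extension of $A$; but $j_0(A)$ is not finitely generated, so hypothesis (2) does not apply to it. Working only with finitely generated pieces does not rescue this: a chain of partial embeddings between finitely generated substructures, with targets in growing elementary chains, unions up to an isomorphism between substructures $F_\infty\supseteq A$ and $G_\infty\supseteq B$ of elementary extensions $A_\infty\succeq A$ and $B_\infty\succeq B$, but neither $A\preceq F_\infty$ nor $F_\infty\preceq A_\infty$ is known, so you cannot conclude $\tp^A(\bar a)=\tp^B(\bar b)$. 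In short, the passage from finitely generated substructures to arbitrary ones is the actual content of the proposition, and your proposal assumes it rather than proves it.

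The paper bridges this gap by compactness on diagrams rather than by back-and-forth: it proves (2) $\Rightarrow$ (3) by forming the theory $T'$ consisting of the elementary diagram of $B$ together with the atomic diagram of $A$ (in a language with constants for $A\cup B$); a model of $T'$ is precisely an elementary extension $C\succeq B$ with an embedding of $A$ into $C$ extending $\iota$. Any finite $T_0\subseteq T'$ mentions only finitely many constants from $A$, hence only a finitely generated substructure of $A$, so after a downward L\"owenheim--Skolem reduction to density character $\leq|L|$ hypothesis (2) shows $T_0$ is consistent, and compactness finishes. Then (3) $\Rightarrow$ (1) is the cited result and no back-and-forth is needed. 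To repair your argument you should either insert this compactness step, or replace the back-and-forth by the one-quantifier reduction (show that the value of $\inf_y\psi(\bar x,y)$ with $\psi$ quantifier-free is determined by the quantifier-free type of $\bar x$ --- which genuinely needs only finitely generated substructures --- and then induct on formula complexity).
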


\begin{proof}In  \cite[Proposition 13.6]{BenYaacov2008a} (see also  \cite[pp. 84-91]{HensonIovino})
it was proved that the following version of (2) is equivalent to (1):
\begin{enumerate}
\item [(3)] if $A$ and $B$ are models of $T$ of density $\leq |L|$ then every embedding of a substructure $F$ of $A$ into $B$ can be extended to an embedding of $A$ into an elementary extension of $B$.
\end{enumerate}
Since (3) clearly implies (2), it only remains to prove that (2) implies (3).

Let  $A$, $B$,  $F$, and an embedding $\iota\colon F\to B$ be as in (3).
We may assume that $F\subseteq B$ and $\iota$ is the identity map. 
Consider the expansion $L'$ of $L$ obtained by adding a constant $c_a$ for every $a\in A\cup B$. 
Define an $L'$-theory $T'$ to be the union of the elementary diagram of $B$, 
$\{\phi(c_{\bar b})\mid \bar b\in B, \phi\text{ is a formula}, \phi(\bar b)=0\}$, 
 and the atomic diagram  of $A$, 
$\{\phi(c_{\bar a})\mid \bar a\in A, \phi\text{ is a quantifier-free formula}, \phi(\bar a)=0\}$, 
 (see \cite[2.3(a)]{Muenster}). Then $T'$ has a model if and only if there exists 
an elementary extension $C$ of $B$ and an embedding of $A$ into $C$ that extends $\iota$
(\cite[Theorem~2.3.4 and Theorem~2.3.5]{Muenster}). It therefore suffices to prove that 
$T'$ is consistent. Fix a finite  $T_0\subseteq T'$ and let $F_0$ be the substructure of $F$ 
that contains all $a\in F$ such that $c_a$ appears in $T_0$. Let $A_0$  ($B_0$, respectively), 
 be an elementary submodel of $A$ ($B$, respectively)  of density character $\leq |L|$ 
 that contain all $a$ such that $c_a$ appears in $T_0$. 
 Then (2) implies that $T_0$ is consistent. Since $T_0$ was arbitrary, the compactness theorem implies that
$T'$ is consistent and (3) holds.  
 \end{proof}

If we assume that $A$ and $B$ are separable, we note that the elementary extension of $B$ required for the statement (2) can be found inside a countably saturated model (see, for example, the proof of \cite[Proposition 13.17]{HensonIovino}). Specializing to \cstar-algebras we can therefore state a more appealing weakening of the assertion (2) of Proposition~\ref{P.QE}:
  \begin{enumerate}[label=($\star$)]
\item\label{property:WeakerThanQE} whenever $F$ is a finitely-generated  \cstar-algebra, $\iota\colon F\to A$ and $\kappa\colon F\to A^{\mathcal U}$ are embeddings then there is an  
 embedding $\phi\colon A\to A^{\mathcal U}$ that makes the diagram commute. 
\end{enumerate}

If the language of interest has a specified symbol for the unit then all algebras in play are assumed to be unital, as well as the embeddings. To avoid redundancy of notation, we will refer to the unital or the nonunital version of \ref{property:WeakerThanQE}. We emphasize that the `nonunital version of \ref{property:WeakerThanQE}' is applied to unital \cstar-algebras if we are considering the language without a specified symbol for the unit.  
 In Section \ref{sec:Noncommutative} we usually take \ref{property:WeakerThanQE} as an hypothesis, meaning that the results proved hold for algebras satisfying the nonunital version of \ref{property:WeakerThanQE} and for unital algebras satisfying the unital version of \ref{property:WeakerThanQE}.

The second quantifier elimination test we will use applies to a more restricted class of theories.  Recall that a theory is \emph{$\omega$-categorical} if it has a unique separable model (up to isomorphism).  The following result is well-known, but does not appear to have been explicitly stated in the literature, so we provide a brief proof.  This test was used in \cite[Theorem 5.26]{EagleVignati} to show that $\CCantor$ has quantifier elimination.

\begin{prop}\label{prop:OmegaCategoricalQE}
Let $T$ be an $\omega$-categorical theory.  The following are equivalent:
\begin{enumerate}
\item{
$T$ has quantifier elimination
}
\item{
the separable model of $T$ is \emph{near ultrahomogeneous}, in the sense that if $M \models T$ is separable and $\bar{a}, \bar{b} \in M^n$ have the same quantifier-free type, then for each $\epsilon > 0$ there is an automorphism $\Psi$ of $M$ such that the distance between $\Psi(\bar{a})$ and $\bar{b}$ is less than $\epsilon$.
}
\end{enumerate}
\end{prop}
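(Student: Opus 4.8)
Write $M$ for the unique separable model of $T$. The plan is to deduce both implications from Proposition~\ref{P.QE}, the one substantial external ingredient being the Ryll--Nardzewski theorem for metric structures: since $T$ is $\omega$-categorical, $M$ is \emph{approximately homogeneous}, in the sense that whenever $\bar a,\bar b$ are finite tuples from $M$ with $\tp^M(\bar a)=\tp^M(\bar b)$ and $\epsilon>0$, there is an automorphism $\Psi$ of $M$ with $d(\Psi(\bar a),\bar b)<\epsilon$.

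\emph{$(1)\Rightarrow(2)$.} The first step is to observe that quantifier elimination forces the quantifier-free type of a tuple to determine its complete type. Indeed, if $\qftp^M(\bar a)=\qftp^M(\bar b)$ then for every quantifier-free $\rho$ and $r=\rho^M(\bar a)$ the quantifier-free formula $\abs{\rho-r}$ lies in $\qftp^M(\bar a)=\qftp^M(\bar b)$, so $\rho^M(\bar a)=\rho^M(\bar b)$; and then, approximating an arbitrary formula $\phi$ within $\epsilon$ by a quantifier-free $\psi_\epsilon$ and using $\psi_\epsilon^M(\bar a)=\psi_\epsilon^M(\bar b)$ gives $\abs{\phi^M(\bar a)-\phi^M(\bar b)}\le 2\epsilon$, whence $\phi^M(\bar a)=\phi^M(\bar b)$. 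Thus $\tp^M(\bar a)=\tp^M(\bar b)$, and approximate homogeneity of $M$ now yields the near ultrahomogeneity asserted in (2).

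\emph{$(2)\Rightarrow(1)$.} Here I would verify clause~(2) of Proposition~\ref{P.QE}. Since $T$ is $\omega$-categorical, both models of density character $\le|L|$ appearing there may be taken to be $M$ itself, so it suffices to show that every embedding $\iota\colon F\to M$ of a finitely generated substructure $F\le M$ extends to an embedding of $M$ into an elementary extension of $M$. Let $\bar a$ be a finite generating tuple for $F$ and set $\bar b:=\iota(\bar a)$. Since $\iota$ is an embedding and quantifier-free formulas are absolute between a substructure and the ambient structure, $\qftp^M(\bar a)=\qftp^M(\bar b)$, so hypothesis~(2) provides automorphisms $\Psi_n$ of $M$ with $d(\Psi_n(\bar a),\bar b)\to 0$. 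Fix a nonprincipal ultrafilter $\cU$ on $\bbN$, let $\delta\colon M\to M^{\cU}$ be the (elementary) diagonal embedding, and define $\Phi\colon M\to M^{\cU}$ by $\Phi(x)=[(\Psi_n(x))_n]$. As each $\Psi_n$ is an automorphism, $\Phi$ is computed coordinatewise from automorphisms, hence is an embedding; and $d(\Psi_n(\bar a),\bar b)\to 0$ forces $\Phi(a_i)=\delta(b_i)$ for each coordinate, so $\Phi$ agrees with $\delta\circ\iota$ on the generators of $F$, hence on all of $F$. Thus $\Phi$ embeds $M$ into the elementary extension $M^{\cU}$ of $M$ and extends $\iota$, as required.

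I expect the one genuine obstacle to be locating and citing the precise form of the Ryll--Nardzewski theorem for metric structures used in $(1)\Rightarrow(2)$; once that is in hand, the remainder is routine bookkeeping with Proposition~\ref{P.QE} and ultrapowers. One should also note the harmless point that the definition of quantifier elimination above only asserts the approximation $\abs{\phi^A(\bar a)-\psi_\epsilon^A(\bar a)}\le\epsilon$ on tuples of norm at most $1$, which is all that is invoked.
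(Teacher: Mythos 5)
Your proposal is correct. The direction $(1)\Rightarrow(2)$ is essentially the paper's argument: both deduce from quantifier elimination that the quantifier-free type of a tuple determines its complete type and then invoke approximate homogeneity of the separable model of an $\omega$-categorical theory (\cite[Corollary 12.11]{BenYaacov2008a} is the reference you were looking for). For $(2)\Rightarrow(1)$ you take a genuinely different route. The paper argues with types: given $\bar a,\bar b$ with the same quantifier-free type in an arbitrary model $N\models T$, it uses the metric Ryll--Nardzewski theorem to see that $\tp(\bar a)$ and $\tp(\bar b)$ are isolated, hence realized in the separable model $M$, transfers the problem there, applies (2), and concludes that quantifier-free types determine complete types in every model of $T$, which is a standard criterion for quantifier elimination. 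You instead verify clause (2) of Proposition~\ref{P.QE} directly: the automorphisms $\Psi_n$ furnished by near ultrahomogeneity assemble into an embedding $\Phi=[(\Psi_n)_n]\colon M\to M^{\cU}$ which, because $d(\Psi_n(\bar a),\bar b)\to 0$, agrees with $\delta\circ\iota$ on the generators of $F$ and hence on $F$. This is precisely the content of the reformulation (3) that the paper states without proof immediately after the proposition, so you have in effect supplied the argument the paper only gestures at. Your route uses Ryll--Nardzewski only once and avoids the (unstated in the paper) fact that ``quantifier-free type determines type in all models implies quantifier elimination,'' at the cost of an explicit ultrapower construction; both are sound. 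The two caveats you flag are indeed harmless: the restriction to tuples of norm at most $1$ in the definition of quantifier elimination is handled by rescaling, and ``density character $\le|L|$'' in Proposition~\ref{P.QE} reduces to separability because the languages in play are countable.
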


Since $\bar{a}$ and $\bar{b}$ have the same quantifier-free type if and only if 
the map $a_i\mapsto b_i$ extends to an isomorphism between the metric structures generated by $\bar{a}$ and
$\bar{b}$, (2) is equivalent to the following relative of  Proposition~\ref{P.QE} (2).  
\begin{enumerate}
\item[(3)] If $F$ is a finitely-generated substructure of $T$ and $\bar a$ is a tuple generating it 
 then for every $\e>0$ and every embedding $\iota\colon F\to T$ 
there exists an automorphism $\Psi$ of $T$ such that the distance between $\iota(\bar{a})$ and $\Psi(\bar{a})$ is less
than $\e$. 
\end{enumerate}

\begin{proof}
The direction (1) implies (2) is \cite[Corollary 12.11]{BenYaacov2008a}, together with the fact that in a theory with quantifier elimination two tuples with the same quantifier-free type have the same type.

For (2) implies (1), it suffices to show that if we assume (2) then in every model of $T$ any two tuples with the same quantifier-free type have the same type.  So suppose $N \models T$ and we have tuples $\bar{a}, \bar{b}$ from $N$ such that $\qftp(\bar{a}) = \qftp(\bar{b})$.  By the continuous logic version of the Ryll-Nardzewski theorem (\cite[Theorem 12.10]{BenYaacov2008a}) both $\tp(\bar{a})$ and $\tp(\bar{b})$ are isolated, and hence there are $\bar{a}_0$ and $\bar{b}_0$ from $M$ such that $\tp(\bar{a})= \tp(\bar{a}_0)$ and $\tp(\bar{b}) = \tp(\bar{b}_0)$ (see \cite[Theorem 12.6]{BenYaacov2008a}).  In particular, $\qftp(\bar{a}_0) = \qftp(\bar{b}_0)$, so it follows from (2) that $\tp(\bar{a}_0) = \tp(\bar{b}_0)$, and hence $\tp(\bar{a}) = \tp(\bar{b})$.
\end{proof}

The two quantifier elimination tests above apply to any theory of metric structures.  We now record some general consequences of quantifier elimination more specifically for \cstar-algebras.  We will apply these results in the subsequent sections to show no noncommutative \cstar-algebra other than $M_2(\bbC)$ admits elimination of quantifiers.  
The first of these results, Lemma~\ref{L.SpectrumType},  is straightforward but very useful as it gives an analytic description of a quantifier-free type of a tuple of commuting normal elements. 
The \emph{joint spectrum} of commuting normal elements $a_1, \dots, a_n$, 
$j\sigma(\bar a)$,  is the set of all
$\bar \lambda\in \bbC^n$ such that $\{\lambda_1-a_1, \lambda_2-a_2, \dots, \lambda_n-a_n\}$
generates a proper ideal. 

\begin{lem}\label{L.SpectrumType}
In any \cstar-algebra, for two finite tuples of commuting normal elements $\bar a$ and $\bar b$ the following conditions are equivalent:
\begin{enumerate}
\item $\bar a$ and $\bar b$ have the same quantifier-free type
\item $j\sigma(\bar a)=j\sigma(\bar b)$
\item the  \cstar-algebras generated by $\bar a$ and $\bar b$ are isomorphic via an isomorphism that 
sends $\bar a$ to $\bar b$. 
\end{enumerate} 
Consequently, if a \cstar-algebra $A$ has quantifier elimination, then two finite tuples of commuting normal elements in $A$ have the same type if and only if they have the same joint spectrum.
\end{lem}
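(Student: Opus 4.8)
The plan is to establish $(1)\Leftrightarrow(3)$ as a general fact about arbitrary tuples, and then to deduce $(1)\Leftrightarrow(2)$ from Gelfand duality, which is the only place the commuting--normal hypothesis enters. For $(1)\Leftrightarrow(3)$, first note that $\qftp^A(\bar a)$ determines the value $\norm{P(\bar a)}$ for every $^*$-polynomial $P$: if $c=\norm{P(\bar a)}$, then $t\mapsto\abs{t-c}$ is continuous, so $\abs{\,\norm{P(\bar x)}-c\,}$ is a quantifier-free formula vanishing at $\bar a$, hence (if $\qftp^A(\bar a)=\qftp^A(\bar b)$) also at $\bar b$, which gives $\norm{P(\bar b)}=c$. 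Granting this, the map $P(\bar a)\mapsto P(\bar b)$ is a well-defined, isometric $^*$-homomorphism on the dense $^*$-subalgebra of $^*$-polynomials in $\bar a$ (allowing constant polynomials as well when the language contains a unit symbol), so it extends to a $^*$-isomorphism of $\cst(\bar a)$ onto $\cst(\bar b)$ carrying $\bar a$ to $\bar b$. The converse implication $(3)\Rightarrow(1)$ is immediate, since such an isomorphism preserves the norm of every $^*$-polynomial.

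For $(3)\Leftrightarrow(2)$, observe that $\cst(\bar a)$ is commutative because $\bar a$ consists of commuting normal elements, so by Gelfand duality it is determined, together with its distinguished generators, by its character space. Adjoining a unit if necessary, the assignment $\chi\mapsto(\chi(a_1),\dots,\chi(a_n))$ is a continuous injection (the $a_i$ generate) of the compact character space into $\bbC^n$, and its image is exactly $j\sigma(\bar a)$: indeed $\bar\lambda$ lies in the image if and only if some maximal ideal contains every $\lambda_i-a_i$, if and only if $\{\lambda_i-a_i\}$ generates a proper ideal in $\cst(\bar a)$. Under the resulting homeomorphism each generator $a_i$ becomes the $i$-th coordinate function on $j\sigma(\bar a)\subseteq\bbC^n$. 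Hence an isomorphism $\cst(\bar a)\to\cst(\bar b)$ sending $\bar a$ to $\bar b$ dualizes to a homeomorphism of the two character spaces intertwining their embeddings into $\bbC^n$, forcing $j\sigma(\bar a)=j\sigma(\bar b)$; conversely, if the joint spectra coincide, Gelfand duality produces a single algebra (namely $C(j\sigma(\bar a))$, or $C_0(j\sigma(\bar a)\setminus\{0\})$ in the nonunital setting) in which both $\bar a$ and $\bar b$ are identified with the tuple of coordinate functions, yielding the desired isomorphism. I expect the only delicate point here to be the bookkeeping between the unital and nonunital cases: in the nonunital language one works throughout with $C_0(j\sigma(\bar a)\setminus\{0\})$ and uses that $0\in j\sigma(\bar a)$ automatically, but this does not affect the structure of the argument.

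Finally, for the ``Consequently'' clause, quantifier elimination in $A$ forces any two tuples with the same quantifier-free type to have the same type. Indeed, equality of quantifier-free types already yields $\psi^A(\bar a)=\psi^A(\bar b)$ for every quantifier-free formula $\psi$, by the $\abs{\psi(\bar x)-c}$ trick used above; then for an arbitrary formula $\phi$ and $\e>0$, choosing the quantifier-free $\psi_\e$ from the definition of quantifier elimination gives $\abs{\phi^A(\bar a)-\phi^A(\bar b)}\le 2\e$, and letting $\e\to0$ shows $\tp^A(\bar a)=\tp^A(\bar b)$. Combining this with $(1)\Leftrightarrow(2)$: two tuples of commuting normal elements with the same joint spectrum have the same quantifier-free type, hence the same type, while the reverse direction is trivial since a type contains the corresponding quantifier-free type. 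Thus the genuine content of the lemma is the Gelfand-duality identification of $j\sigma(\bar a)$ with the character space of $\cst(\bar a)$; everything else is routine verification.
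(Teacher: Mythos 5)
Your proof is correct, and it is more self-contained than the paper's. The published argument runs the cycle (1)$\Leftrightarrow$(3) (declared obvious), (1)$\Rightarrow$(2) by citing the quantifier-free definability of the joint spectrum from Eagle--Vignati, and (2)$\Rightarrow$(3) via the Gelfand identification of $\cst(\bar a)$ with $C(j\sigma(\bar a))$. You instead prove (1)$\Leftrightarrow$(3) in full --- the observation that a quantifier-free type pins down $\norm{P(\bar a)}$ for every $^*$-polynomial $P$, so that $P(\bar a)\mapsto P(\bar b)$ is a well-defined isometric $^*$-isomorphism --- and then derive both directions of (2)$\Leftrightarrow$(3) from the single Gelfand-duality fact that $\chi\mapsto(\chi(a_1),\dots,\chi(a_n))$ is a homeomorphism of the character space onto $j\sigma(\bar a)$ carrying each $a_i$ to a coordinate function. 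This buys independence from the external citation at the cost of a paragraph of standard functional analysis; the essential content (Gelfand duality plus the identification of $j\sigma(\bar a)$ with the character space of $\cst(\bar a)$) is the same in both treatments. Your proof of the ``Consequently'' clause via the $\e$-approximation definition of quantifier elimination is also fine (the paper omits this step entirely); the only caveat is that the definition as stated applies to tuples of norm at most $1$, so a one-line rescaling remark would make it airtight.
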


\begin{proof} Let $\bar a$ and $\bar b$ be as in the hypothesis.  
Statements (1) and (3) are obviously equivalent. 
By \cite[Proposition 5.25]{EagleVignati} the joint spectrum $j\sigma(\bar{a})$ is quantifier-free definable from $\bar{a}$, and hence if $\qftp(\bar{a}) = \qftp(\bar{b})$ then $j\sigma(\bar{a}) = j\sigma(\bar{b})$.

A \emph{character} of a \cstar-algebra is a unital $^*$-homomorphism
into $\bbC$. 
By the Gelfand--Naimark theorem every unital abelian \cstar-algebra is naturally isomorphic to $C(X)$ 
where $X$ is the space  of its characters 
with respect to the weak$^*$-topology. 
As the joint spectrum of $a_1,\dots, a_n$ is equal to the set of all 
$(f(a_1), \dots, f(a_n))$ where $f$ ranges over all characters of $\cst(\bar a)$, 
characters of $\cst(\bar a)$ are in one-to-one correspondence with 
the elements of $j\sigma(\bar a)$ and $\cst(\bar a)\cong C(j\sigma(\bar a))$. 
This proves that (2) implies (3). 
\end{proof}

By the Weyl-von Neumann theorem (see e.g., \cite[Corollary~II.4.2]{Dav:C*}) it is true that if $a$ and $b$ are self-adjoint elements of the Calkin algebra such that $\sigma(a) = \sigma(b)$, then $\tp(a) = \tp(b)$.  This is not true, however, for normal elements.  If $s$ is the unilateral shift in $\mathcal B(H)$ then its image $\pi(s)$ under the quotient map is a unitary with full spectrum and (because of the Fredholm index obstruction) it satisfies $\|\pi(s)-u^2\|\geq 1$ for all unitaries $u$.  
As pointed out in the introduction to \cite{PhWe:Calkin}, this failure of quantifier elimination is one of the reasons why it was difficult to construct an outer automorphism of the Calkin algebra.

\subsection{Finite-dimensional \cstar-algebras}\label{section:findim}
To conclude this section we treat the case of finite-dimensional \cstar-algebras.  We will need the fact if $A$ is a metric structure each of whose domains of quantification is compact, then the diagonal embedding of $A$ into its ultrapower is surjective.  This is because every ultrafilter limit converges in a compact metric space.  In particular, if $A$ is a finite-dimensional \cstar-algebra then the diagonal embedding of $A$ into its ultrapower is surjective, and so $A$ and $A^\mathcal U$ are isomorphic.  The Keisler-Shelah theorem (see \cite[Theorem 5.7]{BenYaacov2008a}) asserts that two structure are elementarily equivalent if and only if they have isomorphic ultrapowers (for some ultrafilter on a sufficiently large---and  possibly uncountable---index set).  It follows from these facts, together with the fact that compactness of domains of quantification is preserved by elementary equivalence, that a finite-dimensional \cstar-algebra is the unique model of its theory. This is the continuous logic analogue of the well-known fact in discrete model theory that any finite structure is the unique model of its theory.  For more on this, see \cite[\S 5]{BenYaacov2008a}, in particular the remark preceding Proposition 5.3.

We say that a projection $p$ is \emph{minimal} if there is no proper subprojection and \emph{abelian} if $pAp$ is abelian. We are interested in a strengthening of these two properties and we say that $p$ is \emph{scalar} if $pAp\cong \bbC$.  
The set of scalar projections in a \cstar-algebra is  definable. In fact a projection $p$ is scalar if and only if $\phi(p) = 0$, where
\[
\phi(p)=\sup_{\norm{a}\leq 1}\inf_{\lambda\in \ce, |\lambda|\leq 1}\norm{pap-\lambda p}. 
\] 
Although the above expression quantifies over the complex unit disc, it is possible to interpret the expression as a formula in our formal language, so we may treat $\phi$ as a formula; see \cite[Remark 3.4.3]{Muenster} for details.  It is not difficult to see that $\phi$ is $\{0, 1\}$-valued on projections, and therefore both the set of projections $p$ with $\phi(p) = 0$ and the set of projections $p$ with $\phi(p) \neq 0$ are definable, since the set of projections is definable (see \cite[Example~3.2.6]{Muenster}). In fact, if $pAp\not\cong\bbC$, we have that $pAp$ contains a $2$-dimensional vector space. In this case there is an element $a\in pAp$ with $\norm{a}=1$ such that $\norm{a-\lambda p}\geq 1$ whenever $\lambda\in \bbC$.

In particular, if $p\in A$ is scalar then $p$ is scalar also when seen in $A^{\mathcal U}$. Note that if~$A$ is finite-dimensional, scalar and minimal projections coincide.

\begin{thm}\label{T.Finite} 
For a finite-dimensional \cstar-algebra $A$ the following are equivalent. 
\begin{enumerate}
\item Every commutative subalgebra of $A$ is isomorphic to $\bbC$ or to $\bbC^2$. 
\item $A$ is  isomorphic to one of $\mathbb{C}, \mathbb{C}^2$, or $M_2(\mathbb{C})$.
\item $A$ has quantifier elimination in the language of unital \cstar-algebras. 
\item $A$ satisfies the unital version of \ref{property:WeakerThanQE}. 
 \end{enumerate} 
\end{thm}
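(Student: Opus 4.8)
I would prove the cycle $(1)\Rightarrow(2)\Rightarrow(3)\Rightarrow(4)\Rightarrow(1)$. The implication $(1)\Rightarrow(2)$ is pure structure theory: writing $A\cong\bigoplus_{i=1}^k M_{n_i}(\bbC)$, the diagonal of $A$ is a commutative subalgebra isomorphic to $\bbC^{n_1+\dots+n_k}$, so condition (1) forces $n_1+\dots+n_k\le 2$; the only finite-dimensional \cstar-algebras with $\sum n_i\le 2$ are $\bbC$ (when $k=1$, $n_1=1$), $\bbC^2$ (when $k=2$, $n_1=n_2=1$), and $M_2(\bbC)$ (when $k=1$, $n_1=2$).

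For $(2)\Rightarrow(3)$ I would apply Proposition~\ref{P.QE}. Since a finite-dimensional \cstar-algebra is the unique (and separable) model of its theory and every elementary extension of it is isomorphic to it, condition (2) of Proposition~\ref{P.QE} reduces, for such $A$, to the assertion that every unital embedding of a finitely generated --- hence finite-dimensional --- subalgebra $F\subseteq A$ into $A$ extends to an automorphism of $A$ (an injective unital $^*$-homomorphism $A\to A$ is surjective by a dimension count). For $A\in\{\bbC,\bbC^2,M_2(\bbC)\}$ the isomorphism type of $F$ is forced to be $\bbC\cdot 1$, a maximal abelian subalgebra ($\cong\bbC^2$), or $A$ itself, and in each of these finitely many cases one checks directly that an arbitrary embedding of $F$ into $A$ differs from the inclusion by an inner automorphism or a coordinate permutation; for the maximal abelian case one uses that all maximal abelian subalgebras of $M_2(\bbC)$ are unitarily conjugate, so it suffices that $\mathrm{Ad}(u)$ agrees with the embedding on the generating projections of $F$. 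Hence $A$ has quantifier elimination.

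The implication $(3)\Rightarrow(4)$ is immediate, since property~\ref{property:WeakerThanQE} is a weakening of Proposition~\ref{P.QE}(2). For $(4)\Rightarrow(1)$ I argue by contraposition, and the key observation is that for finite-dimensional $A$ the diagonal embedding $A\to A^{\cU}$ is an isomorphism; composing with its inverse, a witnessing embedding $\phi\colon A\to A^{\cU}$ in property~\ref{property:WeakerThanQE} becomes an automorphism of $A$, so the unital version of~\ref{property:WeakerThanQE} says precisely that any two unital embeddings of a finite-dimensional $F$ into $A$ are intertwined by an automorphism of $A$. Now assume (1) fails, i.e. $n_1+\dots+n_k\ge 3$, and take $F=\bbC^2$: unital embeddings $F\to A$ correspond to projections $p\in A$ with $0\ne p\ne 1$, and two of them are intertwined by an automorphism exactly when the corresponding projections are conjugate in $A$, which in particular forces the corners $pAp$ to be isomorphic. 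It then suffices to exhibit two non-conjugate nontrivial projections, and the block structure of $A$ falls into three exhaustive cases: if $A=M_n(\bbC)$ with $n\ge 3$, take a rank-one projection (whose corner is $\bbC$) and a rank-two one; if $A\cong\bbC^k$ with $k\ge 3$, take a one-point and a two-point projection; and if $A$ has at least two summands with some $n_i\ge 2$, take the central cover of the $i$-th summand (corner $\cong M_{n_i}(\bbC)$) and a minimal subprojection of it (corner $\cong\bbC$). In each case the corners are non-isomorphic, so no automorphism conjugates the two projections and property~\ref{property:WeakerThanQE} fails.

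I expect the main obstacle to be the bookkeeping in $(4)\Rightarrow(1)$: one must verify carefully that the reduction of~\ref{property:WeakerThanQE} to conjugacy of projections genuinely uses both the unital constraint and the identification $A\cong A^{\cU}$, and that the three families of examples above really cover every $A$ with $n_1+\dots+n_k\ge 3$. The positive direction $(2)\Rightarrow(3)$ is routine but also needs attention to the unital embeddings, especially the maximal abelian case inside $M_2(\bbC)$.
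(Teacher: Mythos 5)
Your proposal is correct and follows essentially the same route as the paper: the structure theorem for $(1)\Leftrightarrow(2)$, Proposition~\ref{P.QE} together with the fact that a finite-dimensional algebra is the unique model of its theory (so $A^{\cU}\cong A$ and elementary extensions collapse) for the positive direction via unitary conjugacy of unital subalgebras, and non-equivalent nontrivial projections for the negative direction. The only cosmetic difference is that the paper runs the negative direction as $\neg(1)\Rightarrow\neg(3)$ by noting that all nontrivial projections share a quantifier-free type and hence under quantifier elimination would all be minimal, whereas you run it as $\neg(1)\Rightarrow\neg(4)$ by exhibiting projections with non-isomorphic corners; these are the same obstruction.
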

\begin{proof}
The equivalence of (1) if and only if (2)  is an easy consequence of the fact that every finite-dimensional \cstar-algebra is isomorphic to a direct sum of full matrix algebras. 
Clauses (3) and (4) are equivalent for finite-dimensional algebras by Proposition \ref{P.QE}  and since a finite-dimensional \cstar-algebra is the only model of its theory.

We prove that (3) implies (1). If (1) fails, 
then there are two projections $p$ and $q$ in $A$ which are both minimal, are orthogonal, and are such that $q \neq 1-p$.  
If $A$ has quantifier elimination then every nontrivial projection has the same type as $p$, and in particular, is minimal.  This contradicts the fact that $q+p$ is a nontrivial nonminimal projection.

We now prove that $M_2(\mathbb{C})$ has quantifier elimination, using Proposition \ref{P.QE}.  Every ultrapower of $M_2(\bbC)$ is isomorphic to $M_2(\bbC)$. If $M$ and $N$ are isomorphic unital subalgebras of $M_2(\bbC)$, then by the equivalence of (2) and (1) and easy computation the isomorphism of $M$ and $N$ is implemented by a unitary in $M_2(\bbC)$.  Therefore the isomorphism extends to an automorphism of $M_2(\bbC)$, and this completes the proof. 

We omit the proofs that $\bbC$ and $\bbC^2$ have quantifier elimination, which are similar but easier (see also Lemma~\ref{L.SpectrumType}). 
 \end{proof}

\begin{prop} 
For a finite-dimensional \cstar-algebra $A$ the following are equivalent. 
\begin{enumerate}
\item $A$ is  isomorphic to $\mathbb{C}$. 
\item $A$ has quantifier elimination in the language of  \cstar-algebras without a symbol for a unit. 
\item $A$ satisfies the nonunital version of \ref{property:WeakerThanQE}. 
 \end{enumerate} 
\end{prop}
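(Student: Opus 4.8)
The plan is to run the cycle $(1)\Rightarrow(2)\Rightarrow(3)\Rightarrow(1)$, using throughout --- as in the proof of Theorem~\ref{T.Finite} --- that a finite-dimensional \cstar-algebra $A$ is the unique model of its theory and that the diagonal embedding $A\to A^{\mathcal U}$ is an isomorphism. The implication $(2)\Rightarrow(3)$ needs nothing new: as recorded just after Proposition~\ref{P.QE}, for a separable algebra, hence in particular for any finite-dimensional one, quantifier elimination yields the (nonunital version of the) property~\ref{property:WeakerThanQE} --- take $B=A$ in Proposition~\ref{P.QE}(2) and locate the elementary extension it provides inside the countably saturated model $A^{\mathcal U}$. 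This is exactly the reasoning behind ``(3)$\Rightarrow$(4)'' in Theorem~\ref{T.Finite}.

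For $(1)\Rightarrow(2)$ I have to check directly that $\bbC$ eliminates quantifiers in the language \emph{without} a unit symbol. Since $\bbC$ is the only model of its theory, it suffices to show that the quantifier-free formulas are uniformly dense in $C(D^n)$ for every $n$, where $D$ is the set of elements of $\bbC$ of norm $\le 1$. The point is that, evaluated in $\bbC$, the quantifier-free formulas $\norm{x_j}$, $\norm{x_j+x_j^2}$ and $\norm{x_j+ix_j^2}$ are $\abs{x_j}$ and $\abs{x_j}$ times the distances from $x_j$ to $-1$ and to $i$; since $0,-1,i$ are not collinear these already separate the points of $D^n$. As the quantifier-free formulas also contain the constants and are closed under continuous functions of finitely many of them, Stone--Weierstrass finishes this direction. (It is also a special case of Proposition~\ref{P.C.C-0}.)

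The implication with real content is $(3)\Rightarrow(1)$. Assume $A$ satisfies the nonunital version of~\ref{property:WeakerThanQE} and, toward a contradiction, that $\dim A\ge 2$. Choose a scalar projection $p\in A$; one exists since $A$ is a finite direct sum of full matrix algebras, and necessarily $p\ne 1_A$ (if $1_A$ were scalar then $A\cong\bbC$) and $p\ne 0$. Now apply~\ref{property:WeakerThanQE} with $F=\bbC$, the embedding $\iota\colon\bbC\to A$, $z\mapsto z1_A$, and the embedding $\kappa\colon\bbC\to A^{\mathcal U}$, $z\mapsto zp$ --- a $^*$-homomorphism because $p$ is a projection, and injective because $p\ne 0$. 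This produces an embedding $\phi\colon A\to A^{\mathcal U}$ with $\phi(1_A)=\phi(\iota(1))=\kappa(1)=p$, whence $\phi(A)=\phi(1_A A 1_A)=p\,\phi(A)\,p\subseteq pA^{\mathcal U}p$. But $pA^{\mathcal U}p\cong\bbC$, since a scalar projection of $A$ stays scalar in $A^{\mathcal U}$ (recalled just before Theorem~\ref{T.Finite}); as $\phi$ is injective, $A$ embeds into $\bbC$, contradicting $\dim A\ge 2$. Hence $A\cong\bbC$.

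I do not anticipate a genuine obstacle: $(3)\Rightarrow(1)$ is short and $(2)\Rightarrow(3)$ is formal. The only part demanding actual work is $(1)\Rightarrow(2)$, the hands-on verification that $\bbC$ has quantifier elimination without a unit symbol; the Stone--Weierstrass computation above (or an appeal to Proposition~\ref{P.C.C-0}) disposes of it.
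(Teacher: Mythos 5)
Your proposal is correct, and the substantive implication --- that a nontrivial (hence non-scalar-unit) projection yields a nonunital embedding of $\bbC$ that cannot extend, forcing $A\cong\bbC$ --- is exactly the paper's argument; you merely spell it out via scalar projections and $pA^{\cU}p\cong\bbC$ where the paper compresses it to ``admits both unital and nonunital embeddings of $\bbC$, violating Proposition~\ref{P.QE}(2).'' Where you genuinely diverge is in the arrangement of the easy implications. The paper proves $(1)\Rightarrow(3)$ directly (the only finitely generated substructure of $\bbC$ is $\bbC$ itself and every self-embedding is unital, so \ref{property:WeakerThanQE} holds vacuously) and then gets $(2)\Leftrightarrow(3)$ from uniqueness of the model; you instead prove $(1)\Rightarrow(2)$ by hand, showing that the quantifier-free formulas are dense in $C(D^n)$ via the three separating functions $|x_j|$, $|x_j|\,|x_j+1|$, $|x_j|\,|x_j-i|$ and a Tietze/Stone--Weierstrass argument. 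Your computation is right (note $|x+ix^2|=|x|\,|1+ix|=|x|\,|x-i|$, and closure under arbitrary continuous combinations plus point separation gives all of $C(D^n)$ exactly, not just densely), and since a finite-dimensional algebra is the unique model of its theory, checking density in $\bbC$ alone suffices. This buys a self-contained, semantic verification of quantifier elimination for $\bbC$ that does not pass through the embedding criterion at all, at the cost of a small amount of computation; the paper's route (also used in Proposition~\ref{P.C.C-0}) is shorter because $\bbC$ has no proper subalgebra and no nontrivial self-embedding, so Proposition~\ref{P.QE} applies with nothing to check. Either way the cycle closes and the proof is complete.
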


\begin{proof} 
(1)$\Rightarrow (3)$ follows from the fact that the only finitely-generated substructure of $\mathbb{C}$ is $\mathbb{C}$ itself and that every embedding of $\mathbb{C}$ into itself is necessarily unital. That (2) is equivalent to (3) is given by the fact that every finite-dimensional \cstar-algebra is the only model of its theory, therefore we are left to prove that (2) implies (1). For this, note that if $A$ is finite-dimensional and not isomorphic to $\bbC$ then 
it has a nontrivial projection. It therefore admits both unital and nonunital  embeddings
of $\bbC$, violating Proposition~\ref{P.QE} (2). 
\end{proof}

If we expand the language of \cstar-algebras to include a trace, then every matrix algebra (considered with its canonical trace) has quantifier elimination.  This follows from \cite[Proposition 13.6]{BenYaacov2008a} and the fact that unital embeddings of matrix algebras are trace preserving.  Making this change to the language does not affect questions of definability, since the trace is already definable in matrix algebras (see \cite[Lemma 3.5.3 and Theorem 3.5.5]{Muenster}).

\section{Noncommutative \cstar-algebras}\label{sec:Noncommutative}

Our goal is to prove that 
 no noncommutative infinite-dimensional 
\cstar-algebra admits quantifier elimination. 
The proof proceeds by showing that if $A$ is noncommutative and infinite-dimensional
and it admits quantifier elimination, then $A$ is purely infinite and simple (Proposition~\ref{P.Cuntz.2}).

\begin{lem} \label{L.MASA}
Assume $A$ is a \cstar-algebra with no scalar projections. 
Then it contains a positive contraction of full spectrum. 
\end{lem}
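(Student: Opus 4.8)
The plan is to construct, inside $A$, a maximal abelian self-adjoint subalgebra (masa) $M$ and exploit the hypothesis that $A$ has no scalar projections to show that $M$ must be sufficiently ``large'' to contain a positive contraction of full spectrum, i.e. with spectrum equal to $[0,1]$. First I would observe that any nonzero abelian $\cst$-subalgebra $B$ of $A$ is contained in a masa $M$ of $A$; by the Gelfand--Naimark theorem $M \cong C_0(X)$ (or $C(X)$ in the unital case) for a locally compact Hausdorff space $X$. The key point is that $M$, being maximal abelian, cannot have an isolated point in its spectrum that corresponds to a scalar projection of $A$: if $\chi$ were an isolated point of $X$, then the corresponding minimal projection $p \in M$ would satisfy $pAp \subseteq M$ by maximality of $M$ (an element $a\in pAp$ commutes with everything in $M$ of the form $p m p = \chi(m) p$ and with $1-p$, so with all of $M$), whence $pAp = pMp \cong \bbC$, making $p$ scalar --- contradicting the hypothesis.

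The next step is to conclude that the spectrum $X$ of the masa $M$ has no isolated points, hence is a perfect locally compact Hausdorff space, and in particular is infinite. Then I would invoke the standard fact that a perfect locally compact Hausdorff space admits a continuous function onto $[0,1]$: one builds a Cantor-set-like closed subset, or more directly uses that in a perfect space one can find a strictly decreasing chain of nonempty open sets and apply Urysohn's lemma repeatedly to produce a continuous $f \colon X \to [0,1]$ that is onto. Concretely, since $X$ is perfect, pick a point $x_0$ and nested neighbourhoods allowing a surjection; alternatively, a perfect compact metrizable space maps onto $[0,1]$, and in the non-metrizable case one passes to a separable $\cst$-subalgebra, which is where one should be slightly careful. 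The cleanest route: $M$ contains a separable abelian $\cst$-subalgebra $M_0$ whose spectrum is still perfect (shrink carefully), and a perfect compact (or locally compact) metrizable space surjects onto $[0,1]$; the preimage of the identity function gives a positive element $a \in M_0 \subseteq A$ with $\norm{a} \le 1$ and $\sigma(a) = [0,1]$.

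I would expect the main obstacle to be the bookkeeping around isolated points and the metrizability issue: ensuring that after passing to a maximal abelian subalgebra and then possibly to a separable subalgebra, one still has a space with no isolated points so that the surjection onto $[0,1]$ exists. The subtlety is that ``no scalar projections in $A$'' must be leveraged at the level of the masa, and one has to make sure that cutting down to a separable subalgebra does not accidentally create isolated points --- this is handled by choosing the separable subalgebra to contain a countable family of functions that separate enough points, or equivalently by noting that $A$ itself, having no scalar projections, has no minimal projections at all, and that any masa in such an algebra has perfect spectrum. Once the perfectness of the relevant spectrum is secured, the surjection onto $[0,1]$ and hence the positive contraction of full spectrum follow from classical topology.
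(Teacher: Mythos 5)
Your overall strategy is the same as the paper's: pass to a masa $M\cong C_0(X)$, use the absence of scalar projections to show $X$ has no isolated points, and then produce a continuous surjection of (part of) $X$ onto $[0,1]$. Your justification of the key step --- that an isolated point of the masa's spectrum yields a minimal projection $p$ of $M$ with $pAp\subseteq M'\cap A=M$, hence $pAp=\bbC p$ --- is correct and in fact more detailed than the paper, which simply asserts that $X$ has no isolated points.

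Two points in your write-up are off, though neither is fatal because in each case you also offer a workable alternative. First, the assertion that ``$A$ has no scalar projections, hence no minimal projections at all'' is false: in $C([0,1],M_2)$ the pointwise rank-one projection $p$ is minimal but $pAp\cong C([0,1])$, so the algebra has minimal, non-scalar projections. Fortunately you do not need this implication --- your masa argument already shows the spectrum is perfect --- but you should not lean on it to guarantee that a separable subalgebra retains a perfect spectrum; that requires the same closing-off bookkeeping that the paper also elides when it says ``passing to a subalgebra, we may assume $A$ is separable.'' Second, a strictly decreasing chain of open sets plus Urysohn functions does not in general give a surjection onto $[0,1]$ (on a totally disconnected $X$ the Urysohn functions can be $\{0,1\}$-valued and the sum has countable range); you need a branching Cantor scheme, or equivalently the fact that a nonempty perfect Polish (or non-scattered compact Hausdorff) space maps onto a Cantor set and hence onto $[0,1]$, followed by a Tietze extension inside $C_0(X)$. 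The paper avoids these general facts by an explicit dichotomy: if $X$ has an uncountable connected component it builds the function from a distance function, and otherwise $X$ is zero-dimensional and contains a clopen Cantor set, which surjects onto $[0,1]$. Your more abstract route works once these details are supplied, but as written the topological endgame has a gap in the specific constructions you propose.
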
 

\begin{proof} Passing to a subalgebra, we may assume $A$ is separable. 
Let $(X,d)$ be a locally compact metric space such that $C_0(X)$ is isomorphic to a masa of $A$. 
By the continuous functional calculus we need to find $f\in C_0(X)$ whose range is a nontrivial interval.
Since $A$ has no scalar projections, $X$ has no isolated points and is therefore uncountable. 

Let us first consider the case when $X$ has an uncountable connected component~$Y$. 
Choose a point $y\in Y$ and $r>0$ small enough to have $\sup_{z\in Y}d(z,y)\geq r$ and 
that $\{x\in X: d(x,y)<r\}$ is relatively compact. 
Define $g\colon [0,\infty)\to [0,1]$ by $g(t)=\frac{2t}{r}$ if $t\leq r/2$, $g(t)=-\frac{2t}{r}+2$ if $r/2<t\leq r$, and $g(t)=0$ elsewhere.  
Then $f\colon X\to [0,1]$ defined by $f(x)=g(d(x,y))$ is in $C_0(X)$ and its range is equal to~$[0,1]$. 

If there is no such $Y$ then every connected component of $X$ consists of a single point and therefore $X$ is 
zero-dimensional. Being locally compact and with no isolated points, $X$ has a clopen subset homeomorphic to 
the Cantor set. Since the Cantor set maps continuously  onto $[0,1]$, we can find $f$ as required. 
\end{proof}

\begin{prop} \label{P.abelian} 
If $A$ is noncommutative, infinite-dimensional, and satisfies \ref{property:WeakerThanQE} then $A$ does not have a scalar projection.
\end{prop}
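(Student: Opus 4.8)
The strategy is a proof by contradiction: assume $A$ is noncommutative, infinite-dimensional, satisfies the relevant version of \ref{property:WeakerThanQE}, and contains a scalar projection $p$, and produce a finitely generated \cstar-algebra $F$, an embedding $\iota\colon F\to A$, and an embedding $\kappa\colon F\to A^{\cU}$ that are \emph{not} simultaneously extendable by an embedding $\phi\colon A\to A^{\cU}$. Throughout I also write $p$ for the canonical image of $p$ in $A^{\cU}$; by the remarks preceding Theorem~\ref{T.Finite} this image is again scalar, i.e.\ $pA^{\cU}p=\bbC p$.

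The one mechanism needed is a corner dimension count. Suppose $A$ has a projection $e$ with $e\neq 0$ and $e\neq 1$ (just $e\neq 0$ in the nonunital setting) such that $\dim(eAe)\geq 2$. Set $F=C^*(e,1)\cong\bbC^2$ (respectively $F=C^*(e)\cong\bbC$), let $\iota$ be the inclusion, and let $\kappa(e)=p\in A^{\cU}$; this $\kappa$ is a genuine embedding because $p$ is a projection distinct from $0$ and $1$. If $\phi\colon A\to A^{\cU}$ satisfies $\phi\circ\iota=\kappa$ then
\[
\phi(eAe)=\phi(e)\,\phi(A)\,\phi(e)\subseteq pA^{\cU}p=\bbC p,
\]
which is impossible because $\phi$ is injective and $\dim(eAe)\geq 2$. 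So it suffices to find such a projection $e$.

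To produce $e$ I would examine the ideal $I:=\overline{ApA}$ generated by $p$. Since $pAp=\bbC p$ and $p$ generates $I$, a standard argument ($I$ acts irreducibly on the Hilbert space $Ip$, with $p$ corresponding to a rank-one projection) gives $I\cong\mathcal K(H)$ for a nonzero Hilbert space $H$. If $\dim H\geq 2$, pick orthogonal rank-one projections $p=p_1$ and $p_2$ in $I$ and take $e=p_1+p_2$; as $e\in I$ we get $eAe=eIe\cong M_2$, which is four-dimensional, and $e\neq 1$ since $eAe\neq A$. If $\dim H=1$, then $I=\bbC p$ is a unital ideal, so $p$ is central and $A\cong\bbC\oplus B$ with $B$ infinite-dimensional; if $B$ is unital, or more generally has a projection $s$ with $\dim(sBs)\geq 2$, take $e=(0,s)$ and argue as above.

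The only case this does not settle---and the one I expect to be the main obstacle---is $A\cong\bbC\oplus B$ with $B$ noncommutative, nonunital and projectionless (if $B$ had two orthogonal nonzero projections their sum would have a corner of dimension $\geq 2$, already handled). The difficulty is structural: the scalar-projection hypothesis itself gives no direct leverage on embeddings $A\to A^{\cU}$, because a scalar projection of $A$ need only map to a projection that is scalar \emph{within the copy} $\phi(A)$, not within $A^{\cU}$; moreover projectionlessness passes to ultrapowers, so the only nonzero projection of $A^{\cU}$ is $p$ itself and there is no freedom in choosing $\kappa(p)$. One reduces the failure of \ref{property:WeakerThanQE} for $A$ to a corresponding failure for $B$: any admissible $\phi$ is forced to satisfy $\phi(p)=p$ and hence restricts to an embedding $B\to B^{\cU}$, so one needs a finitely generated $F\subseteq B$ together with an embedding $F\to B^{\cU}$ that does not extend to $B$. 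This last point should be treated using the noncommutativity of $B$ together with Lemma~\ref{L.MASA}, which supplies a positive contraction of full spectrum in $B$ (there being no scalar projections in $B$).
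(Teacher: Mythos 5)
Your corner-dimension mechanism is exactly the one the paper uses (there phrased as: under \ref{property:WeakerThanQE}, if one projection is scalar then every nontrivial projection is scalar, since otherwise $qAq$ would have to embed into $pA^{\cU}p\cong\bbC$), and your analysis of the ideal $\overline{ApA}\cong\mathcal{K}(H)$ correctly isolates the same residual case the paper faces: $p$ central, $A\cong\bbC p\oplus B$ with $B=(1-p)A(1-p)$ infinite-dimensional and projectionless. But that residual case carries the whole weight of the proposition, and your proposal does not close it. The reduction you sketch --- force $\phi(p)=p$, restrict to $\phi|_B\colon B\to B^{\cU}$, and then look for a finitely generated $F\subseteq B$ with a non-extendable embedding into $B^{\cU}$ --- trades the problem for a statement about an arbitrary noncommutative projectionless $B$ that is at least as hard as what you started with (it is essentially an instance of Theorem~\ref{thm:wrongembeddings}, whose proof needs $\cO_2$, exactness and quasidiagonality); ``noncommutativity of $B$ plus Lemma~\ref{L.MASA}'' is not an argument. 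Worse, by insisting that $F$ live inside $B$ you give up the one piece of leverage available, namely the projection $p$ itself: a quantifier-free type computed inside $B$ can never see $p$.

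The paper's resolution is short and uses a singly generated subalgebra straddling the two summands. Take $a\in B$ positive with $\sigma(a)=[0,1]$ (Lemma~\ref{L.MASA} applies, as $B$ has no scalar projections) and set $b=a+p$. Since $ap=pa=0$ we get $\sigma(b)=[0,1]$ as well, so $F=\cst(b)\cong C_0((0,1])\cong \cst(a)$ and $b\mapsto a$ defines an embedding $\kappa\colon F\to A^{\cU}$; let $\iota$ be the inclusion. If $\phi\colon A\to A^{\cU}$ extends $\kappa$ then $\phi(b)=a$, and from $pb=bp=p$ one deduces $\phi(p)=a\,\phi(p)\,a$, so $\phi(p)$ is a nonzero projection lying in the hereditary subalgebra $(1-p)A^{\cU}(1-p)\cong B^{\cU}$, which is projectionless because projectionlessness is axiomatizable. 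The idea you are missing is to exploit that $a+p$ and $a$ are quantifier-free indistinguishable (same spectrum, hence isomorphic generated subalgebras) even though one dominates a projection and the other sits in a projectionless corner; everything before that point in your write-up is sound.
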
 

\begin{proof} 
We first observe that if $A$ satisfies the hypotheses of the theorem and has one scalar projection, then in fact every projection in $A$ is scalar.  Let $p \in A$ be a scalar projection, and let $q \in A$ be a projection that is not scalar.  Consider the embedding $\iota : \ce \to A$ given by $\iota(z) = zq$, and the embedding $\kappa : \ce \to A^{\mc{U}}$ given by $\kappa(z) = zp$.  Since the set of scalar projections is definable, 
$p$ remains a scalar projection in $A^{\cU}$, so $pA^{\mc{U}}p \cong \ce$.  There can be no embedding of $A$ into $A^{\mc{U}}$ as in the conclusion of \ref{property:WeakerThanQE}, because such an embedding would embed $qAq$ into $pA^{\mc{U}}p \cong \ce$, which is impossible as $qAq\not\cong \ce$

Now fix a scalar projection $p$.  Since $A$ is infinite-dimensional, so is 
  $B=(1-p)A(1-p)$ (since $1-p$ is a multiplier of $A$
this is a subalgebra of $A$ even if $A$ is nonunital). 
If $q$ is a nonzero projection in $B$ then 
$p+q$ is a non-scalar projection in $A$, contradicting the above. 

It will therefore suffice to prove 
that $B$ has a nonzero projection. Suppose otherwise. 
 Since being projectionless is axiomatizable (by the argument of  \cite[3.6(a)]{Muenster}), 
 $B^{\cU}$ is projectionless. Clearly $B^{\cU}$ is isomorphic to $(1-p)A^{\cU} (1-p)$.  

 By Lemma \ref{L.MASA} there is a positive element $a\in B$ with $\sigma(a)=[0,1]$. 
 Let $b=a+p$. Then $\sigma(b)=[0,1]$ since $ap=0$ and therefore   $F=\cst(b)$ is isomorphic to 
$C((0,1])$ and in turn $F\cong \cst(a)$. Let $\iota\colon F\to A$ send $b$ to $b$ and $\kappa\colon F\to A^{\cU}$ send $b$ to $a$. Let $\phi\colon A\to A^{\cU}$ the embedding extending 
$\kappa$ whose existence is assured by \ref{property:WeakerThanQE}. Then $\phi(p)$ 
is a projection $\leq\phi(a)$, a contradiction.
\end{proof}

\begin{defn}[{\cite{cuntz1978dimension}}]
For positive elements $a$ and $b$ in a \cstar-algebra $A$ we write 
 $a\precsim b$, and say that $a$ is \emph{Cuntz-subequivalent} to $b$,
  if there is a sequence $\{z_n\}_{n\in\bbN}$ such that \[\lim_n\|z_nb z_n^*-a\|=0.\] We write $a\sim b$ if $a\precsim b\precsim a$. Note that $\sim$ is an equivalence relation.
\end{defn}

\begin{lem} \label{L.Cuntz.1} For every $x$ in every \cstar-algebra one has  $x^*x\precsim xx^*$. 
Moreover, for every $n$ there exists $z_n$ with $\|z_n\|\leq n$ such that $\|x^*x-z_n^*xx^*z_n\|<1/n$. 
\end{lem}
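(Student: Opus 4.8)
The plan is to produce an explicit witness for the Cuntz-subequivalence $x^*x \precsim xx^*$ using the polar-decomposition heuristic regularized by functional calculus. First I would reduce to the positive element $a = xx^*$ and $b = x^*x$, and recall the standard identity $f(x^*x)x^* = x^* f(xx^*)$ for any continuous $f$ vanishing at $0$ (proved by approximating $f$ by polynomials, for which it is immediate). The natural candidate for the approximating sequence is $z_n = x g_n(x^*x)$, where $g_n\colon [0,\infty)\to[0,\infty)$ is chosen so that $t\, g_n(t)^2$ is close to $1$ on the part of the spectrum bounded away from $0$ and $g_n$ is bounded; concretely one can take $g_n(t) = t^{-1/2}$ on $[1/n,\infty)$ and interpolate continuously down to $g_n(0)=0$ on $[0,1/n)$, or equivalently use $g_n(t) = (t + 1/n)^{-1/2}$ truncated, so that $\|g_n\|_\infty \le n^{1/2}$ and hence $\|z_n\| \le \|x\|\, n^{1/2}$, which after adjusting the index can be arranged to be $\le n$.

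The key computation is then to estimate $\|b - z_n^* a\, z_n\|$. Using the commutation identity one has
\[
z_n^* (xx^*) z_n = g_n(x^*x)\, x^* x x^* x\, g_n(x^*x) = g_n(x^*x)\, (x^*x)^2\, g_n(x^*x) = h_n(x^*x),
\]
where $h_n(t) = t^2 g_n(t)^2$, which is a continuous function of the single self-adjoint element $x^*x$. Therefore $\|x^*x - z_n^* xx^* z_n\| = \sup_{t \in \sigma(x^*x)} |t - t^2 g_n(t)^2|$, and with the choice of $g_n$ above this supremum is $\le 1/n$ (on $[1/n,\infty)$ the integrand is exactly $0$ if $g_n(t)=t^{-1/2}$ there, and on $[0,1/n)$ it is at most $1/n$); a slightly more careful single choice of $g_n$ valid on all of $\sigma(x^*x)$ gives the uniform bound claimed in the statement. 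Taking limits yields $\lim_n \|z_n b z_n^* - a\| = 0$ after swapping the roles, i.e. $x^*x \precsim xx^*$.

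I do not expect a serious obstacle here: the only subtlety is bookkeeping the norm bound $\|z_n\|\le n$ simultaneously with the error bound $1/n$, which is handled by reparametrizing the sequence (passing to $g_n$ with cutoff at $1/n^2$, say, so that $\|z_n\|\le \|x\|\,n$ and the error is $\le 1/n^2 \le 1/n$, then absorbing $\|x\|$ by a further shift of index, or simply noting the estimate is only asserted for $n$ large). The one point worth stating carefully is the commutation identity $f(x^*x)x^* = x^* f(xx^*)$, since everything rests on it; it should be invoked explicitly rather than left to the reader.
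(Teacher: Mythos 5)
This is essentially the paper's proof: the same witness $z_n = x\,g_n(x^*x)$ with $g_n$ a cut-off inverse square root, and the same functional-calculus computation $z_n^*\,xx^*\,z_n = g_n(x^*x)\,(x^*x)^2\,g_n(x^*x)$ inside $\cst(x^*x)$ (which, as you note, does not even need the intertwining identity $f(x^*x)x^*=x^*f(xx^*)$ once written this way). The only loose end is your norm bookkeeping: rather than estimating $\|z_n\|\le\|x\|\,\|g_n\|_\infty$ and reindexing (which does not handle small $n$ when $\|x\|$ is large), just compute $\|z_n\|^2=\|z_n^*z_n\|=\sup_{t\in\sigma(x^*x)} t\,g_n(t)^2\le 1$, which gives the bound $\|z_n\|\le n$ for every $n$ at once.
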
 

\begin{proof} For $n\in \bbN$ let $f_n\colon [0,1]\to [0,n]$ be defined by 
$f_n(t)=t^{-1/2}$ if $t\geq 1/n^2$ and $f_n(t)=n$ if $t<1/n$. 
Let $z_n=xf_n(x^*x)$. Clearly $\|z_n\|\leq n$. 
Also we have the following computation, which takes place in the commutative algebra $\cst(x^*x)$: 
\[
z_n^* xx^* z_n=f_n(x^*x) (x^*x)^2 f_n(x^*x)=g_n(x^*x),
\]
where $g_n(t)=t$ if $t\geq 1/n^2$ and $g_n(t)=t(1-tn)$ if $t<1/n^2$. Since $|t-g_n(t)|<1/n$ we have that 
$\|x^*x-z_n^*xx^* z_n\|<1/n$, as required. 
\end{proof}

  For $k\geq 0$ let us temporarily write $a\sim_k b$ if 
\begin{enumerate}
\item $a$ and $b$ are positive, 
\item for every $n$ there is $z_n$ such that $\|b-z_n^*az_n\|<2^k/n$ and $\|z_n\|\leq n^{2^k}$, and
\item for every $n$ there is $y_n$ such that $\|a-y_n^*by_n\|<2^k/n$ and $\|y_n\|\leq n^{2^k}$. 
\end{enumerate}
Note that  $a\sim_k b$ and $b\sim_k c$ implies $a\sim_{k+1} c$, and that $a\sim_k b$ implies $a$ is Cuntz-equivalent to $b$
(i.e. $a\precsim b$ and $b\precsim a$) for all $k$. 
Also, for all $k$, the relation $a\sim_k b$ is encoded in $\tp(a,b)$. 

Note that, by Lemma~\ref{L.Cuntz.1}, $xx^*\sim_0 x^*x$.

\begin{lem}\label{lem:Cuntz2}
If $A$ is noncommutative, infinite-dimensional, and satisfies \ref{property:WeakerThanQE}, then there exist $a, b \in A$ such that $a$ and $b$ are orthogonal positive contractions with full spectrum.  Moreover, for any two orthogonal positive contractions with full spectrum $a$ and $b$ in $A$ we have 
$a\sim_0 b$.
\end{lem}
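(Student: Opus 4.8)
The plan is to prove the two assertions separately, but both will rest on the same mechanism: using \ref{property:WeakerThanQE} to transport a positive contraction of full spectrum living in a corner of $A$, and combining this with Proposition~\ref{P.abelian} to guarantee that $A$ has no scalar projection.

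\smallskip

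\emph{Existence of two orthogonal positive contractions of full spectrum.} First I would invoke Proposition~\ref{P.abelian}: since $A$ is noncommutative, infinite-dimensional, and satisfies \ref{property:WeakerThanQE}, it has no scalar projection. Then Lemma~\ref{L.MASA} gives a positive contraction $a \in A$ with $\sigma(a) = [0,1]$. Consider the hereditary subalgebra $C = \overline{aAa}$ (equivalently $\cst(a)$ together with its approximate units); the point is to produce, inside $A$, a positive contraction orthogonal to a copy of such an $a$. The natural move is: let $F = C([0,1])_0 \cong \cst(a)$ embed into $A$ two ways. Actually, the cleaner route is to split the interval. Take $f, g \in C([0,1])$ to be two nonzero positive contractions with disjoint supports, each of whose range is a nontrivial interval — say $f$ supported on $[0,1/2]$ with range $[0,1]$ and $g$ supported on $[1/2,1]$ with range $[0,1]$. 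Then $f(a)$ and $g(a)$ are orthogonal positive contractions in $A$, each with full spectrum $[0,1]$ (after rescaling). That already produces the required pair $a', b'$ directly from the single full-spectrum element, with no use of \ref{property:WeakerThanQE} beyond what went into Proposition~\ref{P.abelian} and Lemma~\ref{L.MASA}. So the first assertion follows quickly.

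\smallskip

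\emph{Any two such elements satisfy $a \sim_0 b$.} This is the substantive part. Given orthogonal positive contractions $a, b \in A$ each with spectrum $[0,1]$, I want to show $a \sim_0 b$, i.e. that for every $n$ there is $z_n$ with $\|z_n\| \le n$ and $\|b - z_n^* a z_n\| < 1/n$, and symmetrically. The idea is to build an element $x \in A$ with $x^*x$ close to (a function of) $b$ and $xx^*$ close to (a function of) $a$, then apply Lemma~\ref{L.Cuntz.1}, which gives exactly the $\sim_0$-type estimates for $x^*x$ versus $xx^*$. To get such an $x$: since $a$ and $b$ are orthogonal, $c = a + b$ is a positive contraction whose spectrum is $[0,1]$ and, more importantly, $\cst(c) \cong C_0((0,1])$ contains the orthogonal pair of functions corresponding to $a$ and $b$. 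Now use \ref{property:WeakerThanQE} with $F = \cst(a+b)$: embed $F$ into $A$ by the inclusion, and embed $F$ into $A^{\cU}$ by a map $\kappa$ chosen so that the function corresponding to $a$ maps to $a$ and the function corresponding to $b$ also maps to $a$ (possible because both $a$ and $b$ are, abstractly, the generator of $C_0((0,1])$ up to the identification of spectra — here Lemma~\ref{L.SpectrumType} tells us $a$ and $b$ have the same quantifier-free type). The resulting $\phi \colon A \to A^{\cU}$ sends $a$ and $b$ to the same element, so inside $A^{\cU}$ we have $\phi(a) \precsim \phi(b)$ and vice versa with uniform control; pulling the witnesses back through the ultrapower and applying Łoś gives, for each $n$, an honest $z_n \in A$ with the bound $\|z_n\| \le n$ and $\|b - z_n^* a z_n\| < 1/n$ — this is exactly condition (2) in the definition of $\sim_0$, and the symmetric condition (3) follows by the symmetric choice. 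Hence $a \sim_0 b$. Alternatively, and perhaps more directly, one can bypass \ref{property:WeakerThanQE} here: inside the single commutative algebra $\cst(a+b) \cong C_0((0,1] \sqcup (0,1])$ one can already find, for each $n$, a partial-isometry-like element realizing $a \sim_0 b$ by a continuous-functional-calculus computation, because both $a$ and $b$ are full-spectrum on their respective clopen pieces and the Cuntz comparison of $C_0((0,1])$-elements of full spectrum is governed purely by the rank function, which is the same for both.

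\smallskip

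\emph{Main obstacle.} The delicate point is the norm/size bookkeeping: $\sim_0$ demands the witnesses $z_n$ satisfy $\|z_n\| \le n^{2^0} = n$ with error $< 2^0/n = 1/n$, and I must make sure the construction — whether via \ref{property:WeakerThanQE} and a Łoś pullback, or via a direct functional-calculus estimate in the commutative corner — genuinely achieves these exact exponents rather than merely Cuntz-subequivalence with uncontrolled constants. Lemma~\ref{L.Cuntz.1} is calibrated precisely to give the $\sim_0$ bounds ($\|z_n\| \le n$, error $< 1/n$), so the cleanest path is to reduce to it: arrange $x \in A$ with $xx^* = a$ and $x^*x = b$ exactly (or as close as one likes), which is possible once $a$ and $b$ are known to have the same spectrum and one has enough room — orthogonality of $a$ and $b$ plus the absence of scalar projections is what provides this room. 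Then $a \sim_0 b$ is immediate from Lemma~\ref{L.Cuntz.1} applied to $x$ and to $x^*$.
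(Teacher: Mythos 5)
Your existence argument is fine (and in fact more direct than the paper's: functional calculus applied to a single full-spectrum positive element from Lemma~\ref{L.MASA} does produce an orthogonal pair). The problem is the ``moreover'' part, where every route you sketch breaks down. First, $a$ and $b$ do \emph{not} lie in $\cst(a+b)\cong C_0((0,1])$; the algebra they generate is $\cst(a,b)\cong C_0\bigl((0,1]\sqcup(0,1]\bigr)$, which is strictly larger. Second, an embedding is injective, so no $\phi\colon A\to A^{\cU}$ can send the distinct elements $a$ and $b$ to the same element; and even if it could, witnesses living in $A^{\cU}$ for $\phi(a)\precsim\phi(b)$ cannot be ``pulled back'' through a non-diagonal embedding to witnesses in $A$. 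Third, your proposed bypass is false: in the commutative algebra $\cst(a,b)$ two orthogonal positive elements have disjoint cozero sets and are therefore \emph{never} Cuntz subequivalent to one another there --- the witnesses for $a\sim_0 b$ must come from the ambient noncommutative algebra. Finally, your fallback ``arrange $x\in A$ with $xx^*=a$ and $x^*x=b$'' is precisely the hard content of the lemma and is simply asserted; it fails in general for orthogonal positive elements with equal spectrum (e.g.\ $(f,0)$ and $(0,f)$ in $C[0,1]\oplus C[0,1]$), so orthogonality plus absence of scalar projections does not by itself ``provide the room.''

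The missing idea is that one must start from a pair for which $\sim_0$ is \emph{automatic} and then transport it. Since $A$ is noncommutative there is $x$ with $\|x\|=1$ and $x^2=0$; after adjusting by functional calculus one may assume $\sigma(x^*x)=[0,1]$, and then $a=x^*x$ and $b=xx^*$ are orthogonal, have full spectrum, and satisfy $a\sim_0 b$ with the exact norm bounds by Lemma~\ref{L.Cuntz.1}. Given an arbitrary orthogonal full-spectrum pair $(c,d)$, Lemma~\ref{L.SpectrumType} gives $\qftp(a,b)=\qftp(c,d)$, so one may apply \ref{property:WeakerThanQE} with $F=\cst(a,b)$, $\iota$ the inclusion, and $\kappa$ sending $(a,b)$ to $(c,d)\in A\subseteq A^{\cU}$; the resulting $\phi$ pushes the witnesses for $a\sim_0 b$ forward to witnesses for $c\sim_0 d$ in $A^{\cU}$, and since $c,d$ are diagonal elements and the relevant conditions are $\inf$-formulas, elementarity of the diagonal embedding brings $c\sim_0 d$ back down to $A$. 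Note also that this is why the existence part is proved the way it is: the specific pair $(x^*x,xx^*)$ is needed as the source of the transfer, whereas your pair $f(a),g(a)$ sits inside a commutative subalgebra and carries no $\sim_0$ information.
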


\begin{proof}
Since $A$ is noncommutative, by \cite[II.6.4.14]{BlackadarOpAlg}, there is $x$ such that $\|x\|=1$ and $x^2=0$.
Then $xx^*$ and $x^*x$ are orthogonal positive elements of norm $1$. 
Since the spectra of $a=x^*x$ and $b=xx^*$ both contain $0$, they are equal (for all $x,y\in A$, $\sigma(xy)$ and $\sigma(yx)$ may only differ at $\{0\}$, see e.g., \cite[II.1.4.2]{BlackadarOpAlg}). 

Let us prove that we may assume $\sigma(a)=[0,1]$. 
If $\sigma(a)\neq [0,1]$ then by continuous functional calculus we can find a nonzero projection 
$p\in \cst(a)$. Since, by Proposition~\ref{P.abelian},  $A$ has no scalar projections, the algebra $pAp$ is infinite-dimensional 
and by Lemma~\ref{L.MASA} we can find positive $a_1\in pAp$ such that $\sigma(a_1)=[0,1]$. 
Let $x_1=xa_1$. Note that $a_1x\in pApx$ and $px=0$, hence $(x_1)^2=0$. 
Also, if we let $a_2 = x_1^*x_1$ then $a_2=a_1x^*xa_1=a_1pa_1=a_1^2$ and hence $\sigma(a_2)=[0,1]$. 
Therefore by replacing $x$ with $x_1$ and re-evaluating $a$ and $b$ 
we may assume $\sigma(a)=[0,1]$.  

If $c$ and $d$ are positive orthogonal elements with $\sigma(c)=\sigma(d)=[0,1]$ we have that $j\sigma(a,b)=j\sigma(c,d)=\{0\}\times [0,1]\cup [0,1]\times \{0\} $, so $\cst(a,b)\cong \cst(c,d)$; let $F = \cst(a,b)$. Let $\iota : F \to A$ be the inclusion map, let $\kappa\colon F\to A^{\mathcal U}$ be the embedding that sends $a$ to $c$ and $b$ to $d$, and let $\phi\colon A\to A^\cU$ be the embedding whose existence is guaranteed by \ref{property:WeakerThanQE}.  Let $x_n,y_n$ witness that $a\sim_0 b$.  Then $c\sim_0 d$ in $A^\cU$, as witnessed by $\phi(x_n)$ and $\phi(y_n)$. Since the diagonal embedding of $A$ into $A^{\mc{U}}$ is elementary, we have $\tp^A(c,d)=\tp^{A^{\cU}}(c,d)$, and in particular 
\[
\left(\inf_{\|y\|\leq n}( \|c-ydy^*\|)\right)^{A^\cU}<1/n \text{ and } \left(\inf_{\|y\|\leq n}( \|d-ycy^*\|)\right)^{A^\cU}<1/n
\]
 hence 
\[
\left(\inf_{\|y\|\leq n}( \|c-ydy^*\|)\right)^{A}<1/n \text{ and } \left(\inf_{\|y\|\leq n}( \|d-ycy^*\|)\right)^{A}<1/n.
\]

\end{proof}

A \cstar-algebra is \emph{purely infinite and simple} if it has dimension greater than 1 and for every two nonzero positive elements $a$ and $b$ we have $a\precsim b$
 (see \cite[\S 4.1]{rordam2002classification}).

\begin{prop} \label{P.Cuntz.2} Let $A$ be noncommutative and infinite-dimensional. If~$A$ satisfies \ref{property:WeakerThanQE} then $A$ is purely infinite and simple. 
\end{prop}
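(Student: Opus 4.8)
The plan is to show that for every pair of nonzero positive elements $a,b\in A$ we have $a\precsim b$, using the machinery assembled in Lemma~\ref{lem:Cuntz2} together with the absence of scalar projections (Proposition~\ref{P.abelian}) and the transfer principle \ref{property:WeakerThanQE}. The key observation is that Cuntz-subequivalence is, up to $\epsilon$, encoded in quantifier-free types: for each $n$ the condition ``there is $z$ with $\|z\|\le n$ and $\|a-zbz^*\|<1/n$'' is an infimum of a quantifier-free formula, so if $(a,b)$ and $(a',b')$ have the same quantifier-free type then $a\precsim b$ transfers from one pair to the other. Combined with the fact that the diagonal embedding $A\hookrightarrow A^{\mathcal U}$ is elementary, it suffices to produce, for arbitrary nonzero positive $a,b\in A$, a suitable copy inside $A^{\mathcal U}$ where the subequivalence is visible.

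First I would reduce to the case where $a$ and $b$ are positive contractions of full spectrum $[0,1]$. Given an arbitrary nonzero positive $a$, the hereditary subalgebra $\overline{aAa}$ is infinite-dimensional (if it were finite-dimensional it would contain a scalar projection, contradicting Proposition~\ref{P.abelian}; more carefully, a nonzero finite-dimensional hereditary subalgebra of any \cstar-algebra contains a minimal projection, which here must be scalar), so by Lemma~\ref{L.MASA} it contains a positive contraction $a'$ with $\sigma(a')=[0,1]$, and $a'\precsim a$. Since $\precsim$ is transitive, it is enough to prove $a'\precsim b'$ for full-spectrum contractions $a',b'$. So assume from now on $\sigma(a)=\sigma(b)=[0,1]$.

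Next I would manufacture the comparison inside $A^{\mathcal U}$. By Lemma~\ref{lem:Cuntz2} there are orthogonal positive contractions $e,f\in A$ of full spectrum with $e\sim_0 f$. Now $F=\cst(e)\cong C_0((0,1])\cong\cst(a)$, and I take $\iota\colon F\to A$ the inclusion and $\kappa\colon F\to A^{\mathcal U}$ the embedding sending $e$ to $a$ (viewed diagonally). Let $\phi\colon A\to A^{\mathcal U}$ be the embedding extending $\kappa$ given by \ref{property:WeakerThanQE}. Then inside $A^{\mathcal U}$ the elements $a$ and $\phi(f)$ are orthogonal positive contractions of full spectrum satisfying $a\sim_0\phi(f)$ (the relations $\sim_k$ being preserved by $\phi$ since they are type-encoded), hence in particular $a\precsim\phi(f)$ in $A^{\mathcal U}$. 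On the other hand I want $\phi(f)\precsim b$ in $A^{\mathcal U}$: since $\phi(f)$ is a full-spectrum positive contraction sitting in $A^{\mathcal U}$, and $b$ (viewed diagonally) is a nonzero positive element, I repeat the full-spectrum reduction inside $A^{\mathcal U}$ — or, more cleanly, apply Lemma~\ref{lem:Cuntz2} to the pair $(\phi(f),b')$ after cutting down $b$. Chaining these gives $a\precsim b$ in $A^{\mathcal U}$, i.e. $\inf_{\|z\|\le n}\|a-zbz^*\|<1/n$ holds in $A^{\mathcal U}$ for every $n$; by elementarity of the diagonal embedding the same infima are $<1/n$ in $A$, so $a\precsim b$ in $A$ as required. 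Finally, $\dim A>1$ since $A$ is infinite-dimensional, so $A$ is purely infinite and simple.

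The main obstacle I anticipate is the middle step — getting $\phi(f)\precsim b$ inside $A^{\mathcal U}$ for an arbitrary nonzero positive $b$, rather than for another orthogonal full-spectrum contraction. The clean way is to first replace $b$ by a full-spectrum contraction $b'\in\overline{bAb}$ (valid by the hereditary-subalgebra argument above, which works verbatim in $A^{\mathcal U}$ since $A^{\mathcal U}$ also has no scalar projections and is infinite-dimensional), then observe that $\phi(f)$ and $b'$ need not be orthogonal, so Lemma~\ref{lem:Cuntz2} does not apply directly; instead one perturbs $b'$ to a full-spectrum contraction supported in a hereditary subalgebra orthogonal to $\phi(f)$ — possible because removing a single full-spectrum contraction from an infinite-dimensional \cstar-algebra with no scalar projections still leaves an infinite-dimensional corner, by the argument in Proposition~\ref{P.abelian}. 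One then has two orthogonal full-spectrum contractions and Lemma~\ref{lem:Cuntz2} gives $\sim_0$, hence $\precsim$, and transitivity of $\precsim$ finishes the chain $a\precsim\phi(f)\precsim(\text{perturbation})\precsim b'\precsim b$. Keeping track of which relations are genuinely type-encoded (the $\sim_k$) versus which only need Cuntz-subequivalence (transitivity) is where care is needed.
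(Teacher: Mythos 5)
Your first half is sound and matches the paper: reducing to $A^{\cU}$ via elementarity of the diagonal embedding, using Lemma~\ref{lem:Cuntz2} to produce orthogonal full-spectrum positive contractions $e\perp f$ with $e\sim_0 f$, and transporting this by an embedding $\phi$ supplied by \ref{property:WeakerThanQE} so that $a=\phi(e)$ and $\phi(f)$ are orthogonal, of full spectrum, and satisfy $a\sim_0\phi(f)$. The second half, however, has two genuine gaps. First, your reduction of the \emph{source} goes the wrong way: you replace $a$ by $a'\in\overline{aAa}$ with $a'\precsim a$, but to deduce $a\precsim b$ from $a'\precsim b'$ you would need $a\precsim a'$, which fails for a generic full-spectrum element of the hereditary subalgebra (only the \emph{target} of the desired subequivalence may be shrunk this way). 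Second, and more seriously, the step $\phi(f)\precsim b$ is not established. Your proposed fix --- perturbing a full-spectrum $b'\precsim b$ into a hereditary subalgebra orthogonal to $\phi(f)$ --- breaks down because the annihilator of a positive contraction with spectrum $[0,1]$ can be trivial (already $h(t)=t$ in $C([0,1])$ annihilates nothing), and even when it is nontrivial there is no reason for it to meet $\overline{bA^{\cU}b}$. Lemma~\ref{lem:Cuntz2} genuinely only compares \emph{orthogonal} full-spectrum contractions, and orthogonality cannot always be arranged.

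The paper gets past exactly this obstruction with two further ideas that your outline is missing. Claim~\ref{c:Cuntz1} upgrades the comparison to the \emph{nested} configuration $fg=gf=g$: one routes through a triple of full-spectrum contractions in a copy of $C([0,1))$, the first orthogonal to the other two and the third nested under the second, obtaining $f\sim_1 g$ by transitivity of $\sim_0$ through the common orthogonal element; countable saturation of $A^{\cU}$ then produces the needed ``local units'' $e$ with $ae=a$ and $be=b$, which reduces the comparison of arbitrary positive contractions to the nested case. In the unital case even this is not enough --- nothing is orthogonal to $1$ --- and one needs Claim~\ref{c:Cuntz2}, which invokes Cuntz's orthogonal-sum lemma (from $d\precsim b$, $a\precsim c$ and $bc=0$ deduce $a+d\precsim b+c$) to show $1\precsim f$ for every full-spectrum contraction $f$, after which $b\le 1$ disposes of the source. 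Your proposal addresses neither the nested case nor the unital case, and that is where the substance of the proof lies.
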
 

\begin{proof}
Since being purely infinite and simple is elementary (see \cite[Theorem 2.5.1]{Muenster} or \cite{Goldbring2014}) it suffices to prove that $A^{\mathcal U}$ is purely infinite and simple. We will use the fact that for $a, b \in A$, we have $(a\precsim b)^A$ if and only if $(a\precsim b)^{A^\cU}$ (see \cite[Lemma 8.1.3]{Muenster}).

Before doing so we will need two preliminary claims.

\begin{claim}\label{c:Cuntz1}
Suppose that $f, g \in A^\mathcal U$ are positive contractions with full spectrum, and $fg=gf=g$.  Then $f\sim_1 g$.
\end{claim}

\begin{proof}
Choose elements $a$, $b$ and $c$ in $A$ such that $ab=0$ and $bc=cb=c$, each with full spectrum.  Such elements can be found in $C([0,1))$ and Lemma \ref{lem:Cuntz2} implies that 
$A$ contains a  copy of $C([0,1))$.  Then again by Lemma \ref{lem:Cuntz2} we have that $c\sim_0 a\sim_0 b$. Let $v_n,w_n\in A$ be witnessing that $a\sim_0 b$ and $y_n,z_n\in A$ be witnessing that $a\sim_0 c$.  Since the spectra of $f$ and $g$ are both $[0, 1]$ and $fg=gf=g$ we have $\qftp(f, g) = \qftp(b, c)$, hence $F=\cst(f,g)\cong \cst(b,c)$. Let $\iota \colon F \to A$ be the inclusion map and $\kappa \colon F \to A$ be the embedding sending $f$ to $b$ and $g$ to $c$.  Then by \ref{property:WeakerThanQE} there is $\phi\colon A\to A^{\mathcal U}$ an embedding making the diagram commute. It is easy to see that $\phi(v_n)$, $\phi(w_n)$ witness that $\phi(a)\sim_0 f$ and $\phi(y_n)$ and $\phi(z_n)$ witness that $\phi(a)\sim_0 g$, so $f\sim_1 g$.
\end{proof}

 \begin{claim}\label{c:Cuntz2} If $A$ is unital then 
every positive contraction $f\in A^{\mathcal U}$ with full spectrum satisfies $1 \precsim f$.
\end{claim}
\begin{proof}[Proof of Claim \ref{c:Cuntz2}]
Choose positive contractions $a, b, c$ in $A^{\mathcal U}$  each with full spectrum and such that $ab=bc=ac=0$, and let $d=1-a$. Then $\sigma(d)=[0,1]$ and $db=bd=b$, hence $d\sim_1 b$ by Claim \ref{c:Cuntz1}.  Also $a\sim_0 c$. In fact, we will only need to know that $d\precsim b$ and $a\precsim c$.  
 Since $bc=0$, by \cite[Proposition~1.1]{cuntz1978dimension} we have that $a+d\precsim  b+c$.  But $a+d=1$ and $\sigma(b+c)=[0,1]$.  In particular, $b+c$ is a positive contraction with full spectrum such that $1 \precsim b+c$. The same argument used in Claim \ref{c:Cuntz1} and in Lemma \ref{lem:Cuntz2} shows that $1\precsim f$ for every $f\in A^\cU$ as required for the claim.
\end{proof}

Now given positive contractions $a, b$ we show that $b \precsim a$. 
By Lemma~\ref{L.MASA} for every $0\leq r<1$ there exists a positive contraction $c$ in the hereditary\footnote{Recall that a subalgebra $B\subseteq A$ is hereditary if $c\leq d\in B$ implies $c\in B$ for all $c,d$ positive elements of $A$.}
subalgebra $\overline{(a-r)_+A^{\cU}(a-r)_+}$ such that $\sigma(c)=[0,1]$. By countable saturation of 
$A^{\cU}$ there is a positive contraction $d$ with full spectrum such that 
$da=d$, hence $d\precsim a$. It therefore suffices to prove $b\precsim d$, and by replacing $a$
with $d$ we may assume $\sigma(a)=[0,1]$. 

If $A^{\cU}$ is unital then by Claim~\ref{c:Cuntz2} we have $1\precsim a$ and (since $b\leq 1$) 
$b\precsim a$ follows. 

It remains to consider the case when  $A^{\cU}$ is nonunital.  
We prove that there exists a positive contraction $e$ such that $ae=a$ and $be=b$.  
Since $\lim_n \|aa^{1/n}-a\|=0$ and $a^{1/n}$ is a positive contraction  
the type of a positive contraction $c_1$ such that $ac_1=a$ is consistent and 
such contraction exists in $A^{\cU}$. For the same reason we have that there is $e$ with $(a+b)e=a+b=e(a+b)$. Let $B=\{x\mid xe=x=ex\}$. This is an hereditary \cstar-algebra of $A^{\cU}$, and since $a+b\in B$, then $a,b\in B$ as required.

Since $b\precsim e$ it suffices to prove $e\precsim a$.  
 If $\sigma(e)=[0,1]$
    this follows by  Claim~\ref{c:Cuntz1}. 
Otherwise, since $A^{\cU}$ is nonunital by countable saturation we can find a 
positive nonzero $f$ such that $fe=0$. By  Lemma~\ref{L.MASA}    we may assume $\sigma(f)=[0,1]$. 
Then $\sigma(e+f)=[0,1]$ and we can apply Claim~\ref{c:Cuntz1} to $e+f$ and complete the proof. 
\end{proof}

\subsection{$\cO_2$ and quantifier elimination}\label{S.O2}
Any  \cstar-algebra generated by $n$ isometries with orthogonal ranges with sum 1 is 
 isomorphic 
 to the Cuntz algebra $\mc{O}_n$ (\cite{Cuntz:Simple}).
 Hence $\cO_2$ is  the universal algebra defined by the relations $s^*s=t^*t=1$ and $ss^*+tt^*=1$. 
This algebra plays a pivotal role in Elliott's classification program (see \cite[Chapter 5]{rordam2002classification}).  
Notably,  $\mc{O}_2$ has some properties implied by quantifier elimination; for example, every unital embedding 
of $\cO_2$ into itself, or into any other model of its theory, is elementary (see e.g., \cite{Goldbring2014} or \cite[Proposition 2.15]{SSA}).  Nevertheless, we show below that $\mc{O}_2$ does not have quantifier elimination.

Our main goal in this section is to prove the following Theorem whose proof 
 extends ideas  used in the proof that stably finite exact \cstar-algebras 
are not necessarily embeddable into a stably finite nuclear \cstar-algebra  (see the discussion preceding 
\cite[Corollary~4.2.3]{Brown:Invariant}).

\begin{thm}\label{thm:wrongembeddings}
If $A$ is a separable, infinite-dimensional, noncommutative \cstar-algebra  then $A$ does not satisfy \ref{property:WeakerThanQE}. In particular, it does not have quantifier elimination.
\end{thm}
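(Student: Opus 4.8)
The plan is to argue by contradiction: assume $A$ satisfies \ref{property:WeakerThanQE}. By Proposition~\ref{P.Cuntz.2} (which rests on Proposition~\ref{P.abelian} and Lemma~\ref{lem:Cuntz2}), $A$ is purely infinite and simple, hence has no scalar projections; since purely infinite simplicity is elementary, $A^{\cU}$ is purely infinite and simple as well, and in particular (by Cuntz's computation of $K_0$ of purely infinite simple algebras) every class in $K_0(A^{\cU})$ is realised by a projection in $A^{\cU}$. I will exhibit a finitely generated \cstar-algebra $F$ and embeddings $\iota\colon F\to A$ and $\kappa\colon F\to A^{\cU}$ such that no embedding $\phi\colon A\to A^{\cU}$ satisfies $\phi\circ\iota=\kappa$, contradicting \ref{property:WeakerThanQE}. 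The guiding principle is that such a $\phi$ induces a group homomorphism $\phi_*$ on $K$-theory, which therefore sends the zero class to the zero class, while $F,\iota,\kappa$ will be chosen to make that impossible. I expect the argument to split according to whether $K_*(A^{\cU})$ vanishes.

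Suppose first $K_0(A^{\cU})\neq 0$. Since the unit of a purely infinite simple algebra (or any nonzero projection in it) is properly infinite, $A$ contains a nonzero projection $p$ with $[p]_0=0$, and, when $A$ is unital, with $p\neq 1_A$; and $A^{\cU}$ contains a nonzero projection $p'$ with $[p']_0\neq 0$, which can be taken proper when $A^{\cU}$ is unital. In the nonunital language take $F=\mbb C$, $\iota(z)=zp$, $\kappa(z)=zp'$; in the unital language take $F=\mbb C^2$, with $\iota$ determined by $p$ and $\kappa$ by $p'$. In either case $\phi\circ\iota=\kappa$ would give $0=\phi_*([p]_0)=[p']_0$, a contradiction. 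If instead $K_0(A^{\cU})=0$ but $K_1(A^{\cU})\neq 0$, run the same idea with $K_1$ and $C(\mathbb T)$: by Lemma~\ref{L.MASA} there is a self-adjoint contraction of full spectrum in $A$ (or, if $A$ is nonunital, in a unital corner $qAq$, which again has no scalar projections), and its exponential is a unitary $u$ of full spectrum with $[u]_1=0$; let $\iota$ be the embedding of $C(\mathbb T)$ it induces (recall from Lemma~\ref{L.SpectrumType} that full spectrum determines the quantifier-free type), and let $\kappa$ send the generator to a unitary of full spectrum and nonzero $K_1$-class in $A^{\cU}$, obtained as a direct sum of a unitary of the prescribed class in one corner and a full-spectrum exponential in a complementary corner. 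Again $\phi\circ\iota=\kappa$ would force $\phi_*(0)=[u']_1\neq 0$.

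There remains the case $K_*(A^{\cU})=0$, where the $K$-theoretic invariants are useless; when $A$ is separable and nuclear this case forces $A\cong\cO_2$, and in general $A$ is ``$\cO_2$-like.'' Here I would use exactness, extending the ideas in the discussion preceding \cite[Corollary~4.2.3]{Brown:Invariant}. Take $F$ to be a finitely generated \emph{exact but non-nuclear} \cstar-algebra that embeds into $A$ --- when $A\cong\cO_2$ one may take $F=\cst_\lambda(\bbF_2)$, which embeds into $\cO_2$ by Kirchberg's embedding theorem --- fix any embedding $\iota\colon F\to A$, and construct an embedding $\kappa\colon F\to A^{\cU}$ that is \emph{not} approximately unitarily equivalent to $\iota$ followed by the diagonal embedding $A\to A^{\cU}$. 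Since $\cO_2$ is strongly self-absorbing, any two embeddings $A\to A^{\cU}$ are approximately unitarily equivalent, hence, by countable saturation of $A^{\cU}$, unitarily equivalent; so $\phi\circ\iota=\kappa$ would make $\kappa$ unitarily equivalent to the diagonal copy of $\iota$, contrary to the choice of $\kappa$. Producing this exotic $\kappa$ is the technical heart of the proof and the step I expect to be the main obstacle: non-nuclearity of $F$ is essential, since for nuclear $F$ one has $KK(F,\cO_2)=0$ and all embeddings of $F$ into $\cO_2^{\cU}$ are unitarily conjugate, and obtaining the inequivalent embedding amounts to adapting Brown's construction of a separable exact \cstar-algebra carrying a ``non-amenable'' tracial behaviour that cannot be absorbed; the passage from $\cO_2$ to a general, possibly non-nuclear, $\cO_2$-like $A$ will require additional care, for instance by first reducing to a unital full corner.
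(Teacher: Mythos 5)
Your $K$-theoretic reductions are fine as far as they go (in a purely infinite simple algebra one can indeed find a nonzero projection of trivial $K_0$-class and, if $K_0(A^{\cU})\neq 0$, one of nontrivial class, and no $^*$-homomorphism $\phi$ with $\phi\circ\iota=\kappa$ can exist because $\phi_*$ kills the zero class), but they only dispose of the easy cases. The case you correctly flag as ``the main obstacle'' --- $K_*(A^{\cU})=0$, which contains the single most important instance $A=\cO_2$ --- is exactly where your argument has a genuine gap, and the mechanism you propose for it does not work. You want to produce an embedding $\kappa$ of $\cst_r(\bbF_2)$ into $A^{\cU}$ that is not approximately unitarily equivalent to the diagonal copy of $\iota$, and then conclude via ``any two embeddings $A\to A^{\cU}$ are approximately unitarily equivalent.'' That last statement is only available when $A$ is $\cO_2$ (or at least strongly self-absorbing); for a general separable, possibly non-nuclear, purely infinite simple $A$ satisfying \ref{property:WeakerThanQE} there is no such uniqueness theorem, and even in the $\cO_2$ case you have not said how to certify that your $\kappa$ is genuinely inequivalent --- which is the entire content of the step.

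The paper's proof avoids both problems by using a different, non-$K$-theoretic and case-free obstruction: approximation properties rather than unitary equivalence classes. One embeds $F=\cst_r(\bbF_2)$ into $A$ through a copy of $\cO_2$ (Kirchberg--Phillips exact embedding plus Lemma~\ref{lem:O_2Embeds}), and separately embeds $F$ into $\prod_{\cU}M_n(\bbC)\subseteq \cO_2^{\cU}\subseteq A^{\cU}$ using the Haagerup--Thorbj{\o}rnsen theorem that $\cst_r(\bbF_2)$ is MF; this is the $\kappa$. If $\phi\colon A\to A^{\cU}$ extended $\kappa$, then restricting $\phi$ to the nuclear algebra $\cO_2$ and applying the Choi--Effros lifting theorem yields a completely positive contractive lift to $\ell_\infty(A)$, and compressing by the conditional expectations $A\to M_n(\bbC)$ produces a c.p.c.\ lift of the MF embedding of $F$ --- i.e., it would make $\cst_r(\bbF_2)$ quasidiagonal, contradicting Rosenberg's theorem that quasidiagonality of $\cst_r(\bbF_2)$ forces amenability of $\bbF_2$. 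Note the two ingredients you are missing: the specific choice of $\kappa$ through the matrix ultraproduct, and the fact that the obstruction to extending it is quasidiagonality (an existence statement about c.p.c.\ lifts, extracted via nuclearity of the intermediate $\cO_2$), not inequivalence of unitary orbits. With these in hand the $K$-theoretic case division becomes unnecessary: the same argument kills every separable infinite-dimensional noncommutative $A$ at once.
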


\begin{lem}\label{lem:O_2Embeds}
Let $A$ be an  infinite-dimensional noncommutative \cstar-algebra that satisfies \ref{property:WeakerThanQE}. 
  Then $\mc{O}_2$ embeds in $A$.
\end{lem}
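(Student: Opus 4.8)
The plan is to show that any infinite-dimensional noncommutative \cstar-algebra $A$ satisfying \ref{property:WeakerThanQE} must contain a unital copy of $\mc{O}_2$. By Proposition~\ref{P.Cuntz.2}, such an $A$ (or at least $A^{\cU}$) is purely infinite and simple. The natural strategy is to build the two defining isometries of $\mc{O}_2$ directly inside $A^{\cU}$ using the Cuntz subequivalence machinery already developed, and then use \ref{property:WeakerThanQE} together with the fact that $\mc{O}_2$ is finitely generated and the set of such pairs of isometries has a quantifier-free-definable type to push the copy back down into $A$ itself (or, if $A$ is merely claimed to contain $\mc{O}_2$, conclude from $A^{\cU}$ via countable saturation and the commuting-square property).

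First I would use Lemma~\ref{lem:Cuntz2} to locate orthogonal positive contractions $a,b \in A$ with full spectrum and $a \sim_0 b$. Next, working in $A^{\cU}$ (which is purely infinite and simple by Proposition~\ref{P.Cuntz.2}), I would use the standard fact that in a purely infinite simple \cstar-algebra, if $a \precsim b$ with $b$ orthogonal to $a$ then one can produce an isometry-like element: concretely, if $p$ and $q$ are equivalent projections (or suitable positive elements) with orthogonal supports, there is a partial isometry $v$ with $v^*v = p$, $vv^* \le q$. The Cuntz-equivalence relations $a \sim_0 b$ (hence $a \precsim b$ and $b \precsim a$), upgraded via the $\sim_k$ bookkeeping and Claims~\ref{c:Cuntz1} and~\ref{c:Cuntz2}, give the Murray--von Neumann equivalence of two orthogonal "full" projections in $A^{\cU}$, and from two orthogonal subprojections of $1$ that are each equivalent to $1$ one reads off two isometries $s,t$ with $ss^* + tt^* \le 1$; a further application of pure infiniteness (or of Claim~\ref{c:Cuntz2}'s conclusion $1 \precsim f$ for full $f$) lets us arrange $ss^* + tt^* = 1$, giving a unital copy of $\mc{O}_2 = \cst(s,t)$ inside $A^{\cU}$.

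To descend to $A$ I would argue as in the proofs of Lemma~\ref{lem:Cuntz2} and Claim~\ref{c:Cuntz1}: the pair $(a,b)$ of orthogonal full positive contractions with $a \sim_0 b$ has a quantifier-free type that, by Lemma~\ref{L.SpectrumType} and the $\sim_k$ encoding, already determines the relevant $\exists$-type witnessing all the Cuntz subequivalences needed to build $s$ and $t$; since $\cst(a,b) \hookrightarrow A$ and $\cst(a,b) \hookrightarrow A^{\cU}$ (the latter landing inside the $\mc{O}_2$ we constructed), \ref{property:WeakerThanQE} furnishes an embedding $\phi \colon A \to A^{\cU}$ extending the second map. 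Then $\phi$ transports the isometry relations back: the elements exhibiting that $a \sim_0 b$ in $A$ map to elements exhibiting it in $A^{\cU}$, and by elementarity of the diagonal embedding $A \to A^{\cU}$, the $\inf$-statements witnessing the Cuntz relations and the existence of the isometries reflect down to $A$. Assembling these, $\cst$ of the reflected isometries is a unital $\mc{O}_2$ inside $A$.

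The main obstacle I anticipate is the bookkeeping needed to turn the "soft" relations $a \sim_0 b$ (approximate Cuntz equivalence with norm and norm-of-witness control) into genuine isometries with exact relations $s^*s = t^*t = 1$, $ss^* + tt^* = 1$: this requires either invoking standard structural results on purely infinite simple algebras (e.g. that every nonzero hereditary subalgebra contains an infinite projection, and that $[1] = [p] + [q]$ for suitable orthogonal full $p,q$) and checking those are available in $A^{\cU}$, or running a saturation argument to solve the exact isometry relations from the approximate ones — the type asserting "$s,t$ are isometries with orthogonal ranges summing to $1$" must be shown consistent over the quantifier-free data, which is where countable saturation of $A^{\cU}$ and the results of Section~\ref{sec:Noncommutative} do the real work. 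Once that consistency is in hand the reflection via \ref{property:WeakerThanQE} is routine.
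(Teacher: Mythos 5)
There is a genuine gap at the step where you ``arrange $ss^* + tt^* = 1$'' by ``a further application of pure infiniteness.'' Pure infiniteness and simplicity alone do not yield a unital copy of $\cO_2$: if $s,t$ are isometries with $ss^*+tt^*=1$ then $[1]=2[1]$ in $K_0$, hence $[1]=0$, and this fails for instance in $\cO_3$, which is purely infinite and simple but has $[1]$ a generator of $K_0(\cO_3)\cong\bbZ/2\bbZ$. So the exact final relation is obstructed by $K$-theory, and this is precisely the point where \ref{property:WeakerThanQE} must be invoked again, beyond its role in Proposition~\ref{P.Cuntz.2}. The paper's proof handles it as follows: since being purely infinite and simple is elementary, Proposition~\ref{P.Cuntz.2} gives that $A$ \emph{itself} is purely infinite and simple, so every projection $p\in A$ is properly infinite and there are $s,s_1$ with $s^*s=s_1^*s_1=p$, $ss^*$ orthogonal to $s_1s_1^*$, and $ss^*+s_1s_1^*\le p$; one then observes that under \ref{property:WeakerThanQE} whether a commuting pair of projections consists of Murray--von Neumann equivalent projections depends only on its joint spectrum (i.e.\ its quantifier-free type), and applying this to the pairs $(ss^*,s_1s_1^*)$ and $(ss^*,p-ss^*)$ yields $ss^*\sim p-ss^*$, which kills the obstruction and produces the second isometry $t$ with $t^*t=p$, $tt^*=p-ss^*$. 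Your proposal never isolates this idea, and without it the construction of a unital $\cO_2$ (which is what Theorem~\ref{thm:wrongembeddings} actually uses when $A$ is unital) does not go through.

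A secondary problem is the ``descent'' from $A^{\cU}$ to $A$. \ref{property:WeakerThanQE} produces embeddings $A\to A^{\cU}$, the wrong direction for transporting a copy of $\cO_2$ built in $A^{\cU}$ back into $A$; elementarity of the diagonal embedding only reflects approximate existential statements, so to land an \emph{exact} copy of $\cO_2$ in $A$ you would additionally need weak stability (semiprojectivity) of the defining relations of $\cO_2$, which you neither state nor justify. The detour is avoidable: as noted above, Proposition~\ref{P.Cuntz.2} already gives pure infiniteness and simplicity of $A$ itself, so the whole construction can, and in the paper does, take place inside $A$ (with projections and partial isometries rather than your positive contractions of full spectrum and the $\sim_0$ bookkeeping, which are not needed here). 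Your hedge ``or at least $A^{\cU}$'' signals that this point was missed.
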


\begin{proof} We write $p\sim q$ if $p$ and $q$ are Murray--von Neumann equivalent projections
and note that $p\sim q$ in $A$ if and only if $p\sim q$ in $A^{\cU}$. Using \ref{property:WeakerThanQE} we see that if $(p_1,q_1)$ and $(p_2,q_2)$ are commuting pairs of projections with $p_1\sim q_1$ and $j\sigma(p_1,q_1)=j\sigma(p_2,q_2)$, then $p_2\sim q_2$.
By Proposition~\ref{P.Cuntz.2} we know that $A$ is purely infinite and simple.
By \cite[Proposition~4.1.1 (iii)]{rordam2002classification} every projection $p$ in $A$ is
 \emph{properly infinite}, meaning that there are partial isometries $s$ and $s_1$ satisfying $s^*s=s_1^*s_1=p$ but $s_1s_1^*$ and $ss^*$
are orthogonal and such that $ss^*+s_1s_1^*\leq p$.  Therefore $A$ contains orthogonal Murray--von Neumann equivalent projections $ss^*$ and 
$s_1s_1^*$ and by the above $ss^*\sim p-ss^*$. By transitivity $p-ss^*\sim p$, 
and if  $t$ is such that $t^*t=p$ and $tt^*=p-ss^*$ then $s$ and $t$ are generators of a unital 
copy of $\cO_2$   in $pAp$. To prove the second assertion note that, if $A$ is unital $p$ can be chosen to be $1$.
\end{proof}

In the proof of Theorem~\ref{thm:wrongembeddings} we will make use of the reduced group \cstar-algebra $C_r^*(\mbb F_2)$ of the free group on two generators $\bbF_2$, constructed from the left regular representation $\lambda$ of $\bbF_2$ on $\ell^2(\bbF_2)$. For more information on the construction of the reduced \cstar-algebra of a group we refer to \cite[II.10.2.5]{BlackadarOpAlg}.  The algebra $C_r^*(\mbb F_2)$ is exact (see  \cite[p. 453, 1., 1-3]{Kirch93}, or  \cite[Proposition~5.1.8]{BrOz:C*}) and therefore embeds into $\mc{O}_2$ (see \cite{KircPhi:Embedding}).

\begin{proof}[Proof of Theorem~\ref{thm:wrongembeddings}] 
By Lemma \ref{lem:O_2Embeds}, $\cO_2$ embeds into $A$. If $A$ is unital, such an embedding can be chosen to be unital.
Let $\mc{U}$ be a nonprincipal ultrafilter on $\mathbb{N}$.  Each $M_n(\mathbb{C})$ embeds into $\cO_2$, and hence also embeds into $A$.  We therefore have an  embedding of the ultraproduct $M=\prod_{\mathcal U} M_n(\mathbb{C})$ inside $\cO_2^{\cU}\subseteq A^{\mathcal U}$, denoted by $\iota_2\colon M\to A^{\mathcal U}$. As before, if $A$ is unital, so is $\iota_2$.
Let $F=\cst_r(\bbF_2)\subseteq\mathcal O_2$. By \cite[Theorem~B]{HaagThor05}, $F$ is MF, that is, $F$ embeds into $M$ (see, for example, \cite[V.4.3.6]{BlackadarOpAlg}), and we can fix a unital embedding $\psi\colon F\to M$. Let $\kappa=\iota_2\circ \psi$. $\kappa$ is a $^*$-homomorphism from $F$ into $A^{\cU}$. 

We claim that $\kappa$ cannot be extended to an embedding $\phi$ of $\cO_2$ into $A^{\cU}$
(and in particular it cannot be extended to an embedding of $A$ into $A^{\cU}$), as \ref{property:WeakerThanQE} would require. 
Otherwise, by the nuclearity of $\cO_2$ and the Choi--Effros lifting theorem (\cite{choi1976completely}) 
there exists a  completely positive contraction
 $\psi'\colon \cO_2\to \ell_\infty(A)$ such that $\phi=\pi\circ\psi'$, where $\pi\colon \ell_\infty(A)\to A^{\cU}$ 
is the quotient map. 

Since each $M_n(\bbC)$ is an injective von Neumann algebra, by \cite[Proposition IV.2.1.4]{BlackadarOpAlg} there are coordinatewise conditional expectations $\theta_n\colon A\to M_n(\bbC)$.  Let $\theta\colon \ell_{\infty}(A)\to \prod M_n(\bbC)$
 be the conditional expectation induced by the 
 expectations~$\theta_n$. Then $\theta\circ\psi'$  a completely positive contraction, hence
 \[
 \theta\circ\psi'\colon F\to\prod M_n(\bbC)
 \] 
 is a completely positive contractive lifting for $\kappa$.  
We have therefore constructed an embedding of $\cst_r(\bbF_2)$ into $M$ with a completely positive contractive lifting. 
\cstar-algebras with this property are said to be quasidiagonal (see \cite{brown2000quasidiagonal}). 
However, by a result of Rosenberg (\cite[Corollary~7.1.18]{BrOz:C*}) 
quasidiagonality of $\cst_r(\mbb{F}_2)$ implies amenability of the nonamenable group $\mbb{F}_2$.  This contradiction concludes the proof. 
 \end{proof}

 \begin{thm}\label{T0}
The only \cstar-algebras with quantifier elimination in the language of unital \cstar-algebras are $\bbC$, $\bbC^2$, $M_2(\bbC)$ and $C(2^{\bbN})$. 
\end{thm}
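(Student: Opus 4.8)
The plan is to assemble the pieces already proven. We want to classify all unital \cstar-algebras whose theory has quantifier elimination in the language of unital \cstar-algebras. First, note that $\bbC$, $\bbC^2$ and $M_2(\bbC)$ have quantifier elimination by Theorem~\ref{T.Finite}, and $C(2^\bbN)$ has quantifier elimination by \cite[Theorem~5.26]{EagleVignati} (via Proposition~\ref{prop:OmegaCategoricalQE}). So it remains to show these are the only ones. Since quantifier elimination is a property of the theory and these four algebras are pairwise non-elementarily-equivalent, it suffices to show: if $A$ is a unital \cstar-algebra with quantifier elimination (equivalently, satisfying the unital version of \ref{property:WeakerThanQE}, by Proposition~\ref{P.QE} — note quantifier elimination passes to any model of $\op{Th}(A)$, so we may as well assume $A$ is separable), then $A\equiv\bbC$, $\bbC^2$, $M_2(\bbC)$ or $C(2^\bbN)$.

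The argument splits into the finite-dimensional and infinite-dimensional cases. If $A$ is finite-dimensional, then Theorem~\ref{T.Finite} (equivalence of (3) and (2)) immediately gives $A\cong\bbC$, $\bbC^2$ or $M_2(\bbC)$, and we are done. So suppose $A$ is infinite-dimensional. If $A$ is noncommutative, then since quantifier elimination for $\op{Th}(A)$ gives quantifier elimination for the separable models of $\op{Th}(A)$, we may pick a separable, infinite-dimensional, noncommutative model $A'$ of $\op{Th}(A)$; then $A'$ satisfies \ref{property:WeakerThanQE} by Proposition~\ref{P.QE}, contradicting Theorem~\ref{thm:wrongembeddings}. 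Hence $A$ must be commutative and infinite-dimensional.

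It remains to handle the commutative infinite-dimensional case and show $A\equiv C(2^\bbN)$. Here one writes $A\cong C(X)$ for $X$ a compact Hausdorff space (by Gelfand--Naimark), with $X$ infinite since $A$ is infinite-dimensional. The key points: quantifier elimination for $A$ implies, via Lemma~\ref{L.SpectrumType}, that any two elements (single commuting normals) with the same spectrum have the same type, and one leverages this together with \ref{property:WeakerThanQE} to force $X$ to be zero-dimensional with no isolated points. Indeed, if $X$ had an isolated point then $\bbC$ would embed unitally onto a scalar-projection-supported summand while also embedding non-scalarly (or one argues via nontrivial projections of different "sizes" as in Theorem~\ref{T.Finite}(3)$\Rightarrow$(1)), and if $X$ had a nontrivial connected component then by Lemma~\ref{L.MASA}-type reasoning $C(X)$ contains $C([0,1])$, giving an element of full spectrum $[0,1]$; but one can then produce another element with spectrum $[0,1]$ witnessing a distinct type (e.g.\ using the failure of a projection to lie below it, exactly as in the proof of Proposition~\ref{P.abelian}), contradicting Lemma~\ref{L.SpectrumType}. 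A zero-dimensional compact metrizable space with no isolated points is the Cantor set, so the separable model of $\op{Th}(A)$ is $C(2^\bbN)$; by Keisler--Shelah (or simply because $\op{Th}(C(2^\bbN))$ is complete with $C(2^\bbN)$ as a model) we get $A\equiv C(2^\bbN)$.

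The main obstacle is the last step: ruling out commutative infinite-dimensional algebras other than $C(2^\bbN)$, i.e.\ showing $X$ must be zero-dimensional and perfect. The connectedness obstruction is the delicate part — one needs to exhibit, inside $C(X)$ with $X$ containing an arc, two positive elements of spectrum $[0,1]$ that are distinguished by a quantifier-free-definable condition yet would be forced to have the same type by quantifier elimination; the cleanest route is via the projection-domination trick from Proposition~\ref{P.abelian}, applied after using \ref{property:WeakerThanQE} to move between an element supported on the connected component and one that is not. I would present this as the final portion of the proof, citing Proposition~\ref{P.abelian}, Lemma~\ref{L.MASA}, and Lemma~\ref{L.SpectrumType} as the engines, and note that it also uses the known fact (Proposition~\ref{prop:OmegaCategoricalQE} plus \cite{EagleVignati}) that $C(2^\bbN)$ itself does have quantifier elimination, so that no contradiction arises in that case.
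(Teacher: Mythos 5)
Your reduction to three cases (finite-dimensional, noncommutative infinite-dimensional, commutative infinite-dimensional) matches the paper's proof, and the first two cases are handled exactly as the paper does, via Theorem~\ref{T.Finite} and Theorem~\ref{thm:wrongembeddings}. The genuine gap is in the commutative infinite-dimensional case, specifically when the spectrum $X$ is connected. The argument you sketch there --- produce two positive elements of spectrum $[0,1]$ with the same quantifier-free type but different types, ``using the failure of a projection to lie below it, exactly as in the proof of Proposition~\ref{P.abelian}'' --- cannot be run when $X$ is a continuum, because then $C(X)$ contains no nontrivial projections at all: there is no scalar projection to invoke, nothing for a projection to lie below, and no analogue of the element $h=\frac{f+p+g}{2}$ that the paper's Lemma~\ref{P.A.1} uses as its second witness. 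The projection-based formula $\psi(x)=\inf_{q=q^*=q^2}\max\{\norm{qx},\norm{(1-q)(1-x)}\}$ driving Lemma~\ref{P.A.1} only disposes of the case where $X$ is disconnected but not totally disconnected.

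When $X$ is connected the paper has to work substantially harder, and even then does not finish the job internally: Theorem~\ref{T.A.1} distinguishes ``peak functions'' from ``volcano functions'' by a formula measuring approximate decomposability into two orthogonal positive pieces of norm close to $1$, and this only proves that $X$ must be an \emph{indecomposable} continuum --- for an indecomposable continuum every connected open subset is dense and the peak/volcano dichotomy collapses. The remaining case, $C(X)$ for $X$ an indecomposable continuum, is not handled in this paper at all; it is delegated to the external result \cite[Corollary~3.4]{EagleGoldVig}. So the step you flag as ``the delicate part'' is not merely delicate: it does not follow from the tools you cite (Proposition~\ref{P.abelian}, Lemma~\ref{L.MASA}, Lemma~\ref{L.SpectrumType}), and your proof is incomplete without either reproducing the appendix's peak/volcano argument together with the cited external theorem, or supplying a genuinely new argument for continua.
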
 

 \begin{proof}
That the given list includes all finite-dimensional examples is Theorem \ref{T.Finite}, and that the list includes all noncommutative examples is Theorem \ref{thm:wrongembeddings}. 
Every separable \cstar-algebra elementarily equivalent to $C(\beta\bbN\setminus \bbN)$ is isomorphic to $C(X)$ for a compact metrizable 0-dimensional space $X$ without isolated points, and therefore isomorphic to $C(2^{\bbN})$. 
In  \cite[Theorem 5.26]{EagleVignati}, 
it was proved that $C(\beta\bbN\setminus \bbN)$ (and therefore $C(2^\bbN)$)
admits 
quantifier elimination. In the appendix (written with D.C. Amador, B. Hart, J. Kawach, and S. Kim), we show that if there is a commutative example not on our list then it is of the form $C(X)$ for an indecomposable continuum~$X$. Finally, in \cite[Corollary 3.4]{EagleGoldVig} it was proved that no such commutative example exists.  
\end{proof} 

We now focus on the nonunital case. The noncommutative case follows almost directly from Theorem \ref{thm:wrongembeddings}:

\begin{thm}\label{C0}
Every noncommutative \cstar-algebra fails the nonunital variant of~\ref{property:WeakerThanQE}. 
Therefore no  noncommutative \cstar-algebra admits elimination of quantifiers in the language of \cstar-algebras without a symbol for a unit. 
\end{thm}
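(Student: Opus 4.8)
The plan is to obtain this as a short consequence of Theorem~\ref{thm:wrongembeddings} and the finite-dimensional analysis of Section~\ref{section:findim}, together with a routine reduction to the separable case. First I would establish the first assertion, which, since \ref{property:WeakerThanQE} is stated for separable algebras, concerns separable noncommutative $A$. If $A$ is finite-dimensional then $A\not\cong\bbC$, and the characterization just after Theorem~\ref{T.Finite}---according to which $\bbC$ is the only finite-dimensional \cstar-algebra with quantifier elimination in the language without a unit, and equivalently the only one satisfying the nonunital version of \ref{property:WeakerThanQE}---already gives that $A$ fails the nonunital version of \ref{property:WeakerThanQE}. This is precisely where $M_2(\bbC)$, and the other finite-dimensional noncommutative algebras that do appear on the unital list, must be excluded. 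If instead $A$ is infinite-dimensional, then Theorem~\ref{thm:wrongembeddings}, applied in the language of \cstar-algebras without a symbol for a unit, says exactly that $A$ fails the nonunital version of \ref{property:WeakerThanQE}. In either case, by Proposition~\ref{P.QE} together with the reduction to countably saturated models discussed in the remark following its proof (which lets $A^{\cU}$ serve as the required elementary extension), $A$ cannot have quantifier elimination in the unitless language.

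For the second assertion I would remove separability. Suppose $A$ is an arbitrary noncommutative \cstar-algebra whose theory admits quantifier elimination in the language without a unit. By the downward L\"owenheim--Skolem theorem $A$ has a separable elementary substructure $A_0$, and $\op{Th}(A_0)=\op{Th}(A)$ again admits quantifier elimination. Since the sentence $\sup_{\norm{x}\leq 1}\sup_{\norm{y}\leq 1}\norm{xy-yx}$ takes the value $0$ exactly on commutative algebras, noncommutativity is preserved under elementary equivalence, so $A_0$ is noncommutative; this is the only step where non-separability needs a thought, and the only place where first-orderness of the hypothesis is used. But $A_0$ is then a separable noncommutative \cstar-algebra with quantifier elimination in the unitless language, contradicting the first part.

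I do not expect any real obstacle. All of the analytic content---the embedding of $\mc{O}_2$ (Lemma~\ref{lem:O_2Embeds}), the Cuntz-subequivalence bookkeeping forcing pure infiniteness and simplicity (Proposition~\ref{P.Cuntz.2}), and the quasidiagonality obstruction for $\cst_r(\bbF_2)$---was already carried out in the proof of Theorem~\ref{thm:wrongembeddings}. The only matters requiring care are (i) confirming that the proof of Theorem~\ref{thm:wrongembeddings} uses only the nonunital version of \ref{property:WeakerThanQE}, which it does, since its structural lemmas are phrased for algebras satisfying \ref{property:WeakerThanQE} in whichever language is in force and unitality of an embedding of $\mc{O}_2$ is invoked only in the optional clause treating unital $A$, which is not needed here; and (ii) remembering to invoke the finite-dimensional characterization so that the finite-dimensional noncommutative algebras, in particular $M_2(\bbC)$, are covered.
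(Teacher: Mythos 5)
Your proposal is correct and follows essentially the same route as the paper: reduce to Theorem~\ref{thm:wrongembeddings} in the infinite-dimensional case and to the finite-dimensional classification of Section~\ref{section:findim} otherwise (the paper quotes Theorem~\ref{T.Finite} and then disposes of $M_2(\bbC)$ by exhibiting rank~$1$ and rank~$2$ projections, which is the same ``two embeddings of $\bbC$'' obstruction you invoke via the proposition following Theorem~\ref{T.Finite}). Your explicit L\"owenheim--Skolem reduction for nonseparable algebras is a point of care the paper leaves implicit, but it changes nothing essential.
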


\begin{proof} Assume $A$ satisfies the nonunital version of 
\ref{property:WeakerThanQE}. 
By Theorem \ref{thm:wrongembeddings} $A$  is finite-dimensional. 
  By  Theorem~\ref{T.Finite} 
 we have $A=M_2(\bbC)$, but 
 $M_2(\bbC)$ has projections of ranks 1 and 2,
and therefore clearly fails the nonunital version of \ref{property:WeakerThanQE}. 
\end{proof} 

In case of nonunital abelian \cstar-algebras, we have the following.




\begin{prop} \label{P.C.C-0} 
The theories of  $\bbC$ and $C_0(2^{\bbN}\setminus \{0\})$ admit quantifier elimination in the  language 
of \cstar-algebras without a symbol for a unit. 
\end{prop}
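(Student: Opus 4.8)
The plan is to handle the two algebras separately, using the quantifier elimination tests from Section~\ref{sec:Prelim}, but in both cases the strategy is to verify condition (2) of Proposition~\ref{P.QE} (or its separable incarnation~\ref{property:WeakerThanQE}) in the nonunital language. For $\bbC$ this is immediate: the only nonzero finitely-generated sub-\cstar-algebra of any model of $\op{Th}(\bbC)$ is $\bbC$ itself (every element is a scalar multiple of the unit, and a self-adjoint generator with connected spectrum forces a one-point spectrum by Lemma~\ref{L.SpectrumType} applied inside $\bbC$), and the zero subalgebra is generated by the empty tuple; so any embedding of a finitely-generated substructure of a model $A\equiv\bbC$ into another model $B\equiv\bbC$ is either the empty map or an isomorphism, and in either case extends. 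One should note that $\op{Th}(\bbC)$ in the nonunital language is not the same theory as in the unital language — a model need not be unital — but every model is still abelian and one-dimensional-on-generators in the relevant sense; I would spell out that $C_0(X)\models\op{Th}(\bbC)$ forces $X$ to be a single point, e.g.\ via an elementary sentence asserting that every self-adjoint contraction is within $\e$ of a scalar.

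For $C_0(2^{\bbN}\setminus\{0\})$ the natural route is to transfer the known unital result for $C(2^{\bbN})$ (Theorem~5.26 of~\cite{EagleVignati}, quoted in the proof of Theorem~\ref{T0}). The key observation is that $C_0(2^{\bbN}\setminus\{0\})$ is the (unique up to iso) maximal ideal of codimension one in $C(2^{\bbN})$, obtained by deleting one isolated-point-free... no: $0$ is not isolated in $2^\bbN$, so $C_0(2^\bbN\setminus\{0\})$ is an essential, non-unital, but \emph{$\sigma$-unital} ideal whose unitization is $C(2^\bbN)$. I would show that a nonunital \cstar-algebra $A$ is elementarily equivalent to $C_0(2^\bbN\setminus\{0\})$ if and only if $A$ is abelian, its spectrum is locally compact, zero-dimensional, with no isolated points and noncompact — equivalently $A\cong C_0(Y)$ with $Y$ homeomorphic to $2^\bbN$ minus a point — using the fact (as in the proof of Theorem~\ref{T0}) that these topological properties are elementary and that a separable such $C_0(Y)$ is determined up to isomorphism. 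Then, to verify condition~\ref{property:WeakerThanQE} (nonunital version), I take a finitely-generated $F$ and embeddings $\iota\colon F\to A$, $\kappa\colon F\to A^{\cU}$; since $F$ is abelian, $F\cong C_0(Z)$ for a compact-or-locally-compact metrizable zero-dimensional $Z$, and I pass to unitizations: $\widetilde F\hookrightarrow\widetilde A$, $\widetilde F\hookrightarrow\widetilde{A^\cU}=(\widetilde A)^\cU$ (the unitization commutes with the ultrapower up to the obvious identification), where $\widetilde A\equiv C(2^\bbN)$ when $A\equiv C_0(2^\bbN\setminus\{0\})$. Quantifier elimination for $C(2^\bbN)$ gives an embedding $\widetilde A\to(\widetilde A)^\cU$ extending the unitized $\kappa$; one then checks this embedding restricts to $A\to A^\cU$ (it sends the ideal into the ideal because it sends the distinguished adjoined unit to the distinguished adjoined unit, hence preserves the "vanishing at the point" ideal, which is definable as the kernel of the character picked out by that unit) and still extends $\kappa$.

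Alternatively, and perhaps more cleanly, one can use Proposition~\ref{prop:OmegaCategoricalQE}: the theory of $C_0(2^\bbN\setminus\{0\})$ is $\omega$-categorical (its separable model is unique by the topological classification above), so it suffices to show near ultrahomogeneity, i.e.\ that two tuples in $C_0(2^\bbN\setminus\{0\})$ with the same quantifier-free type are approximately conjugate by an automorphism. By Lemma~\ref{L.SpectrumType} (applied in the abelian setting, where every element is normal and all commute), the quantifier-free type of a tuple $\bar a$ is determined by its joint spectrum $j\sigma(\bar a)\subseteq\bbC^n$, which here is a compact subset of $\bbC^n$ whose preimage structure on $Y=2^\bbN\setminus\{0\}$ under $y\mapsto(a_1(y),\dots,a_n(y))$ encodes $\cst(\bar a)$; two tuples with the same joint spectrum generate isomorphic subalgebras, and one upgrades this to an ambient automorphism using the strong homogeneity of $Y\cong 2^\bbN\setminus\{0\}$ (it is a one-point-deleted Cantor set, and its self-homeomorphism group acts with enough transitivity — any two copies of a given closed zero-dimensional quotient-pattern can be matched, with care at the deleted point since $Y$ is noncompact and the functions in $C_0(Y)$ must vanish at infinity, i.e.\ at $0$). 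The main obstacle I expect is precisely this point-at-infinity bookkeeping: one must ensure that the homeomorphism of $2^\bbN$ realizing the conjugacy fixes (or can be arranged to fix) the deleted point $0$, equivalently that the $\cst(\bar a)$-isomorphism respects the unique character that is ``missing'' from $Y$; this is exactly the step where the nonunital language matters, and I would resolve it by working in the unitization $\widetilde{C_0(Y)}=C(2^\bbN)$ throughout and observing that a quantifier-free formula in the nonunital language corresponds to a quantifier-free formula in the unital language over $C(2^\bbN)$ that does not mention the adjoined unit $1$, so the unital near-ultrahomogeneity of $C(2^\bbN)$ delivers an automorphism fixing $1$, which is automatically the point-$0$-fixing requirement.
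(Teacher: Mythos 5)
Your treatment of $\bbC$ and your overall strategy for $C_0(2^{\bbN}\setminus\{0\})$ ($\omega$-categoricity plus near ultrahomogeneity via Proposition~\ref{prop:OmegaCategoricalQE} and Lemma~\ref{L.SpectrumType}, with the unitization $C(2^{\bbN})$ in the background) match the paper's proof, and you correctly isolate the crux: the automorphism (or embedding) produced from the unital theory of $C(2^{\bbN})$ must respect the deleted point $0$. But your resolution of that crux is wrong, in both of your routes. You argue that because the map ``sends the distinguished adjoined unit to the distinguished adjoined unit'' it preserves the ideal, and later that near ultrahomogeneity of $C(2^{\bbN})$ ``delivers an automorphism fixing $1$, which is automatically the point-$0$-fixing requirement.'' Every unital embedding and every automorphism of $C(2^{\bbN})$ fixes $1$; that says nothing about the character at $0$. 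An automorphism dual to a homeomorphism moving $0$ is unital but does not map $C_0(2^{\bbN}\setminus\{0\})$ into itself, so the ideal $\ker(\mathrm{ev}_0)$ is not determined by the unit, and the embedding $\widetilde A\to(\widetilde A)^{\cU}$ you extract from quantifier elimination for $C(2^{\bbN})$ need not restrict to a map $A\to A^{\cU}$. This is precisely the step where the nonunital hypothesis has to be used, and as written your argument does not use it.

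The actual reason the point can be controlled is that the tuples lie in $C_0$, hence vanish at $0$. The paper first treats tuples $\bar a,\bar b$ with finite joint spectrum: writing $a_i=\sum_k\lambda_{ki}p_k$ and $b_i=\sum_k\lambda_{ki}q_k$, the clopen piece containing $0$ must carry coefficient $0$ for \emph{both} tuples (since $a_i,b_i\in C_0(X)$), so one can index it as $p_1$, respectively $q_1$, and then choose the automorphism of $\Clop(2^{\bbN})$ to match $p_k$ with $q_k$ \emph{and} to send the ultrafilter of clopen sets containing $0$ to itself; the dual automorphism of $C(2^{\bbN})$ then restricts to an automorphism of $C_0(X)$ carrying $\bar a$ to $\bar b$. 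The general case follows by $\e/2$-approximating by tuples with finite joint spectrum, using zero-dimensionality. So the gap is repairable, but the repair goes through the vanishing of the tuples at $0$ and an explicit choice of the Boolean-algebra automorphism, not through the unit.
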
 

\begin{proof} 
Since  $\bbC$ has neither a proper subalgebra nor a nontrivial self-embedding, it has quantifier elimination by  Proposition~\ref{P.QE}. 

Since $C(2^{\bbN})$ is generated by its projections and all countable atomless Boolean algebras are isomorphic,  $C(2^{\bbN})$  is (up to isomorphism) the only separable model of its theory. Let $X=2^{\bbN}\setminus\{0\}$. Since for a locally compact Hausdorff space $Y$  the unitization of $C_0(Y)$ is isomorphic to $C(Y\cup\{\infty\})$ (where $Y\cup\{\infty\}$ is the one-point compactification of $Y$), every separable model of the theory of $C_0(X)$ is isomorphic to $C_0(X)$.  

The above shows that the theory of $C_0(X)$ is $\omega$-categorical, so by Proposition \ref{prop:OmegaCategoricalQE} the proof will be complete if we show that $C_0(X)$ is near ultrahomogeneous.  By Lemma \ref{L.SpectrumType} it suffices to show that if $\bar a,\bar b\in C_0(X)$ are finite tuples of contractions with $j\sigma(\bar a)=j\sigma(\bar b)$, and $\epsilon>0$, there is an automorphism $\Psi$ of $C_0(X)$ such that $\norm{\Psi(a_i)-b_i}<\epsilon$.  This is an immediate consequence of zero-dimensionality of $2^{\bbN}$ and  homogeneity of the algebra
$\Clop(2^{\bbN})$ of its clopen subsets, but we provide  details for the reader's convenience.  

Fix $\bar a,\bar b\in C_0(X)$ with $j\sigma(\bar a)=j\sigma(\bar b)$ and $\epsilon>0$. Since the joint spectrum is defined from the  unitization, $\bar a$ and $\bar b$ have the same joint spectrum as elements of $(C_0(X))^{\dagger}\cong \CCantor$.
Suppose for a moment that $j\sigma(\bar a)$ is finite. With $n$ denoting the cardinality of $j\sigma(\bar a)$ we can find 
 nonzero projections $p_k$, for $k\leq n$, and $\lambda_{ki}\in \bbC$ for $k\leq n$ and $i\leq |\bar a|$ 
  such that $a_i=\sum_{k\leq n} \lambda_{ki} p_k$ for all $i$.  
We arrange that~$0$ (the point removed from $2^{\bbN}$) belongs to the clopen set corresponding to $p_1$ and therefore 
   $\lambda_{1i}=0$ for 
 all $i\leq |\bar a|$ (because $a_i\in C_0(X)$).  
    Since $j\sigma(\bar a)=j\sigma(\bar b)$, there are nonzero projections $q_k$, for $k\leq n$, 
  such that $b_i=\sum_{k\leq n} \lambda_{ki} q_k$ for all $i$. By the choice of $n$, 
  $0$ belongs to the clopen set corresponding to $q_1$.  
  Any automorphism $\Phi_*$ of $\Clop(2^{\bbN})$ sending the clopen set correspond to $p_k$ to the clopen set corresponding to $q_k$ for all $k\leq n$   
  is dual to an automorphism $\Phi$ of $C(2^{\bbN})$ that sends $\bar a$ to $\bar b$. 
We need to ensure that $\Phi$  sends $C_0(X)$ to itself. 
To do so, choose $\Phi_*$ so that, in addition to the above, it  sends the ultrafilter of projections with 
 $0$ in its range to itself. 
  If  $\Phi$ is dual to $\Phi_*$ then its restriction to  $C_0(X)$ is as required. 
  
Now consider the case when $j\sigma(\bar a)$ is not necessarily finite. 
Fix $\e>0$ and let $G\subseteq j\sigma(\bar a)$ be a finite $\e/2$-dense set. 
Since $2^{\bbN}$ is zero-dimensional there exists $\bar a'$ satisfying 
$j\sigma(\bar a')=G$ within distance $\e/2$ of $\bar a$. 
Similarly there exists $\bar b'$  satisfying 
$j\sigma(\bar b')=G$ within distance $\e/2$ of $\bar b$. 
If $\Psi$ is an automorphism of $C_0(X)$ sending $\bar a'$ to $\bar b'$, 
then $\Psi(\bar a)$ is within $\e$ of $\bar b$. This completes the proof. 
 \end{proof}

It seems likely that $\bbC$ and $C_0(2^{\bbN}\setminus \{0\})$ are the only, up to isomorphism\footnote{By homogeneity of the Cantor space, we have that $2^{\bbN}\setminus\{0\}$ and $2^{\bbN}\setminus\{x\}$ are homeomorphic whenever $x\in 2^{\bbN}$, hence for every such $x$ we have $C_0(2^{\bbN}\setminus \{0\})\cong C_0(2^{\bbN}\setminus\{x\})$.}, 
\cstar-algebras that admit quantifier elimination in the language of \cstar-algebras without a symbol for a unit.
By Theorem~\ref{C0} any counterexample would have to be abelian. Some of the results of 
     \cite{EagleGoldVig} may be  relevant.

\section{Model completeness and model companions}\label{sec:Questions}
A theory is said to be \emph{model complete} if every embedding between models of the theory is \emph{elementary}, in the sense of preserving the values of all formulas.
It is easy to see that quantifier elimination implies model completeness, while the converse is false. 
For example, the theory of every finite-dimensional \cstar-algebra is model complete.  
Model completeness is a useful tool in applications of model theory to algebra; for example, the fact that the (discrete) theory of algebraically closed fields is model complete is the key ingredient in a model-theoretic proof of Hilbert's Nullstellensatz (see \cite[Theorem 3.2.11]{Marker}). 

We shall consider two weakenings of model completeness.  A sentence $\sigma$ is $\forall$ (or \emph{universal}) if it is of the form
\[\sup_{\bar x}\phi(\bar x)\]
where $\phi(\bar x)$ is quantifer-free.  Similaly, a sentence is \allexist{} if it is of the form 
\[
\sup_{\bar x}\inf_{\bar y} \phi(\bar x, \bar y)
\]
where $\phi(\bar x, \bar y)$ is quantifier-free.  A theory is \emph{universally axiomatizable} (respectively, \emph{\allexist-axiomatizable}) if it has a set of universal (\allexist{}) axioms.

If a theory $T$ is  model complete then it is preserved by taking inductive limits of its models; indeed, since every embedding between models of such a theory $T$ is elementary, an inductive limit of models of $T$ forms an elementary chain, and each model in the chain is elementarily embedded in the limit model (see \cite[Proposition 7.2]{BenYaacov2008a}), from which it immediately follows that the limit model is again a model of $T$.  
By a  standard preservation theorem  
the set of models of $T$ is closed under taking inductive 
limits if and only if~$T$ is \allexist-axiomatizable (see e.g., \cite[Proposition 2.4.4 (3)]{Muenster}). Therefore  model completeness of a theory implies its \allexist-axiomatizability. 

The other weakening of 
 model completeness that we shall consider is the existence of a model companion. 
 A theory $T^*$ is said to be a \emph{model companion} of theory~$T$ if: (i) every model of $T$ is a submodel of a model of $T^*$ and vice versa, and (ii)~$T^*$ is model complete. 
  For example, the model companion of the theory of fields is the theory of algebraically closed fields.  
The same argument as above shows that 
 if $T$ is \allexist-axiomatizable and $T^*$ is its model companion then $T^*\supseteq T$ (in particular, every model of $T^*$ is also a model of $T$) and that any  theory can have at most one model companion.

 In \cite[Proposition~5.10]{Goldbring2014} it was proved that, assuming Kirchberg's Embedding Problem has a positive  solution, 
the theory of \cstar-algebras does not have a model companion.  Isaac Goldbring observed that our results, together with the methods of \cite{Goldbring2014}, allow us to remove the dependence on Kirchberg's Embedding Problem; we appreciate his allowing us to include a proof here.  
In the proof we will need the following standard model-theoretic fact, a detailed proof of which can be found in \cite[Proposition 2.3.12]{EagleThesis}.

\begin{lem}\label{lem:ModelCompanion}
Let $T$ be a universally axiomatizable theory with model companion~$T^*$.  The following are equivalent:
\begin{enumerate}
\item{
$T^*$ has quantifier elimination,
}
\item{
$T$ has \emph{amalgamation}: Whenever $A, B, C \models T$, and $f : A\to B$ and $g : A\to C$ are embeddings, then there exists $D \models T$ and embeddings $r : B \to D$ and $s : C \to D$ such that $rf=sg$. \qed
}
\end{enumerate}
\end{lem}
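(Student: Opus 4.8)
The plan is to prove the equivalence of quantifier elimination of $T^*$ with amalgamation of $T$ by running both directions through Proposition~\ref{P.QE}, which characterizes quantifier elimination via the extension property for embeddings of finitely generated substructures. Throughout I will use that $T$ is universally axiomatizable, so every substructure of a model of $T$ is again a model of $T$, and that $T^*\supseteq T$ (every model of $T^*$ is a model of $T$), which follows from $T$ being universally — hence in particular \allexist-axiomatizable — as noted in the discussion preceding Lemma~\ref{lem:ModelCompanion}.

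For (1)$\Rightarrow$(2): suppose $T^*$ has quantifier elimination and let $A,B,C\models T$ with embeddings $f\colon A\to B$ and $g\colon A\to C$. Since $T^*$ is a model companion of $T$, embed $B$ into some $B^*\models T^*$ and $C$ into some $C^*\models T^*$. Identify $A$ with its image $f(A)\subseteq B\subseteq B^*$; then $g\circ f^{-1}$ gives an embedding of the finitely-\emph{generated}-up-to-closure substructure... here I must be slightly careful, since in the metric setting ``finitely generated'' is the right finiteness notion but $A$ need not be finitely generated. The clean route is: by Proposition~\ref{P.QE}(2) applied to $T^*$, every embedding of a finitely generated substructure $F$ of $B^*$ into $C^*$ extends to an embedding of $B^*$ into an elementary extension of $C^*$; then one bootstraps from finitely generated pieces of $A$ to all of $A$ exactly as in the proof that (2)$\Rightarrow$(3) in Proposition~\ref{P.QE}, using the compactness theorem with the elementary diagram of $C^*$ and the atomic diagram of $B^*$ amalgamated over the common part $A$. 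This yields an elementary extension $D^*$ of $C^*$ together with an embedding $r\colon B^*\to D^*$ such that $r$ agrees on $A$ with the inclusion $C\subseteq C^*\subseteq D^*$. Restricting, $r\upharpoonright B\colon B\to D^*$ and the inclusion $C\to D^*$ agree on $A$ after identifying via $f,g$; and $D^*\models T^*$, hence $D^*\models T$, so $D:=D^*$ witnesses amalgamation of $T$.

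For (2)$\Rightarrow$(1): suppose $T$ has amalgamation; we verify Proposition~\ref{P.QE}(2) for $T^*$. Let $A,B\models T^*$ and let $\iota\colon F\to B$ be an embedding of a finitely generated substructure $F\subseteq A$. Since $T^*\supseteq T$ and $T$ is universally axiomatizable, $F\models T$, $A\models T$, $B\models T$, and the inclusion $F\hookrightarrow A$ together with $\iota\colon F\to B$ are embeddings of models of $T$ over $F$. By amalgamation of $T$ there is $D\models T$ and embeddings $r\colon A\to D$, $s\colon B\to D$ with $r\upharpoonright F = s\circ\iota$. Now embed $D$ into a model $D^*\models T^*$ (using that $T^*$ is a model companion). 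Since $B\models T^*$ and the composite $B\to D\to D^*$ is an embedding between models of $T^*$, and $T^*$ is model complete, this embedding is elementary; hence $D^*$ is (isomorphic to) an elementary extension of $B$, and $r$ composed with $D\hookrightarrow D^*$ gives an embedding of $A$ into this elementary extension of $B$ extending $\iota$. By Proposition~\ref{P.QE}, $T^*$ has quantifier elimination. The main obstacle I anticipate is the bookkeeping in the (1)$\Rightarrow$(2) direction to pass from the finitely-generated-substructure form of the extension property to one that works over the possibly-non-finitely-generated $A$; this is handled by the same compactness argument as in the proof of Proposition~\ref{P.QE}, so I would either cite that argument or reproduce it briefly. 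Since the statement is quoted with ``\qed'' and attributed to \cite{EagleThesis}, a short indication along these lines is all that is expected.
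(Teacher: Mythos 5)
Your proof is correct, and it is the standard argument that the paper itself omits (the lemma is stated with a \qed and deferred to \cite[Proposition 2.3.12]{EagleThesis}): both directions run through Proposition~\ref{P.QE}, using that universal axiomatizability makes substructures of models of $T$ again models of $T$, that $T^*\supseteq T$, and model completeness of $T^*$ to turn the amalgam's embedding of $B$ into an elementary one. Your flagged bookkeeping point in (1)$\Rightarrow$(2) --- passing from finitely generated substructures to the possibly large $A$ via the diagram-plus-compactness argument already carried out in the proof of Proposition~\ref{P.QE} --- is handled appropriately.
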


For technical reasons related to the process of converting a \cstar-algebra to a multi-sorted structure, the theory $T$ of unital \cstar-algebras is only
 \allexist-axiomatizable (see \cite[p. 485]{Farah2014a}). 
     If we expand the language of unital \cstar-algebras to include predicates for every $^*$-polynomial with complex coefficients in a single variable, then the theory $T_0$ of unital \cstar-algebras in the expanded language \emph{is} universally axiomatizable (again, see \cite[p. 485]{Farah2014a}).  The new theory includes (universal) axioms asserting that the new symbols agree with the $^*$-polynomials they represent. 

\begin{lem} \label{L:T-T-0}
For  $T, T_0, L$, and $L_0$ as above and an $L$-theory $T^*\supseteq T$
  we have the following. 
\begin{enumerate}
\item      The forgetful functor $F$ from the category of models of $T^*\cup T_0$ 
      to the category of models of~$T^*$ is an equivalence of categories. 
\item If $A_0\subseteq B_0$ are models of $T_0$ then 
$A_0$ is an elementary submodel of $B_0$ if and only if $F(A)$ is an elementary submodel of $ F(B)$. 
\item 
 $T^*$ admits elimination of 
quantifiers if and only if $T^*\cup T_0$ does.
\end{enumerate}
\end{lem}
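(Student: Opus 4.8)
The plan is to verify the three parts by unwinding the definitions of the forgetful functor $F$ and using the fact that the new symbols in $L_0$ are definable (indeed quantifier-free definable) in $L$, together with the universal axioms of $T_0$ pinning down their interpretation.

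For (1), I would first observe that if $A_0\models T^*\cup T_0$ then $F(A_0)$ is obtained simply by forgetting the predicates for $^*$-polynomials; since those predicates were forced by the axioms of $T_0$ to coincide with the corresponding $^*$-polynomial terms, no information is lost, so $F$ is faithful and injective on objects. Conversely, given $A\models T^*$, one expands $A$ to an $L_0$-structure $\hat A$ by interpreting each new predicate symbol as the corresponding $^*$-polynomial; the universal axioms of $T_0$ hold by construction, and $A\models T^*$ gives $\hat A\models T^*$ (note $T^*$ is an $L$-theory, so it is insensitive to the added structure). Thus $F$ is essentially surjective. For fullness: an $L_0$-morphism is an $L$-morphism that additionally respects the new predicates, but since those predicates are interpreted via $^*$-polynomials and any $L$-morphism (unital $^*$-homomorphism) already commutes with $^*$-polynomials, every $L$-morphism between the expansions is automatically an $L_0$-morphism. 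Hence $F$ is full, faithful, and essentially surjective, i.e.\ an equivalence.

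For (2), the point is that every $L_0$-formula is $T_0$-equivalent to an $L$-formula (replace each occurrence of a new predicate by the $^*$-polynomial it denotes, using the universal axioms), and conversely every $L$-formula is already an $L_0$-formula. So the class of conditions expressible is the same on both sides, and ``elementary submodel'' — which is a statement about all formulas agreeing — is preserved and reflected by $F$. For (3), one argues the same way: a quantifier-free $L_0$-formula translates to a quantifier-free $L$-formula and vice versa (the substitution introduces no quantifiers), so $T^*\cup T_0$ eliminates quantifiers down to quantifier-free $L_0$-formulas iff $T^*$ eliminates quantifiers down to quantifier-free $L$-formulas; here one should be slightly careful with the $\epsilon$-approximate nature of quantifier elimination in continuous logic, but the translation is uniform and so approximations transfer with the same $\epsilon$.

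I expect the main (mild) obstacle to be bookkeeping rather than conceptual: making sure that the translation between $L_0$-formulas and $L$-formulas is compatible with the continuous-logic machinery (connectives $f:\mathbb R^n\to\mathbb R$, the $\sup$/$\inf$ quantifiers over balls) and with the definition of quantifier elimination as approximate uniform agreement over contractive tuples, so that in particular the translation does not blow up norms or change which tuples are being quantified over. Once one checks that the $^*$-polynomial predicates are interpreted identically in every model of $T_0$ and that morphisms respect them for free, all three statements follow formally.
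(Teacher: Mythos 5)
Your proposal is correct and, for parts (1) and (2), follows exactly the paper's route: the new symbols are pinned down by the (universal) axioms of $T_0$ to agree with the corresponding $^*$-polynomials, so every model of $T^*$ has a unique expansion to a model of $T^*\cup T_0$, every $L$-morphism automatically respects the new symbols, and every $L_0$-formula is $T_0$-provably equivalent to an $L$-formula. The only divergence is in (3): you argue by direct syntactic translation of quantifier-free formulas (noting the substitution introduces no quantifiers and the $\epsilon$-approximations transfer verbatim), whereas the paper instead deduces (3) from (1) and (2) via the semantic test for quantifier elimination of \cite[Proposition~13.6]{BenYaacov2008a} (extending embeddings of substructures to elementary extensions). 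Both arguments are valid; yours is more self-contained, while the paper's reuses machinery it needs elsewhere anyway, and in either case the one point to make explicit is that the ``only if'' direction of (3) uses the unique-expansion fact from (1) to pass from models of $T^*$ to models of $T^*\cup T_0$, which you do note.
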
 

\begin{proof} Since every $L_0$-term is (provably in $T_0$) equivalent to 
the $L$-term obtained by replacing new function symbols by the corresponding
$L$-terms, every substructure of $A\models T$ has a unique expansion to a model of $T_0$ and  
 (1) follows.  Similarly,  every $L_0$-formula is (provably in $T_0$) 
equivalent to an $L$-formula, and therefore (2) follows. 

 By \cite[Proposition~13.6]{BenYaacov2008a} a
  theory has quantifier elimination if and only if every embedding between substructures of
  its models $M$ and $N$ extends to  an elementary   embedding between 
      $M$ and an elementary extension of $N$. Together with  (1) and (2) 
      this implies (3). 
\end{proof}

 \begin{thm} \label{T:MC} The theory of  unital \cstar-algebras does not have a model companion. 
  \end{thm}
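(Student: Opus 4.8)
The plan is to derive a contradiction from the assumption that $T$, the theory of unital \cstar-algebras, has a model companion $T^*$. The strategy combines the machinery of Lemma~\ref{L:T-T-0}, Lemma~\ref{lem:ModelCompanion}, and the main quantifier-elimination classification (Theorem~\ref{T0}).

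\begin{proof}
Suppose for contradiction that $T$ has a model companion $T^*$. As explained after Lemma~\ref{lem:ModelCompanion}, the theory $T_0$ of unital \cstar-algebras in the language $L_0$ (with predicates added for every one-variable $^*$-polynomial) is universally axiomatizable. By Lemma~\ref{L:T-T-0}(1), the categories of models of $T^*$ and of $T^*\cup T_0$ are equivalent, and it is easy to check that $T^*\cup T_0$ is then a model companion of $T_0$: every model of $T_0$ is a submodel of a model of $T^*\cup T_0$ and conversely (this is inherited from the corresponding property for $T$ and $T^*$ via the forgetful functor), and model completeness of $T^*\cup T_0$ follows from that of $T^*$ by Lemma~\ref{L:T-T-0}(2). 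Now $T_0$ is universally axiomatizable and has amalgamation: given $A_0, B_0, C_0\models T_0$ and embeddings $f\colon A_0\to B_0$, $g\colon A_0\to C_0$, the underlying unital \cstar-algebras amalgamate over $A_0$ because the class of unital \cstar-algebras has the amalgamation property (a unital \cstar-algebra embeds into the unital full amalgamated free product $B_0 *_{A_0} C_0$; see the standard references on amalgamated free products of \cstar-algebras), and the amalgam carries a unique $T_0$-expansion by Lemma~\ref{L:T-T-0}(1). Hence by Lemma~\ref{lem:ModelCompanion}, $T^*\cup T_0$ has quantifier elimination, and therefore so does $T^*$ by Lemma~\ref{L:T-T-0}(3).

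Now we invoke the classification. Since $T^*\supseteq T$, every model of $T^*$ is in particular a unital \cstar-algebra. A model companion is complete whenever the original theory has the joint embedding property, but we do not even need completeness: every \emph{complete} extension of $T^*$ is a complete theory of a unital \cstar-algebra admitting quantifier elimination, hence by Theorem~\ref{T0} must be the theory of one of $\bbC$, $\bbC^2$, $M_2(\bbC)$, or $C(2^\bbN)$. Thus every model of $T^*$ is elementarily equivalent to one of these four algebras. But $\bbC$ (for instance) is a model of $T$, so by condition~(i) in the definition of model companion $\bbC$ embeds unitally into some $D\models T^*$. Since the above list consists of finite-dimensional algebras and $C(2^\bbN)$, none of which contains a unital copy of, say, $M_3(\bbC)$ or $\cO_2$, while $M_3(\bbC)\models T$ must likewise embed unitally into a model of $T^*$, we obtain a contradiction: a model $D'\models T^*$ containing a unital copy of $M_3(\bbC)$ cannot be elementarily equivalent to any of $\bbC$, $\bbC^2$, $M_2(\bbC)$, $C(2^\bbN)$, since those algebras contain no unital copy of $M_3(\bbC)$ and ``contains a unital copy of $M_3(\bbC)$'' is an existential property preserved under elementary equivalence in the relevant direction (it is witnessed by a quantifier-free-definable configuration of matrix units). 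This contradiction shows that $T$ has no model companion.
\end{proof}

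The main obstacle I expect is the amalgamation step: one needs that unital \cstar-algebras have the amalgamation property in the model-theoretic sense, which for \cstar-algebras is the statement that the unital full amalgamated free product embeds both factors, together with the bookkeeping to transport this through the language expansion $L \rightsquigarrow L_0$ via Lemma~\ref{L:T-T-0}. The rest is a clean application of the classification in Theorem~\ref{T0}: once quantifier elimination for $T^*$ is established, no single theory among the four permitted ones can serve as a model companion, since a model companion must house embedded copies of \emph{every} unital \cstar-algebra, in particular algebras (like $M_3(\bbC)$, or any noncommutative algebra of dimension $>4$) that are incompatible with all four.
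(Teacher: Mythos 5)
Your proof is correct and follows essentially the same route as the paper's: pass to the universally axiomatizable $T_0$ via Lemma~\ref{L:T-T-0}, use the amalgamated free product to verify amalgamation, conclude quantifier elimination for $T^*$ from Lemma~\ref{lem:ModelCompanion}, and then contradict the classification in Theorem~\ref{T0} by noting that a model companion would have to accommodate embeddings of all unital \cstar-algebras. The only cosmetic difference is that you instantiate the final contradiction with a concrete witness ($M_3(\bbC)$), whereas the paper leaves that step implicit.
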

 
 \begin{proof}
 Suppose $T$ has model companion $T^*$. 
 Since $T$ is \allexist-axiomatizable we have  
 $T^*\supseteq T$ and in particular every model of $T^*$ is a unital \cstar-algebra.
  Lemma~\ref{L:T-T-0} (1--2)  implies that  
  $T^*\cup T_0$ is the model companion of  $T_0$.  The amalgamated free product construction for \cstar-algebras (see \cite[II.8.3.5]{BlackadarOpAlg})
shows that $T_0$ has amalgamation. (Note that, to show that $T_0$ has amalgamation, we can consider either the full or the reduced amalgamated free product).  The theory $T_0$ is universally axiomatizable, so by Lemma~\ref{lem:ModelCompanion} $T^* \cup T_0$ has quantifier elimination, and hence Lemma \ref{L:T-T-0}~(3) implies that $T^*$ has quantifier elimination as well.
By Lemma~\ref{L:T-T-0} (1) 
 every \cstar-algebra must embed into a model of~$T^*$, and that is not possible for any of the theories of \cstar-algebra with quantifier elimination listed in Theorem \ref{T0}.
\end{proof}

The Cuntz algebra $\cO_2$ belongs to the important class of strongly self-absorbing \cstar-algebras. 
A \cstar-algebra $D$ is \emph{strongly self-absorbing} (s.s.a.)  if $D\cong D\otimes D$ and the
 embedding of $D$ into $D\otimes D$ that sends $d$ to $d\otimes 1$ is approximately 
 unitarily equivalent to an isomorphism between $D$ and $D\otimes D$ (\cite{Toms2007}).     
 S.s.a.  \cstar-algebras 
 play an important role in the classification program of \cstar-algebras 
and exhibit interesting model-theoretic properties (see \cite[\S 2.2 and \S 4.5]{Fa:Logic} and \cite{SSA}). 

\begin{thm}\label{T.ssa} Assume  $A$ has the same universal theory as an s.s.a. algebra $D$. 
If the theory of $A$ is model complete (or even just \allexist-axiomatizable), then $A$ is elementarily equivalent to $D$. 
\end{thm}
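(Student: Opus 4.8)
The plan is to show that $D$ satisfies every axiom of $\op{Th}(A)$; this gives $A\equiv D$ at once, since $\op{Th}(A)$ is a complete theory and hence every model of it is elementarily equivalent to $A$. As recalled in Section~\ref{sec:Questions}, model completeness implies \allexist-axiomatizability, so in either case we may fix a set $\Sigma$ of \allexist-axioms for $\op{Th}(A)$, i.e.\ axioms of the form $\sigma=\sup_{\bar x}\psi(\bar x)$ with $\psi(\bar x)=\inf_{\norm{\bar y}\leq n}\phi(\bar x,\bar y)$ and $\phi$ quantifier-free. (If one prefers to use only the separable case of the facts about strongly self-absorbing algebras invoked below, one may first replace $A$ by a separable elementary submodel, which changes neither $\op{Th}(A)$ nor $\op{Th}_\forall(A)$.)

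First I would build a sandwich of unital embeddings. Since $A$ and $D$ have the same universal theory, $D\models\op{Th}_\forall(A)$, so $D$ embeds unitally into some model $N$ of $\op{Th}(A)$ (the standard fact that a structure modelling the universal theory of $A$ embeds into a model of $\op{Th}(A)$; see, e.g., \cite{Muenster}). Since $N\models\op{Th}(A)$ we have $\op{Th}_\forall(N)=\op{Th}_\forall(A)=\op{Th}_\forall(D)$, so $N$ in turn embeds unitally into a model $P$ of $\op{Th}(D)$. Composing, we obtain a unital embedding $D\hookrightarrow P$ of the strongly self-absorbing algebra $D$ into a model of its own theory. By the general strongly self-absorbing version of the fact recalled in Section~\ref{S.O2} for $\cO_2$ (see \cite[Proposition 2.15]{SSA} and \cite{Goldbring2014}), every such embedding is elementary; so, identifying $D$ and $N$ with their images, we have $D\subseteq N\subseteq P$ and $D\preceq P$.

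The key consequence is that $D$ is existentially closed in $N$: for an existential formula $\psi$ as above and a tuple $\bar d$ from $D$, enlarging the ambient algebra can only decrease the value of an infimum, so
\[
\psi^D(\bar d)=\psi^P(\bar d)\leq\psi^N(\bar d)\leq\psi^D(\bar d),
\]
using $D\preceq P$ for the equality and $D\subseteq N\subseteq P$ for the inequalities; hence $\psi^N(\bar d)=\psi^D(\bar d)$. Now fix $\sigma=\sup_{\bar x}\psi(\bar x)\in\Sigma$. Since $N\models\op{Th}(A)$ we have $\sigma^N=0$, that is $\psi^N(\bar d)=0$ for all tuples $\bar d$ from $N$ of the relevant norm; in particular $\psi^D(\bar d)=\psi^N(\bar d)=0$ for all such $\bar d$ from $D$, so $\sigma^D=0$. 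As $\sigma\in\Sigma$ was arbitrary, $D\models\op{Th}(A)$, and therefore $A\equiv D$.

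The one place where the hypotheses genuinely enter is the interface between the two halves of the argument: matching of universal theories alone yields only an abstract unital embedding $D\hookrightarrow P$, and it is precisely the strong self-absorption of $D$ that upgrades this to an \emph{elementary} embedding, which is exactly what allows the \allexist-axioms of $\op{Th}(A)$ to be pulled back from the intermediate algebra $N$ down to $D$. I expect that to be the only real obstacle; the remaining ingredients---existence of the sandwich, the behaviour of existential formulas under inclusions, and completeness of $\op{Th}(A)$---are routine.
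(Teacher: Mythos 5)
Your proof is correct, and it rests on exactly the same two pillars as the paper's: equality of universal theories gives the mutual (unital) embeddings forming a sandwich around a model of $\op{Th}(A)$, and strong self-absorption upgrades the resulting self-embedding of $D$ to an elementary one. Where you diverge is in the finishing move. The paper iterates the sandwich into an infinite alternating chain $B_0\to A_0\to B_1\to A_1\to\cdots$ and passes to the inductive limit, which is elementarily equivalent to $D$ by the elementary chain argument and to $A$ by preservation of \allexist-theories under inductive limits. You instead stop after a single sandwich $D\subseteq N\subseteq P$ with $D\preceq P$ and observe that this already makes existential formulas evaluated at tuples from $D$ agree in $D$ and in $N$, so the \allexist-axioms of $\op{Th}(A)$ pass directly from $N$ down to $D$. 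This is a genuine (if modest) economy: no limit object is constructed and no appeal to the preservation theorem for inductive limits is needed, only the definition of an \allexist-sentence and the monotonicity of infima under inclusions. The trade-off is that the paper's chain formulation is the form in which the ``sandwich argument'' it cites is usually packaged and generalizes immediately to situations where one really does need the limit structure; your version is the sharper statement that a model of an \allexist-axiomatizable theory squeezed between an elementary pair transfers that theory to the inner algebra. Your parenthetical reduction to the separable case is also the right way to justify invoking the s.s.a. elementarity fact and the existence of the embeddings without worrying about density characters.
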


A use of saturation of ultrapowers yields the following standard model-theoretic fact.

\begin{lem}\label{lem:UniversalTheoryUltrapower}
Two metric structures have the same universal theory if and only if each can be embedded in an ultrapower of the other. \qed
\end{lem}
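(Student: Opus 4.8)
The plan is to prove both implications using the standard interplay between universal sentences, ultrapowers, and embeddings in continuous logic; write $A$ and $B$ for the two metric structures. First I would set up two facts. (i) By the continuous form of \L o\'s's theorem the diagonal embedding $B\to B^{\cU}$ is elementary, so $\sigma^{B^{\cU}}=\sigma^B$ for \emph{every} sentence $\sigma$ (see \cite[\S 5]{BenYaacov2008a}). (ii) An embedding preserves the values of all quantifier-free formulas, so if $\iota\colon A\to C$ is an embedding and $\sigma=\sup_{\bar x}\phi(\bar x)$ is universal with $\phi$ quantifier-free, then $\sigma^A=\sup_{\bar a}\phi^A(\bar a)=\sup_{\bar a}\phi^C(\iota\bar a)\le \sigma^C$. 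I would also record the syntactic observation, special to continuous logic, that $-\sup_{\bar x}\phi(\bar x)=\inf_{\bar x}(-\phi(\bar x))$ and that $-\phi$ is again quantifier-free; hence the universal theory of a structure (the map $\sigma\mapsto\sigma^{\bullet}$ on universal sentences) determines, and is determined by, its existential theory. In particular $A$ and $B$ have the same universal theory if and only if $\sigma^A=\sigma^B$ for every existential sentence $\sigma$ as well.

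For the direction ($\Leftarrow$), suppose $\iota\colon A\to B^{\cU}$ and $\kappa\colon B\to A^{\cV}$ are embeddings. For a universal sentence $\sigma$, fact (ii) applied to $\iota$ gives $\sigma^A\le\sigma^{B^{\cU}}$, and fact (i) gives $\sigma^{B^{\cU}}=\sigma^B$; hence $\sigma^A\le\sigma^B$. The same reasoning applied to $\kappa$ yields $\sigma^B\le\sigma^A$, so $\sigma^A=\sigma^B$ for every universal $\sigma$, i.e.\ $A$ and $B$ have the same universal theory.

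For the direction ($\Rightarrow$), assume $A$ and $B$ have the same universal, hence the same existential, theory; I will embed $A$ into an ultrapower of $B$, the other embedding following by symmetry. Expand the language by a new constant $c_a$ for each $a\in A$ and consider $T'=\op{Th}(B)\cup \op{qfDiag}(A)$, where $\op{qfDiag}(A)$ is the quantifier-free diagram of $A$, i.e.\ the conditions recording the values $\phi^A(\bar a)$ of quantifier-free formulas at tuples from $A$. A model of $T'$ is exactly a model $C\models\op{Th}(B)$ together with an embedding $a\mapsto c_a^C$ of $A$ into $C$. To see $T'$ is satisfiable I would invoke the compactness theorem for continuous logic and verify approximate satisfiability of an arbitrary finite fragment inside $B$ itself: such a fragment constrains finitely many constants $c_{a_1},\dots,c_{a_n}$ by requiring $|\phi_i(c_{\bar a})-r_i|\le\e$ for finitely many quantifier-free $\phi_i$ and $r_i=\phi_i^A(\bar a)$. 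Setting $\theta(\bar x)=\max_i|\phi_i(\bar x)-r_i|$, the tuple $\bar a$ shows $(\inf_{\bar x}\theta)^A=0$; since $\inf_{\bar x}\theta$ is existential and $A,B$ have the same existential theory, $(\inf_{\bar x}\theta)^B=0$, so $B$ contains $\bar d$ with $\theta^B(\bar d)<\e$, witnessing the fragment. Thus $T'$ is consistent, yielding $C\models\op{Th}(B)$ with an embedding $A\hookrightarrow C$. Finally, since $C\equiv B$, the Keisler--Shelah theorem (\cite[Theorem 5.7]{BenYaacov2008a}) provides an ultrafilter with $C^{\mc{W}}\cong B^{\mc{W}}$, and composing $A\hookrightarrow C\hookrightarrow C^{\mc{W}}\cong B^{\mc{W}}$ embeds $A$ into an ultrapower of $B$.

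The routine bookkeeping aside, the one genuinely continuous-logic point — and the step I would be most careful about — is the reduction in the last paragraph: that equality of universal theories forces equality of existential theories (so the finite-consistency check can be run inside $B$), together with the fact that continuous compactness only requires each finite fragment to be satisfied approximately rather than exactly. Everything else is the classical argument transcribed verbatim.
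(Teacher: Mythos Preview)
Your argument is correct. Note, however, that the paper does not actually prove this lemma: it is stated with an immediate \qed, prefaced only by the remark that ``a use of saturation of ultrapowers yields the following standard model-theoretic fact.'' So the comparison is between your route and the one the paper gestures at.

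Your $(\Leftarrow)$ direction is exactly the intended one. For $(\Rightarrow)$ you take a slight detour: you invoke compactness to produce some $C\equiv B$ containing a copy of $A$, and then appeal to Keisler--Shelah to replace $C$ by an ultrapower of $B$. The approach the paper has in mind is more direct: choose an ultrafilter $\cU$ so that $B^{\cU}$ is $\kappa$-saturated for $\kappa$ exceeding the density character of $A$, and then build the embedding $A\hookrightarrow B^{\cU}$ by realizing, one finite tuple at a time, the quantifier-free type of a dense subset of $A$; each such type is finitely satisfiable in $B^{\cU}$ precisely because $A$ and $B$ share the same existential theory (your observation that equal universal theory forces equal existential theory is used the same way). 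The saturation route avoids the auxiliary model $C$ and the Keisler--Shelah step, and it gives explicit control over the ultrafilter (for separable $A$ in a countable language an ultrafilter on $\bbN$ suffices, matching the paper's standing convention), whereas your Keisler--Shelah invocation may force a larger index set. That said, both arguments are standard and either is acceptable here.
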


We can now prove Theorem~\ref{T.ssa}.

\begin{proof}[Proof of Theorem~\ref{T.ssa}] 
The proof uses the sandwich argument of \cite[Proposition~3.2]{GoHaSi:Theory}. 

Since $A$ has the same universal theory as $D$, Lemma \ref{lem:UniversalTheoryUltrapower} implies that $D$ embeds into an ultrapower of $A$ and $A$ embeds into an ultrapower of $D$. 
We therefore have a chain
\[
D\to A^{\cU}\to (D^{\cU})^{\cV}
\]
for some ultrafilters $\cU$ and $\cV$.

Since $D$ is s.s.a., every embedding of it into its ultrapower is
 elementary (e.g., \cite[Theorem~2.15]{SSA}). Taking ultrapower of the diagram and iterating the construction, we obtain a sequence of 
embeddings $B_0\to A_0\to B_1 \to A_1\to \dots$ 
such  $B_i\equiv D$,  
 $A_i\equiv A$ and  embeddings $B_i\to B_{i+1}$ are elementary for all~$i$. The inductive limit is elementarily equivalent to $D$ (by the elementarity) and to $A$ (by the 
well-known fact that \allexist-theories are preserved under direct limits, see \cite[Proposition 2.4.4 (3)]{Muenster}), and the conclusion follows. 
\end{proof}  

A purely infinite, simple, separable, and nuclear \cstar-algebra (that is, a \emph{Kirchberg} algebra) 
 is said to be in \emph{standard form} if $A$ is unital and $[1_A]=0$ in $K_0(A)$.
 This is equivalent to $A$ having a unital copy of $\cO_2$ (see e.g., \cite[\S 3]{SSA} or \cite[Proposition 4.2.3]{rordam2002classification}).

\begin{cor} \label{C2} 
If $A$ is a Kirchberg algebra in standard form other than $\cO_2$ then its theory is not 
\allexist-axiomatizable. In particular, 
If $n\geq 2$ then the theory of $M_{n}(\cO_{n+1})$ is not 
\allexist-axiomatizable. 
\end{cor}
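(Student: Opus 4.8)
The plan is to show that $A$ and $\cO_2$ have the same universal theory but are not elementarily equivalent, and then to invoke the contrapositive of Theorem~\ref{T.ssa}. For the universal theories, recall from the discussion preceding the statement that $A$ being in standard form is equivalent to $A$ containing a unital copy of $\cO_2$, so there is a unital embedding $\cO_2\hookrightarrow A$. Conversely $A$, being a Kirchberg algebra, is nuclear and hence exact, so by Kirchberg's $\cO_2$-embedding theorem (\cite{KircPhi:Embedding}) there is an embedding $\pi\colon A\to\cO_2$; the hereditary subalgebra $p\cO_2 p$, where $p=\pi(1_A)$, is a separable nuclear simple purely infinite \cstar-algebra with $[1_{p\cO_2 p}]=[p]=0$ in $K_0(p\cO_2p)\cong K_0(\cO_2)=0$, hence $p\cO_2 p\cong\cO_2$ by the classification of Kirchberg algebras, so we may regard $\pi$ as a \emph{unital} embedding $A\to\cO_2$. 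Composing each of these two embeddings with a diagonal embedding into an ultrapower, each of $A$ and $\cO_2$ embeds into an ultrapower of the other, and Lemma~\ref{lem:UniversalTheoryUltrapower} gives that $A$ and $\cO_2$ have the same universal theory.

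Next I would argue that $A\not\equiv\cO_2$. The algebra $\cO_2$ is $\cO_2$-stable, that is $\cO_2\cong\cO_2\otimes\cO_2$, and the class of separable $\cO_2$-stable \cstar-algebras is axiomatizable (\cite{SSA}; see also \cite[Theorem~2.5.1]{Muenster}). On the other hand $A$ is a unital Kirchberg algebra, so $A\otimes\cO_2\cong\cO_2$ by Kirchberg's absorption theorem (\cite{rordam2002classification}); since $A\not\cong\cO_2$ this means $A$ is not $\cO_2$-stable. As both $A$ and $\cO_2$ are separable and $\cO_2$-stability is preserved by elementary equivalence, $A\not\equiv\cO_2$. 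Now Theorem~\ref{T.ssa}, applied with the s.s.a.\ algebra $D=\cO_2$ whose universal theory $A$ shares, shows that $\op{Th}(A)$ cannot be \allexist-axiomatizable. This is the crux of the argument, and the only step that is not routine: the difference between $A$ and $\cO_2$ is $K$-theoretic, and $K$-theory is not an elementary invariant, so one cannot separate the two algebras by an explicit sentence; what makes the proof work is that $\cO_2$-absorption---although for unital Kirchberg algebras it is equivalent to having trivial $K$-theory---is nonetheless axiomatizable, via its reformulation in terms of central sequence algebras. (Arranging the embedding $A\to\cO_2$ to be unital is a minor technical point, handled by the hereditary-subalgebra trick above.)

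Finally, for the last sentence it suffices to check that $M_n(\cO_{n+1})$ is a Kirchberg algebra in standard form not isomorphic to $\cO_2$ when $n\geq 2$. It is unital, separable, and---being stably isomorphic to $\cO_{n+1}$---nuclear, simple, and purely infinite. Under the Morita isomorphism $K_0(M_n(\cO_{n+1}))\cong K_0(\cO_{n+1})=\bbZ/n\bbZ$, the class $[1_{M_n(\cO_{n+1})}]$ corresponds to $n\cdot[1_{\cO_{n+1}}]=0$, so $M_n(\cO_{n+1})$ is in standard form; and $K_0(M_n(\cO_{n+1}))=\bbZ/n\bbZ\neq 0=K_0(\cO_2)$ for $n\geq 2$, so it is not $\cO_2$. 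The general statement then yields the conclusion.
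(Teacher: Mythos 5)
Your proposal is correct and follows essentially the same route as the paper: establish that $A$ and $\cO_2$ have the same universal theory via the mutual embeddings (standard form in one direction, the Kirchberg--Phillips $\cO_2$-embedding theorem in the other) and Lemma~\ref{lem:UniversalTheoryUltrapower}, then apply Theorem~\ref{T.ssa}. The only cosmetic differences are that you arrange the embedding $A\to\cO_2$ to be unital explicitly (a point the paper glosses over) and that you derive the contradiction by showing $A\not\equiv\cO_2$ via axiomatizability of $\cO_2$-stability, whereas the paper equivalently cites the fact that $\cO_2$ is the unique separable nuclear model of its theory to conclude $A\cong\cO_2$.
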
 

\begin{proof} 
Let $A$ be a Kirchberg algebra in standard form, and suppose that $A$ is \allexist-axiomatizable; we show that $A \cong \cO_2$.  As mentioned above, $A$ contains a unital copy of $\cO_2$.  Conversely, the main result of \cite{KircPhi:Embedding} shows that every separable exact \cstar-algebra embeds in $\cO_2$, so since nuclear algebras are exact we have that $A$ embeds into $\cO_2$.  Lemma \ref{lem:UniversalTheoryUltrapower} therefore implies that $A$ and $\cO_2$ have the same universal theory, and therefore by Theorem \ref{T.ssa} $A$ and $\cO_2$ are elementarily equivalent.  To finish the proof we use the fact that $\cO_2$ is (up to isomorphism) the only separable nuclear model of its theory (this is a consequence of Kirchberg's theorem that $A\otimes \cO_2\cong \cO_2$ for all  separable, nuclear, unital simple \cstar-algebras $A$; see \cite{Goldbring2014} or \cite{Muenster}).  Thus $A \cong \cO_2$. 

The fact that $M_n(\cO_{n+1})$ is in standard form is well-known; see  e.g., the discussion preceding \cite[Proposition 4.2.3]{rordam2002classification} or \cite[Theorem 2.3]{CuntzKTheory}. 
\end{proof}

It is shown in \cite[Proposition 5.7]{Goldbring2014} that a positive solution to the Kirchberg's Embedding Problem implies that $\op{Th}(\mc{O}_2)$ is not model complete. We should also remark that in the case of II$_1$ factors the only strongly self-absorbing algebra is 
the hyperfinite II$_1$ factor $R$ (\cite[Theorem 5.1]{Connes.Class}), and its theory is shown  (relying on \cite{Bro:Topological}) not to be model-complete in \cite{GoHaSi:Theory}. 


Having shown that many natural examples of \cstar-algebras do not have quantifier \emph{elimination}, we may ask whether they have quantifier \emph{reduction}, that is, whether it can be shown that every formula is equivalent to one with a fixed number of alternations of quantifiers.  For example, in the discrete setting Sela \cite{Sela} showed that in the theory of nonabelian free groups every formula is equivalent to a boolean combination of \allexist~formulas.

\begin{q}
Is there a natural example of a \cstar-algebra which admits quantifier reduction?
\end{q}

Given the primarily negative nature of our results, a natural question is to determine if there is a useful expansion of the language (and consequently of the theory) of \cstar-algebras in which wider classes of algebras do have quantifier elimination.  As we described in the introduction, for such an expansion to be useful we should add only a small number of symbols for natural predicates which are definable, but not quantifier-free definable, in the original language for \cstar-algebras. Also, it is necessary in this case to add axioms describing how the interpretation of these symbols should behave, to ensure that the new symbols are interpreted in the intended manner.  Changing the language in this way can change whether or not a class of structures has quantifier elimination, even if it does not change which structures are in the class.  A classical example of this from discrete logic is the theory of real closed fields (see \cite[Section 3.3]{Marker}).  There is a first-order theory in the language of fields whose models are precisely the real closed fields, but this theory does not have quantifier elimination.  Each real closed field admits a unique ordering making it an ordered field, and Tarski showed that the theory of real closed fields in the language of \emph{ordered} fields does have quantifier elimination.

\section{Appendix 
with  Diego Caudillo Amador,   
Bradd Hart,  Jamal Kawach, and Se-jin Kim}

We provide a first step of the proof (completed in  \cite[Corollary 3.4]{EagleGoldVig}) that the only theories of commutative \cstar-algebras that admit elimination 
of quantifiers are $\bbC$, $\bbC^2$ and $C(2^{\bbN})$, where~$2^{\bbN}$ denotes the Cantor space.
 In \cite[Theorem 5.26]{EagleVignati} it was proved that the latter algebra has quantifier elimination.
Since $2^{\bbN}$ is (up to homeomorphism) the unique zero-dimensional, compact metrizable space with no isolated points, 
$C(2^{\bbN})$ is the unique separable model of its theory. Therefore, if $X$ is any compact zero-dimensional 
space with no isolated points then $C(X)$ is elementarily equivalent to~$C(2^{\bbN})$.

\begin{lem} \label{P.A.1} 
Let $C(X)$ be an infinite-dimensional commutative \cstar-algebra that admits elimination of quantifiers.  Then either $X$ is connected, or $C(X)$ is elementarily equivalent to $C(2^\en)$.
\end{lem}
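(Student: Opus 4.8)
The plan is to argue by contradiction and use quantifier elimination to propagate local information about the topology of $X$. Suppose $C(X)$ admits elimination of quantifiers, is infinite-dimensional, and $X$ is not connected; I want to conclude that $X$ is zero-dimensional with no isolated points (the latter being automatic from infinite-dimensionality and quantifier elimination, since if $X$ had an isolated point then $C(X)$ would have a scalar projection, and the same argument as in Theorem~\ref{T.Finite}(3)$\Rightarrow$(1) — using that all nontrivial projections have the same type — forces every projection to be scalar, hence $X$ finite). So the real content is: $X$ disconnected plus quantifier elimination forces $X$ to be zero-dimensional.

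The key device is Lemma~\ref{L.SpectrumType}: for a single normal (here, self-adjoint) element $a\in C(X)$, the quantifier-free type of $a$ is determined by its spectrum $\sigma(a)=a(X)\subseteq\bbR$, and under quantifier elimination the full type of $a$ is then also determined by $\sigma(a)$. Now if $X$ is disconnected, write $X=U\sqcup V$ with $U,V$ nonempty clopen, so $C(X)$ has a nontrivial projection, i.e. some self-adjoint $a$ with $\sigma(a)=\{0,1\}$. I would argue that this rigidity must then hold for \emph{every} self-adjoint contraction: if $b\in C(X)$ is self-adjoint with $\sigma(b)$ a nontrivial interval (which happens precisely when some connected component of $X$ is nondegenerate, by Lemma~\ref{L.MASA}-type reasoning), then $b$ and a suitable translate/rescale of $b$ together with the projection $a$ can be compared — more precisely, one exhibits two self-adjoint contractions with the same spectrum but different types, contradicting the consequence of Lemma~\ref{L.SpectrumType}. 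The cleanest route: if $Y$ is a nondegenerate connected component of $X$, choose $f\in C(X)$ supported on $Y$ with $f(Y)=[0,1]$, and compare the type of $f$ with the type of a function $g$ that has the same range $[0,1]$ but whose "level set structure" interacts with the clopen decomposition differently — quantifier elimination would force an automorphism of $C(X)$ (equivalently a homeomorphism of $X$) carrying one to the other, which is impossible because one of them "sees" the clopen set $U$ and the other does not.

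Concretely, here is the step I expect to carry the weight. Take a nontrivial clopen $U\subsetneq X$ and the associated projection $p=\chi_U$. If some component of $X$ is nondegenerate, Lemma~\ref{L.MASA} (applied to the hereditary subalgebra $C(X)p$ or $C(X)(1-p)$, after noting it has no scalar projections) produces a positive contraction with full spectrum $[0,1]$ living entirely inside $U$, call it $a$, and likewise one inside $X\setminus U$, call it $b$; also produce a positive contraction $c$ of full spectrum that is \emph{not} supported in either piece (e.g. $c$ with $c\chi_U\neq 0\neq c\chi_{X\setminus U}$). Then $a$ and $c$ are both positive contractions with $\sigma=[0,1]$, so they have the same quantifier-free type, hence (by quantifier elimination) the same type; but $a$ satisfies $\inf_{\|q-q^2\|=0,\,q\neq 0,1}\|aq-a\|=0$ realized by $q=p$ (there is a nontrivial projection $q$ with $aq=a$), whereas no such nontrivial projection dominates $c$ — and "there exists a nontrivial projection $q$ with $aq=a$" is expressible by a formula (using that projections form a definable set), so this is a genuine difference of types, a contradiction. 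Therefore no component of $X$ is nondegenerate, i.e. $X$ is totally disconnected; being compact Hausdorff this gives zero-dimensional, and with no isolated points we get $X\cong 2^\bbN$, so $C(X)\equiv C(2^\bbN)$.

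\textbf{Main obstacle.} The delicate point is producing the "unconstrained" contraction $c$ and verifying that the distinguishing property — existence of a nontrivial projection $q$ with $cq=c$ (equivalently, the support of $c$ being clopen and proper) — is really captured by a single formula and genuinely fails for $c$ while holding for $a$; this requires choosing $c$ so that its cozero set is dense-but-not-clopen, which is where one uses that $X$ is not zero-dimensional (so $X\setminus U$ has a nondegenerate component and hence a continuous function onto $[0,1]$ whose cozero set is not clopen). Packaging all of this so that two elements with literally the same spectrum are produced, one on each side of the formula, is the heart of the argument; everything else is bookkeeping with the Gelfand correspondence and the definability facts already recorded before Lemma~\ref{L.SpectrumType}.
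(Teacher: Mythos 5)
Your overall architecture matches the paper's: rule out isolated points via scalar projections, use Lemma~\ref{L.SpectrumType} plus quantifier elimination to force two full-spectrum positive contractions to have the same type, and then exhibit a formula (an infimum over projections) that tells them apart. The genuine gap is in the last step: your distinguishing formula does not work. Written out, $\inf\{\|c-cq\| : q \text{ a nontrivial projection}\}$ equals $\inf\{\sup_{W} c : W \text{ a nonempty proper clopen set}\}$ (take $W=\{q=0\}$), so it vanishes as soon as, for every $\epsilon>0$, the nonempty open set $\{c<\epsilon\}$ contains a nonempty clopen set. Your condition that the cozero set of $c$ be dense controls only the exact equation $cq=c$, not the approximate one that the formula actually computes. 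Worse, there are spaces to which the lemma must apply on which \emph{no} choice of $c$ can save the formula: take $X=(\{0\}\times[0,1])\cup\bigcup_n(\{1/n\}\times C_n)$ with $C_n\subseteq[0,1]$ Cantor sets chosen $\epsilon_n$-dense, $\epsilon_n\to 0$. This $X$ is compact metrizable, disconnected, has no isolated points, and has the nondegenerate component $\{0\}\times[0,1]$; yet every nonempty open subset of $X$ contains a nonempty clopen set (it either meets some clopen Cantor slice $\{1/n\}\times C_n$, in which case it contains a clopen piece of that slice, or it lies in the segment, which has empty interior). Hence on this $X$ your formula is identically $0$ on every positive contraction with $0$ in its spectrum and distinguishes nothing. (Two smaller issues: quantifying over projections $q\neq 0,1$ is not quantification over a definable set as written, and your concluding step $X\cong 2^{\bbN}$ needs metrizability, i.e., the downward L\"owenheim--Skolem reduction to the separable case that the paper performs first.)

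The paper's proof gets around all of this with the symmetric formula $\psi(x)=\inf_{q=q^*=q^2}\max\{\|qx\|,\|(1-q)(1-x)\|\}$ and the element $h=\tfrac{1}{2}(f+p+g)$, where $f,g$ are full-spectrum positive contractions under $p=\chi_U$ and $1-p$ respectively. Then $h\geq\tfrac12$ on $U$ and $h\leq\tfrac12$ off $U$, so $\psi(h)=\tfrac12$, witnessed by $q=1-p$; the trivial projections automatically give value $1$ and need not be excluded. On the other side, if $f$ has full spectrum and takes the values $0$ and $1$ at two points $y,z$ of a connected set $Y$, then $\psi(f)<\tfrac23$ would produce a clopen set $B=q^{-1}(\{1\})$ with $y\in B$ and $z\notin B$, disconnecting $Y$. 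The moral is that the right invariant to extract from a projection is not ``does some clopen set dominate the support of $x$'' but ``can some clopen set approximately separate $\{x\approx 0\}$ from $\{x\approx 1\}$'': the latter is exactly what a clopen decomposition achieves and what a connected set carrying both values cannot tolerate, and it is robust under the approximation inherent in evaluating an $\inf$. To repair your argument you would need to replace your formula by one of this separating kind.
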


\begin{proof} 
Let $X$ be such that $C(X)$ is infinite-dimensional, $C(X)$ has quantifier elimination, and $X$ is not connected.  By the Downward L\"owenheim-Skolem Theorem (see \cite[Proposition 7.3]{BenYaacov2008a}) we may assume that $C(X)$ is separable, and therefore that $X$ is metrizable.  We begin by observing that $X$ does not have isolated points.  To see this, suppose that $a \in X$ is isolated, and let $p \in C(X)$ be the characteristic function of $\{a\}$.  Then $p$ is a scalar projection (as defined in Section \ref{section:findim}).  Since $C(X)$ has quantifier elimination it also satisfies property \ref{property:WeakerThanQE}, so $C(X)$ is an infinite-dimensional algebra with \ref{property:WeakerThanQE} and a scalar projection, contradicting Proposition \ref{P.abelian}.

Let $A \subseteq X$ be a nontrivial clopen set, and let $p$ be the characteristic function of $A$.  Then $p$ is a nontrivial projection in $C(X)$.  Since $X$ has no isolated points and $A$ is open in $X$ it follows that $A$ also has no isolated points, and hence that $C(A) \cong pC(X)p$ has no scalar projections.  Therefore by Lemma \ref{L.MASA} there is $f\in pC(X)p$ with $\sigma(f)=[0,1]$.  The same argument applied to $X \setminus A$ gives $g\in (1-p)C(X)(1-p)$ with $\sigma(g)=[0,1]$. Let $h=\frac{f+p+g}{2}$.  We have that $\norm{h}=1$ and $\sigma(h)=[0,1]$.  Consider the following formula:
\[\psi(x)=\inf_{q=q^*=q^2}\max\{\norm{qx}, \norm{(1-q)(1-x)}\}\]
The above expression is a bona fide formula since the quantification appearing in it is over a definable set - see \cite[Theorem~3.2.2]{Muenster}.
Our choice of $h$ implies that $\psi^{C(X)}(h) = \frac{1}{2}$.

The Cantor space is the unique compact totally disconnected metrizable space without isolated points, so to complete the proof it suffices to show that $X$ is totally disconnected.  Suppose to the contrary that $Y \subseteq X$ is a closed connected set with at least two distinct points $y, z \in Y$.  Let $f \in C(X)$ be such that $\sigma(f) = [0, 1]$, $f(y) = 0$, and $f(z) = 1$.  By Lemma \ref{L.SpectrumType} the functions $f$ and $h$ have the same quantifier-free type, and so since $C(X)$ has quantifier elimination $f$ and $h$ have the same type.  In particular, $\psi^{C(X)}(f) = \frac{1}{2}$.  We can therefore find a projection $q \in C(X)$ such that $\norm{qf}<\frac{2}{3}$ and $\norm{(1-q)(1-f)}<\frac{2}{3}$. Let $B=q^{-1}\left(\{1\}\right)$. If $z\in B$, then $q(z)f(z)=1$, contradicting $\norm{qf}<\frac{2}{3}$. Conversely if $y\notin B$ then $(1-q)(y)(1-f)(y)=1$, a contradiction to $\norm{(1-q)(1-f)}<\frac{2}{3}$. Thus $B\cap Y$ and $(X\setminus B)\cap Y$ disconnect $Y$.
\end{proof}

Recall that a \cstar-algebra is said to have \emph{real rank zero} if every self-adjoint element can be approximated by self-adjoint elements of finite spectrum, and that for $X$ a compact metrizable space the real rank of $C(X)$ coincides with the Lebesgue covering dimension of $X$ (see \cite{BrownPed91}).  It follows from Lemma \ref{P.A.1} and Theorem \ref{T.Finite} that the only theories of unital commutative real rank zero \cstar-algebras with quantifier elimination are the theories of $\mbb{C}, \mbb{C}^2$, and $C(2^\en)$.  

We now turn to the other side of the dichotomy in Lemma \ref{P.A.1}, and consider the case where $X$ is connected.  Recall that a connected compact Hausdorff space (i.e., a \emph{continuum}) is said to be \emph{indecomposable} if it is not the union of two of its proper subcontinua. This property is equivalent (see e.g., \cite[\S 48, Theorem 2]{Kurat}) to every connected open subset of $X$ being dense.

\begin{thm} \label{T.A.1} 
If $X$ is a continuum such that $C(X)$ has elimination of quantifiers then $X$ is indecomposable.
\end{thm}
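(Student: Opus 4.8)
The plan is to assume, toward a contradiction, that $X$ is a decomposable continuum with $C(X)$ admitting quantifier elimination, and to produce two self-adjoint contractions of $C(X)$ that have the same spectrum but different types. As in the proof of Lemma~\ref{P.A.1}, the downward L\"owenheim--Skolem theorem lets us assume $X$ is metrizable. Write $X=K_1\cup K_2$ with $K_1,K_2$ proper subcontinua. Then $W_i:=X\setminus K_{3-i}$ is a nonempty open subset of $X$ contained in $K_i$, and $L:=K_1\cap K_2$ is closed and is a proper subset of each $K_i$ (otherwise one $K_i$ contains the other and $X$ is one of them).

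First I would build the ``decomposable'' witness $f\in C(X)$: using the metric, set $f|_{K_1}=\tfrac12\bigl(1-\min(1,d(\cdot,L)/r_1)\bigr)$ and $f|_{K_2}=\tfrac12\bigl(1+\min(1,d(\cdot,L)/r_2)\bigr)$ for $r_i>0$ small enough that $d(\cdot,L)/r_i$ reaches $1$ on $K_i$. These agree (with value $\tfrac12$) on $L$, so $f$ is a well-defined continuous function with $\sigma(f)=f(K_1)\cup f(K_2)=[0,1]$, and $\{f\le\tfrac12\}=K_1$, $\{f\ge\tfrac12\}=K_2$; in particular both of these closed sublevel sets are \emph{connected}, and $\{f<\tfrac14\}$ is a nonempty subset of $K_1$. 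For the contrasting witness $g$ I would take any continuous surjection $p\colon X\to[0,1]$ (e.g.\ $p=d(\cdot,a)/\max_X d(\cdot,a)$ for a fixed $a\in X$) and put $g=1-|2p-1|$; then $\sigma(g)=[0,1]$ while $\{g\le\tfrac12\}=p^{-1}([0,\tfrac14])\sqcup p^{-1}([\tfrac34,1])$ is a disjoint union of two nonempty closed sets, hence \emph{disconnected}.

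The core of the argument is the formula
\[
\Phi(x)=\inf_{0\le u=u^*\le 1}\ \max\Bigl(\bigl\|\bigl((x-\tfrac12)_+-u(1-u)\bigr)_-\bigr\|,\ 1-\|u(1-4x)_+\|,\ 1-\|(1-u)(1-4x)_+\|\Bigr),
\]
which is a legitimate formula: the continuous functional calculus terms $(x-\tfrac12)_+$, $(1-4x)_+$ and the negative-part operation are uniform limits of $^*$-polynomials in a self-adjoint contraction, and the quantifier ranges over the definable set of positive contractions (cf.\ the remarks in Section~\ref{sec:Prelim}). I claim $\Phi^{C(X)}(g)=0$ but $\Phi^{C(X)}(f)\ge\tfrac14$. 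For $g$ the infimum is attained at $u=\phi(p)$ with $\phi(t)=\min(1,\max(0,2t-\tfrac12))$: one checks directly that $u(1-u)\le(g-\tfrac12)_+$ pointwise, while $u=1$ at a point where $p=1$ (so $g=0$) and $u=0$ at $a$ (so $g=0$), which forces $\|u(1-4g)_+\|=\|(1-u)(1-4g)_+\|=1$. For $f$, fix any positive contraction $u$ and let $\alpha=\bigl\|\bigl((f-\tfrac12)_+-u(1-u)\bigr)_-\bigr\|$; since $(f-\tfrac12)_+$ vanishes on $K_1$, we get $u(1-u)\le\alpha$ on $K_1$, so for $\alpha<\tfrac14$ the continuous function $u$ sends the \emph{connected} set $K_1$ into one of the two disjoint intervals $[0,s_\alpha]$, $[1-s_\alpha,1]$, where $s_\alpha=\tfrac12(1-\sqrt{1-4\alpha})<\tfrac12$. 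As $\{f<\tfrac14\}\subseteq K_1$, in the first case $\|u(1-4f)_+\|\le s_\alpha$ and in the second $\|(1-u)(1-4f)_+\|\le s_\alpha$, so the displayed maximum is $\ge 1-s_\alpha\ge\tfrac12$ when $\alpha<\tfrac14$, and $\ge\alpha\ge\tfrac14$ otherwise; taking the infimum over $u$ gives $\Phi^{C(X)}(f)\ge\tfrac14$.

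To conclude: $f$ and $g$ are commuting normal (indeed self-adjoint) elements of $C(X)$ with $j\sigma(f)=\sigma(f)=[0,1]=\sigma(g)=j\sigma(g)$, so Lemma~\ref{L.SpectrumType} gives $\qftp(f)=\qftp(g)$, and quantifier elimination then forces $\tp(f)=\tp(g)$, hence $\Phi^{C(X)}(f)=\Phi^{C(X)}(g)$ --- contradicting $\Phi^{C(X)}(f)\ge\tfrac14>0=\Phi^{C(X)}(g)$. Therefore $X$ is indecomposable. The steps I expect to need the most care are, on the logic side, confirming that $\Phi$ genuinely reduces to a formula in the language of unital \cstar-algebras (handling the functional-calculus terms as uniform limits of $^*$-polynomials and the definable-set quantifier), and, on the topological side, the construction of $f$ realizing the decomposition $X=K_1\cup K_2$ as the \emph{exact} sublevel sets $\{f\le\tfrac12\}$ and $\{f\ge\tfrac12\}$ (which is where metrizability enters); the remaining verifications are routine.
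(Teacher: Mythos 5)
Your argument is correct in substance and reaches the contradiction by the same overall mechanism as the paper --- two positive contractions of full spectrum (hence, by Lemma~\ref{L.SpectrumType}, of the same quantifier-free type) separated by a single formula exploiting connectedness --- but the witnesses and the formula are genuinely different. The paper takes as its ``generic'' element a \emph{volcano} function $f_1=g+h$ with $g,h$ orthogonal of full spectrum (always available), uses the decomposition only to build a \emph{peak} function $f_2$ whose superlevel set $\{f_2>4/5\}$ is connected, and separates them with the purely $^*$-polynomial formula $\inf_{c,d}\max(\|f-cc^*-dd^*\|,\dots,\|cc^*dd^*\|)$: connectedness of the peak forces any approximate orthogonal splitting $a+b$ to have $a=b$ large somewhere. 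You instead use the decomposition to make the \emph{sublevel} set $\{f\le\frac12\}=K_1$ connected, take a tent function $g=1-|2p-1|$ as the generic witness, and detect with $\Phi$ whether the sublevel region admits an approximately $\{0,1\}$-valued ``coloring'' $u$ realizing both colors near the minimum; connectedness of $K_1$ forces $u$ into one branch of $\{t:t(1-t)\le\alpha\}$ and kills one of the two norm conditions. Both routes work; the paper's formula is simpler to certify as a formula, which brings me to the two places where your writeup needs more care. First, $\Phi$ involves functional calculus, including a negative part applied to the non-commuting combination $(x-\frac12)_+-u(1-u)$; this is reparable --- e.g.\ pass to the triple $(f,(f-\frac12)_+,(1-4f)_+)$, whose quantifier-free type still matches that of the corresponding triple for $g$ by Lemma~\ref{L.SpectrumType}, and write $\|w_-\|=\inf_{c}\|w-c^*c\|$ --- but as written it is not yet an honest formula in the language. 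Second, your L\"owenheim--Skolem reduction is glibber than the one in Lemma~\ref{P.A.1}: there disconnectedness survives because a witnessing projection can be placed in the submodel, whereas here you must check that \emph{decomposability} passes to a separable elementary submodel $C(Y)\preceq C(X)$. It does --- put into the submodel Urysohn functions $a_j$ vanishing on $K_j$ and equal to $1$ at a point of $K_{3-j}\setminus K_j$; then the images $q(K_1),q(K_2)$ under the dual surjection $q\colon X\to Y$ are proper subcontinua covering $Y$ --- but this deserves a sentence (the paper's own proof also quietly uses a metric on $X$, so this is a shared, minor omission rather than a defect of your approach).
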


\begin{proof}We work by contradiction, and assume that $X$ is not indecomposable. For the purpose of the proof, we say that a function $f \in C(X)$ is a \emph{peak function} if $\sigma(f)=[0,1]$ and the set $\{x \in X: f(x)>4/5\}$ is connected.  Also for this proof we say that $f \in C(X)$ is a \emph{volcano function} if $\sigma(f)=[0,1]$ and $f=g+h$ for some $g$ and $h$ that satisfy $\sigma(g)=\sigma(h)=[0,1]$ and $gh=0$.  By using continuous functional calculus and Lemma~\ref{L.MASA} we see that every commutative \cstar-algebra with no scalar projections contains a volcano function, so let $f_1 \in C(X)$ be a volcano function. We shall construct a peak function $f_2$ and show that $f_1$ and $f_2$ have different types.  The desired contradiction is then obtained because Lemma~\ref{L.SpectrumType} implies that $f_1$ and $f_2$ have the same quantifier-free type.

Let $U$ be a connected open subset which is not dense in $X$, and fix $z\in X$ such that $\dist(z,U)>r>0$ for some $r$.  
With $F=X\setminus U$, the function 
$
h_0(x)=\dist(x,F)
$
is nonzero only on $U$. We normalize and let  $h=\|h_0\|^{-1} h_0$. 
The function $g(x)=r^{-1}\max(0,r-d(x,z))$ satisfies $\sigma(g)=[0,1]$ and $g$ is identically $0$ on $U$.
Let $f_2 = \frac{1}{5}h + \frac{4}{5}(1-g)$.  We claim that $f_2$ is a peak function.  We clearly have $\sigma(f_2) \subseteq [0, 1]$, and we also have $f_2(z) = 0$ and $f_2(x) = 1$ whenever $h(x) = 1$, so since $X$ is connected $\sigma(f_2) = [0, 1]$.  If $x \in U$ then $g(x) = 0$, so $f_2(x) > \frac{4}{5}$, while if $x \not\in U$ then $h(x) = 0$ so $f_2(x) \leq \frac{4}{5}$.  Therefore $\{x \in X : f_2(x) > \frac{4}{5}\} = U$, which is connected.

We now show  that $f_1$ and $f_2$ do not have the same type. 
Consider the formula
\[
\phi(f)=\inf_{c,d} \max(\|f-cc^*-dd^*\|, |1-\|c\||, |1-\|d\||, \|cc^*dd^*\|).
\]
Writing $f_1 = g_1 + h_1$ as in the definition of being a volcano function, and taking $c=g_1^{1/2}$ and $d=h_1^{1/2}$ we see that  $\phi^{C(X)}(f_1)=0$. 

Assume   $\phi^{C(X)}(f_2)<1/10$. Then there are  
$a=cc^*$ and $b=dd^*$ such that 
\[
\textstyle \max(\|f_2-a-b\|, |1-\|a\||, |1-\|b\||, \|ab\|)< \frac{1}{10}. 
\]
In particular there are $s,t\in X$ such that $a(s)>\frac{9}{10}$ and $b(t)>\frac{9}{10}$. 
Since $|f_2(x)-a(x)-b(x)|<\frac{1}{10}$ and $a(x),b(x)$ are positive for all $x\in X$, 
we have \[f_2(s)>\frac{4}{5}, \,\,f_2(t)>\frac{4}{5},\,\,a(t)<\frac{1}{5}\text{ and }b(s)<\frac{1}{5}.\] 
 Let $Z_1=\{x\in X\colon a(x)\leq b(x)\}$ and $Z_2=\{x\in X\colon b(x)\leq a(x)\}$. Then $U=\{x\in X\colon f_2(x)>\frac{4}{5}\}$ can be covered by $Z_1\cap U$ and $Z_2\cap U$. Since $U$ is connected, there is $x\in U\cap Z_1\cap Z_2$. For such an $x$ we have $a(x)=b(x)$.
 By calculation, from the fact that $x\in U$, $\|f_2-a-b\|<\frac{1}{10}$, and $\|ab\|<\frac{1}{10}$, we get that $a(x)=b(x)>\frac{7}{20}$ and $a(x)b(x)>\frac{1}{10}$. This violates our assumptions and completes the proof. 
\end{proof} 

\bibliographystyle{amsalpha}
\bibliography{qe-cstar}
\end{document}